\def\algbackskip{\hskip-\ALG@thistlm}
\newtheorem{theorem}{Theorem}[section]
\newtheorem{corollary}[theorem]{Corollary}
\newtheorem{lemma}[theorem]{Lemma}
\newtheorem*{theorem*}{Problem}
\theoremstyle{remark}
\newtheorem{remark}[theorem]{Remark}
\newcommand\keywords[1]{\textbf{Keywords}: #1}
\title{Policy Optimization in the Linear Quadratic Gaussian Problem: A Frequency Domain Perspective}
\date{\today}
\author{
Haoran Li\footnote{Department of Applied Mathematics, The Hong Kong Polytechnic University, China. Email: \url{25039255r@connect.polyu.hk}}\qquad 
Xun Li\footnote{Department of Applied Mathematics, The Hong Kong Polytechnic University, China. Email:  \url{li.xun@polyu.edu.hk.} This author is supported by RGC of Hong Kong grants 15226922 and 15225124, and partially from PolyU 4-ZZVB and 4-ZZP4.} \qquad 
Yuan-Hua Ni\footnote{College of Artificial Intelligence, Nankai University, Tianjin, China. Email:  \url{yhni@nankai.edu.cn}}\qquad
Xuebo Zhang\footnote{College of Artificial Intelligence, Nankai University, Tianjin, China. Email:  \url{zhangxuebo@nankai.edu.cn}}}
\begin{document}

\maketitle

\begin{abstract}
The Linear Quadratic Gaussian (LQG) problem is a classic and widely studied model in optimal control, providing a fundamental framework for designing controllers for linear systems subject to process and observation noises. In recent years, researchers have increasingly focused on directly parameterizing dynamic controllers and optimizing the LQG cost over the resulting parameterized set. However, this parameterization typically gives rise to a highly non-convex optimization landscape for the resulting parameterized LQG problem. To our knowledge, there is currently no general method for certifying the global optimality of candidate controller parameters in this setting. Moreover, most existing numerical methods lack rigorous guarantees of global convergence. In this work, we address these gaps with the following contributions. First, we derive a necessary and sufficient condition for the global optimality of stationary points in a parameterized LQG problems. This condition reduces the verification of optimality to a test of the controllability and observability for a novel, specially constructed transfer function, yielding a precise and computationally tractable certificate. Furthermore, our condition provides a rigorous explanation for why traditional parameterizations can lead to suboptimal stationary points. Second, we elevate the controller parameter space from conventional finite-dimensional settings to the infinite-dimensional $\mathcal{RH}_\infty$ space and develop a gradient-based algorithm in this setting, for which we provide a theoretical analysis establishing global convergence. Finally, representative numerical experiments validate the theoretical findings and demonstrate the practical viability of the proposed approach. Additionally, the appendix section explores a data-driven extension to the model-free setting, where we outline a parameter estimation scheme and demonstrate its practical viability through numerical simulation.
\end{abstract}
\keywords{Linear Quadratic Gaussian control, policy optimization, stationary points, linear fractional transformation}

\section{Introduction}

In recent years, both theoretical advances and the widespread adoption of policy optimization methods in reinforcement learning (RL)~\cite{Mnih:2015jgp,Silver:2016gnq,rajeswaran2018learningcomplexdexterousmanipulation} have motivated a growing interest in applying policy optimization approaches to classic optimal control problems. Among these control problems, linear quadratic (LQ) formulations—including the Linear Quadratic Regulator (LQR) and Linear Quadratic Gaussian (LQG)—have become principal subjects of theoretical investigation for policy optimization methods, due to their foundational importance in both theory and practice~\cite{Corriou2004, Bertsekas1995Dynamic, 1995Robust}.

For LQR problem, the explicit form and the global uniqueness of the optimal policy, as well as the problem's hidden convexity facilitate rigorous theoretical analysis of policy optimization methods~\cite{Fazel2018GlobalCO, Recht2018PG}. As a result, LQR problem serves as an important paradigm for studying the convergence and sample complexity of policy optimization methods. Furthermore, the analytical techniques and insights developed from policy optimization in LQR problem offer powerful tools and conceptual frameworks for more challenging problems, such as LQG problem and other areas in optimal and robust control~\cite{Hu2022TowardsAT,pmlr-v144-tang21a,doi:10.1137/140990309}. However, when moving from LQR problem to the more general LQG problem, which features partial state observation as well as process and measurement noises, most of the benign properties in LQR problem no longer hold. The optimization landscape of LQG problem is highly nonconvex and more difficult to characterize, making rigorous theoretical analysis of policy optimization methods much more challenging~\cite{zheng2023benignnonconvexlandscapesoptimal, pmlr-v144-tang21a}. In light of these differences, we briefly review key theoretical progress on policy optimization for both LQR and LQG problems, with emphasis on the distinctive analytical features and technical challenges of each case.

In recent years, applying policy optimization methods to LQR problem has drawn significant research interest, with a strong focus on rigorously establishing global convergence guarantees. Fatkhullin and Polyak (2020) \cite{Fatkhullin2020OptimizingSL} pioneer the analysis of gradient-based algorithm convergence rates for continuous-time and output feedback LQR, establishing linear convergence in these contexts. Fazel et al. (2018) \cite{Fazel2018GlobalCO} propose a model-free policy gradient approach for discrete-time LQR. Their work provides convergence guarantees for various policy gradient methods with different update rules (natural gradient descent, gradient descent, and Gauss-Newton step), as well as the sample complexity of gradient estimation via zeroth-order methods. Mohammadi et al. (2019) \cite{Mohammadi2019ConvergenceAS} extend the theoretical guarantees established by Fazel et al. (2018) to the continuous-time case.
The above studies systematically deepen our understanding of the optimization landscape for LQR problems. Furthermore, the analytical techniques and convergence tools developed in the policy optimization methods in LQR problem provide a crucial theoretical foundation and motivating methodology for using policy optimization methods to tackling more general LQ problems. Beyond standard LQR, a variety of works generalize analysis to LQR variants with additional constraints or more complex system structures, such as $\mathcal{H}_2$/$\mathcal{H}_\infty$ mixed objectives~\cite{Hu2022TowardsAT}, Markov jump systems~\cite{9779410}, risk constraints~\cite{10005813}, and finite horizon problems~\cite{doi:10.1137/20M1382386}. Collectively, these studies further enrich the theoretical understanding of policy optimization in LQ problems and show that many analytical tools from standard LQR problem can be adapted to the broader contexts.

Researchers begin exploring the applicability boundaries of policy optimization methods in LQG problem, building upon their successful application in solving LQR problem. However, the convergence guarantees for policy optimization methods in solving LQR problems cannot be easily extended the LQG setting. 	Compared with LQR,  LQG problem is subject to partial state observation as well as process and measurement noises, and its optimal solution is a dynamic regulator. These differences make the analysis of LQG problem more challenging than that of LQR. In response to these increasing complexities, prior work has primarily focused on two directions: the analysis of optimization landscape~\cite{pmlr-v144-tang21a,10124022,9993305} and convex reformulation techniques~\cite{Umenberger2022GloballyCP,zheng2023benignnonconvexlandscapesoptimal,zheng2024benignnonconvexlandscapesoptimal}.

Tang et al. (2021) \cite{pmlr-v144-tang21a} provide an analysis of LQG connectivity, demonstrating through convex parameterization and manifold theory that the solution space consists of two connected components, each containing global optima. The work also identifies differences between LQG and LQR problems: the stationary points of LQG problem include many saddle points, suggesting that gradient-based methods may converge to suboptimal stationary point, and the lack of coerciveness and inconsistent stability margins in the LQG cost functional can potentially lead to policy gradient method divergence. They further show that stationary points become optimal when the Lyapunov equations admit non-degenerate solutions.
Subsequent research further explores the optimality conditions of LQG problem's stationary points. Duan et al. (2024) \cite{10124022} extend the analysis of \cite{pmlr-v144-tang21a} to the Dynamical LQR (DLQR) problem, and their numerical experiments demonstrate that standard policy optimization methods may fail to converge to optimal policies under certain parameter settings. In the follow-up work \cite{9993305}, stochastic perturbations are introduced into the LQG setting to help policy optimization methods escape strict saddle points (i.e., stationary points with indefinite Hessians). Although this approach increases the probability of convergence, guarantees on the convergence rate of policy optimization methods are not provided.

Notably, the optimal dynamic filtering problem, as a simplified variant of LQG problem, gives rise to policy optimization methods with global convergence and sublinear convergence rates. Building on this problem, Umenberger et al. (2022) \cite{Umenberger2022GloballyCP} propose the Differential Convex Lifting (DCL) method, which extends earlier convex parameterization theory. Their approach embeds the original optimization variables into an extended space and constructs a diffeomorphic mapping between the extended problem and a convex formulation, guaranteeing that all the stationary points in the extended domain correspond to globally optimal solutions of optimal dynamic filtering problem. Building on this theory, they prove that the optimal dynamic filtering process satisfies the weak Polyak-Lojasiewicz (PL) condition and further propose a transformation technique to ensure the coerciveness of the cost functional. The main theoretical tools for this work are drawn from the broader field of robust optimal control. In this line of research, Scherer et al. \cite{599969} develop convex parameterization methods for LQG problem, which form the basis of DCL. However, the effectiveness of these approaches is often restricted by their heavy reliance on accurate system models, which limits their applicability in structured $\mathcal{H}_\infty$ synthesis \cite{1576856} and risk-sensitive scenarios~\cite{Zhang2021ProvablyEA}. These limitations, in turn, have motivated a recent surge of research on data-driven policy optimization methods. As a result, to address nonconvex problems such as LQG and dynamic filtering, researchers have begun to integrate policy optimization methods with convex parameterization theory to leverage the strengths of both methodologies.
Zheng et al. (2023,2024) \cite{zheng2023benignnonconvexlandscapesoptimal,zheng2024benignnonconvexlandscapesoptimal} subsequently generalize DCL to LQG and $\mathcal{H}_{\infty}$ controller optimization. Their work demonstrates that the LQG problem is equivalent to a convex optimization problem. However, the construction of this problem requires accurate system models and assumptions of invertibility for model parameters. Moreover, several works explore a novel parameterization method via input-output historical data for the discrete-time LQG problem \cite{sadamoto2024policy, fallah2025gradient}. These approaches formulate an objective function that satisfies the PL condition, thereby guaranteeing linear convergence for policy gradient methods. Nevertheless, extending this framework to continuous-time systems has yet to be addressed.

The preceding theory on LQG problem has largely been confined to time-domain analysis. To overcome the known limitations of classic approaches, we re-examine the problem from a modern, system-level perspective using frequency-domain tools and Linear Fractional Transformations (LFTs). This viewpoint allows us to move beyond treating the controller as a simple set of parameters and instead analyze its systemic interactions with the plant, and uncovers fundamental distinctions between LQR and LQG that explain long-standing challenges in the field. Our main contributions are as follows. 

\begin{itemize}
	\item \textbf{Theoretical Insight: A Necessary and Sufficient Condition for Global Optimality} 
	
\textit{The Gap}. It is widely observed that policy gradient methods for LQG problem often become trapped in suboptimal stationary points \cite{pmlr-v144-tang21a, 9993305}. This contrasts sharply with LQR problem, where direct parameterization of controller gain is known to yield an optimization landscape with a unique, globally optimal stationary point. This discrepancy raises an open question: why is direct controller parameterization so successful for LQR, but induces numerous suboptimal stationary points for the seemingly similar LQG problem? Meanwhile, the tools to certify that a point is globally optimal in LQG problem under direct parameterization are incomplete, as a complete characterization of its global optima (i.e., a necessary and sufficient condition) is still lacking  \cite{9993305, zheng2023benignnonconvexlandscapesoptimal, 599969}.

\textit{Our Contribution}. We resolve the two issues by establishing the necessary and sufficient condition for global optimality under direct parameterization. This provides a tractable criterion to certify whether any given controller is globally optimal for LQG problem. Leveraging this complete characterization, we reveal the root cause of suboptimal stationary points in LQG problem is that the direct parameterization acts as a projection, while preserving the unique minimum of the optimization landscape for LQR problem, destroys the benign structure of the objective functional of LQG problem. This provides a rigorous explanation for the pitfalls of applying direct parameterization to LQG control.
	
\item \textbf{Algorithmic Contribution: A Globally Convergent Gradient-Based Algorithm in $\mathcal{RH}_\infty$ Space} 

\textit{The Limitation of Existing Methods}. Current approaches that provide theoretical guarantees for LQG problem typically operate by transforming the controller search space. These established methods can be classified into two main categories:

\begin{itemize}
	\item[\textbf{1.}]  The first category of methods involves approximating the Youla parameterization with a {fixed, finite-order model}. This reduces the problem to a tractable, finite-dimensional convex program \cite{9031317}. However, selecting a controller order {a priori} introduces an inherent approximation error. Consequently, while the algorithm finds an optimal solution within the chosen subspace, this solution is for an approximated problem, creating a trade-off between controller complexity and suboptimality.
	\item[\textbf{2.}] The second category of methods reformulate LQG problem into a convex program (e.g., an semi-definite program), which can yield the exact global solution \cite{zheng2023benignnonconvexlandscapesoptimal}. The applicability of these methods, however, relies on several stringent prerequisites, such as the requirement of a full and precise model of system dynamics and prior knowledge of noise statistics.
\end{itemize}

\textit{Our Contribution}.
In this work, we develop a gradient-based algorithm that operates directly in the $\mathcal{RH}_\infty$ space, bypassing the need for initial truncation. Our algorithm performs iterative updates in the function space. By not confining the search to a fixed-order subspace, this approach avoids the intrinsic approximation error associated with truncation methods.

We provide the global convergence guarantee for the algorithm and characterize its convergence rate. The framework can also be extended to a model-free setting where it can learn from input/output data. 
\end{itemize}

This paper is organized as follows Section \ref{pf} introduces LQG problem. Section \ref{ana} analyzes the optimality condition for stationary points. Section \ref{diff} presents a comparative analysis between LQR and LQG problems, highlighting structural differences under output feedback constraints. Section \ref{alg}  develops a gradient-based optimization method with convergence analysis. Section \ref{num} validates the theoretical findings and demonstrate the practical viability of the proposed approach via some numerical experiments. Finally,  Appendix \ref{data} explores a potential extension of our algorithm to the model-free setting, outlining a parameter estimation approach whose effectiveness is also validated in Section \ref{num}.

\textbf{Notation.}\quad
Let $\mathbb{N} = \{0,1,2,3,\ldots\}$ denote the set of natural numbers, and $\mathbb{R}$ and $\mathbb{C}$ be the sets of real and complex numbers, respectively. For any positive integers $m$ and $n$, we denote by $\mathbb{R}^m$ the set of real $m$-dimensional column vectors, and by $\mathbb{R}^{m \times n}$ and $\mathbb{C}^{m\times n}$ the set of real and complex $m \times n$ matrices, respectively. We let $I_m$ denote the $m \times m$ identity matrix, $e_{m\times n}(i,j)$ the $m\times n$ matrix with $1$ the $(i,j)$ entry and $0$ elsewhere for $1\leq i\leq m$, $1\leq j\leq n$, and $0_{m \times n}$ the $m \times n$ zero matrix. Given integers $1 \leq i \leq m$ and $1 \leq j \leq n$, define
\[
\mathbb{R}^{m\times n \mid i\times j} = \left\{A \in \mathbb{R}^{m\times n} : A = \begin{bmatrix}
	0_{i\times j} & *\\
	* & *
\end{bmatrix} \right\}
\]
as the set of $m\times n$ real matrices whose top-left $i \times j$ block is zero. Define the operator
\[
\mathbf{e}_{m\times n}(\cdot, i, j): \mathbb{R}^{m\times n} \to \mathbb{R}^{m\times n\mid i\times j}
\]
which, for any $A \in \mathbb{R}^{m \times n}$, sets the entries in the top-left $i \times j$ block of $A$ to zero, and leaves all other entries unchanged.
For any matrix $A\in\mathbb{C}^{m\times n}$, $A^\top$, $A^*$, $\|A\|$, $\|A\|_F$ denote its transpose, its conjugate transpose, its spectral (operator) norm, and its Frobenius norm, respectively. The smallest and largest singular values of $A$ are denoted as $\sigma_{\min}(A)$ and $\sigma_{\max}(A)$, respectively. If $A$ is square and invertible, $A^{-1}$ denotes its inverse.
% Note: Make sure you have \usepackage{amsmath} and \usepackage{amssymb} 
% (or amsfonts) in your document preamble for this code to work correctly.

Consider a continuous-time, linear time-invariant (LTI) system:
\begin{equation}
	\begin{aligned}
		\dot{x}(t) &= A x(t) + B u(t),\\ 
		y(t) &= C x(t) + D u(t),
	\end{aligned}
\end{equation}
where $A$, $B$, $C$, and $D$ are real matrices with compatible dimensions. The corresponding transfer function matrix is
\[
G(s) = C(sI - A)^{-1}B + D,
\]
where $s \in \mathbb{C}$ is the Laplace variable. For notational convenience, we also represent this transfer function using the compact state-space form:
\begin{equation}
	\left[\begin{array}{c|c}
		A& B\\
		\hline
		C& D
	\end{array}\right].
\end{equation}
For a stable (i.e., all its poles have negative real parts), proper (i.e.,~$\lim_{s\to\infty} G(s)$ is a finite constant matrix; strictly proper, i.e.,~$\lim_{s\to\infty} G(s) = 0$) transfer function matrix $G(s)$, the $\mathcal{H}_2$ and $\mathcal{H}_\infty$ norms, denoted as $\|G\|_{\mathcal{H}_2}:= \int_{-\infty}^{\infty} \operatorname{tr}\big(G^*(j\omega) G(j\omega)\big)\, \frac{d\omega}{2\pi}$ and $\|G\|_{\mathcal{H}_\infty}:=\sup_{0\leq\omega< \infty}\sigma_{\max}(G(j\omega))$, are defined in~\cite{1995Robust}.
Let $\mathcal{RH}_\infty$ (resp., $\mathcal{RH}_{\infty,0}$) denote the set of all the proper (resp., strictly proper), stable, rational ((i.e., each of its entries is a ratio of polynomials in $s\in\mathbb{C}$) transfer function matrices. For $G, H \in \mathcal{RH}_\infty$ with the same input and output dimensions, their $\mathcal{H}_2$ inner product is defined as
\[
\langle G, H \rangle_{\mathcal{H}_2} = \int_{-\infty}^{\infty} \operatorname{tr}\big(G^*(j\omega) H(j\omega)\big)\, \frac{d\omega}{2\pi}.
\]
This definition follows \cite{1995Robust}.
 The mixed second-order directional derivative $D_{\bm{u}} D_{\bm{v}} f(\bm{x})$ is defined as
\begin{equation}
	D_{\bm{u}} D_{\bm{v}} f(\bm{x}) := \sum_{i,j=1}^{n} \frac{\partial^2 f}{\partial x_i \partial x_j}(\bm{x}) \, u_i v_j
\end{equation}
with any second order differentiable function $f: \mathbb{R}^n \to \mathbb{R}$ and any vectors  $\bm{u}:=(u_1,\dots,u_n), \bm{v}=(v_1,\dots,v_n) \in \mathbb{R}^n$,

% Uncomment below if sequence notation is needed:
% A nonnegative sequence $\{a_t\}$ is called summable if $\sum_{t=0}^\infty a_t < \infty$, and square-summable if $\sum_{t=0}^\infty a_t^2 < \infty$.

\section{Problem formulation}\label{pf}

In this section, we formulate LQG problem as an optimization problem over a class of parameterized control policies.
Specifically, the continuous-time LQG problem \cite{Tang2023} is
\begin{equation}
	\begin{aligned}
		&\underset{u_t}{\text{min}} \lim_{T \to \infty} \frac{1}{T} \mathbb{E} \left[ \int_0^T \left( x_t^\top Q x_t + u_t^\top R u_t \right) dt \right], \\
		&~\text{s.t.} \quad 
		\left\{
		\begin{array}{l}
			\dot{x}_t = A x_t + B u_t + \omega_t, \\
			y_t = C x_t + v_t,
		\end{array}
		\right.
	\end{aligned}
	\label{eq_lqg_in}
\end{equation}
where \(A \in \mathbb{R}^{n \times n}\), \(B \in \mathbb{R}^{n \times m_1}\), and \(C \in \mathbb{R}^{m_2 \times n}\) are system matrices. The input $u_t$ is allowed to depend on all the past observation $y_\tau$, $\tau\leq t$. The noise processes $\{\omega_t\}$ and $\{v_t\}$ are mutually independent Gaussian white noise processes with intensity matrices $\mathcal{W} \succ 0$ and $\mathcal{V} \succ 0$, respectively. The weighting matrices \(Q \succ 0\) and \(R \succ 0\) define the quadratic performance index. A well-known optimal controller of \eqref{eq_lqg_in} is constructed based on the solutions to the following Riccati equations
\begin{equation}
	\begin{aligned}
		&A^\top P + P A - P B R^{-1} B^\top P + Q = 0,\quad K = R^{-1} B^\top P, \\
		&A H + H A^\top - H C^\top \mathcal{V}^{-1} C H + \mathcal{W} = 0,\quad L = H C^\top \mathcal{V}^{-1}. 
	\end{aligned}
	\label{eq_riccati}
\end{equation}
It should be noted that under the stabilizability and detectability assumptions each of~\eqref{eq_riccati} admits a unique positive definite solution. The optimal feedback gain $K$ and the estimator gain $L$ are subsequently used to construct the optimal controller:
\begin{equation}
	\begin{aligned}
		&\dot{\hat{x}}_t = (A - B K - L C) \hat{x}_t + L y_t, \\
		&u_t = -K \hat{x}_t.
	\end{aligned}
	\label{opt_lqg}
\end{equation}
The optimality of controller in \eqref{opt_lqg} follows from the known separation principle~\cite{1995Robust}.

While LQG problem admits the explicit optimal solution, real-world scenarios frequently involve model uncertainty or additional constraints that preclude closed-form solutions. To address these challenges, it has become common practice to parameterize controllers and synthesize them via policy optimization, enabling data-driven and gradient-based approaches. Motivated by these considerations, recent literature on LQG controller synthesis and policy optimization~\cite{zheng2023benignnonconvexlandscapesoptimal,9993305} investigates the set of all the controllers admitting the following parameterized state-space realization:
\begin{equation}
	\left\{
	\begin{array}{l}
		\dot{\hat{x}}_t = A_{\mathcal{K}} \hat{x}_t + B_{\mathcal{K}} y_t, \\
		u_t = C_{\mathcal{K}} \hat{x}_t,
	\end{array}
	\right.
	\quad \text{with} \quad
	A_{\mathcal{K}} \in \mathbb{R}^{q \times q}, \,
	B_{\mathcal{K}} \in \mathbb{R}^{q \times m_2}, \,
	C_{\mathcal{K}} \in \mathbb{R}^{m_1 \times q},
	\label{eq_parameterization}
\end{equation}
where $q$ is chosen so that this class of parameterizations encompasses the optimal LQG controller~\eqref{opt_lqg}.

Building on this parameterization, the previous works \cite{zheng2023benignnonconvexlandscapesoptimal,9993305} have proposed the following policy optimization problem over parameterized dynamic controllers \eqref{eq_parameterization} for the LQG setting
\begin{align}
	\begin{aligned}
		&\underset{\mathcal{K} \in \mathbb{K}_q}{\text{min}}\quad J(\mathcal{K}) := \lim_{T \to \infty} \frac{1}{T} \mathbb{E} \left[ \int_0^T \left( x_t^\top Q x_t + u_t^\top R u_t \right) dt \right] \\
		&~\text{s.t.} \quad
		\begin{cases}
			\dot{x}_t = A x_t + B u_t + \omega_t, \\
			\dot{\hat{x}}_t = A_{\mathcal{K}} \hat{x}_t + B_{\mathcal{K}} y_t, \\
			y_t = C x_t + v_t, \\
			u_t = C_{\mathcal{K}} \hat{x}_t,
		\end{cases}
	\end{aligned}
	\label{op_lqg}
\end{align}
where 
\begin{equation}\label{cons_k}
	\mathcal{K} = \begin{bmatrix}
		0 & C_{\mathcal{K}} \\
		B_{\mathcal{K}} & A_{\mathcal{K}}
	\end{bmatrix} \in \mathbb{R}^{(q+m_1)\times (q+m_2)}
\end{equation}
parameterizes the dynamic controller, and $\mathbb{K}_q$ denotes the set of all such $\mathcal{K}$ that stabilize the corresponding closed-loop system~\eqref{op_lqg}, thus ensuring the problem is well-posed. This parameterization is chosen so that the class $\mathbb{K}_q$ contains, in particular, the optimal LQG controller.

\section{The optimal condition of problem \eqref{op_lqg}}\label{ana}

In this section, we first introduce some basic properties of problem \eqref{op_lqg}, and then establish necessary and sufficient condition for a feasible point of problem \eqref{op_lqg} to be globally optimal.
\subsection{The properties of problem \eqref{op_lqg}}
To facilitate analysis and computation, it is convenient to represent the closed-loop system \eqref{op_lqg} using an augmented state variable $\xi_t = [x_t^\top \ \hat{x}_t^\top ]^\top$. With this augmented state variable, the closed-loop system dynamics can be expressed as
\begin{equation}
	\dot{\xi}_t =
	\underbrace{
		\left[\begin{array}{cc}
			A & B C_{\mathcal{K}} \\
			B_{\mathcal{K}} C & A_{\mathcal{K}}
		\end{array}\right]
	}_{\mathcal{A}}
	\xi_t +
	\left[\begin{array}{cc}
		I & 0 \\
		0 & B_{\mathcal{K}}
	\end{array}\right]
	\begin{bmatrix}
		\omega_t \\
		v_t
	\end{bmatrix}.
	\label{eq_augmented_dynamics}
\end{equation}
Thus for any $\mathcal{K}\in\mathbb{K}_q$, the performance index in \eqref{op_lqg} admits two equivalent representations in terms of solutions to two Lyapunov equations:
\begin{equation}
	\begin{aligned}
		J(\mathcal{K}) &= \operatorname{tr}\left(
		\begin{bmatrix}
			\mathcal{W} & 0 \\
			0 & B_{\mathcal{K}} \mathcal{V} B_{\mathcal{K}}^\top
		\end{bmatrix}
		\mathcal{P}_{\mathcal{K}}
		\right) \\
		& = \operatorname{tr}\left(
		\begin{bmatrix}
			Q & 0 \\
			0 & C_{\mathcal{K}}^\top R C_{\mathcal{K}}
		\end{bmatrix}
		\Sigma_{\mathcal{K}}
		\right),
	\end{aligned}
	\label{eq_cost_lyap}
\end{equation}
where
\[
\mathcal{P}_{\mathcal{K}} =
\begin{bmatrix}
	P^\mathcal{K}_{11} & P^\mathcal{K}_{12} \\
	P^{\mathcal{K}}_{21} & P^\mathcal{K}_{22}
\end{bmatrix},\qquad
\Sigma_{\mathcal{K}} =
\begin{bmatrix}
	\Sigma^\mathcal{K}_{11} & \Sigma^\mathcal{K}_{12} \\
	\Sigma^\mathcal{K}_{21} & \Sigma^\mathcal{K}_{22}
\end{bmatrix}
\]
solve the Lyapunov equations
\begin{equation}
	\begin{aligned}
			\mathcal{A}^\top \mathcal{P}_{\mathcal{K}} + \mathcal{P}_{\mathcal{K}} \mathcal{A} &=
		-\begin{bmatrix}
			Q & 0 \\
			0 & C_{\mathcal{K}}^\top R C_{\mathcal{K}}
		\end{bmatrix}, \\
		\mathcal{A} \Sigma_{\mathcal{K}} + \Sigma_{\mathcal{K}} \mathcal{A}^\top &=
		-\begin{bmatrix}
			\mathcal{W} & 0 \\
			0 & B_{\mathcal{K}} \mathcal{V} B_{\mathcal{K}}^\top
		\end{bmatrix},
		\label{eq_lyap2}
	\end{aligned}
\end{equation}
and, for clarity, the block matrices have the following dimensions:
\[
P^\mathcal{K}_{11},\, \Sigma^\mathcal{K}_{11} \in \mathbb{R}^{n \times n}, 
\quad P^\mathcal{K}_{12},\, \Sigma^\mathcal{K}_{12} \in \mathbb{R}^{n \times q}, 
\quad P^\mathcal{K}_{22},\, \Sigma^\mathcal{K}_{22} \in \mathbb{R}^{q \times q}.
\]
Thus, the search for optimal controllers reduces to identifying the minimizer of $J(\mathcal{K})$ over $\mathbb{K}_q$. For simplicity, we let $\mathcal{K}^\star \in \mathbb{K}_q$ as a stationary point, if the gradient of  $J(\mathcal{K}^\star)$ at $\mathcal{K}^\star$ satisfies
\[
\nabla J(\mathcal{K}^\star)  = 0.
\]

% --- Recommended Revised Version ---

% --- Recommended Version (Polished based on your draft) ---

% --- Final Recommended Version for your section ---

% --- Final Polished Version with Section Indices ---

\noindent While policy gradient and related direct optimization methods have been widely applied to problem~\eqref{op_lqg} \cite{pmlr-v144-tang21a,9993305}, they often struggle with convergence and become trapped in suboptimal stationary points. This issue stems in part from a critical theoretical gap: no definitive conditions exist to verify the global optimality of a stationary point. Most existing results offer only one-sided conditions—that is, necessary or sufficient, but not both—leaving the status of such points largely uncharacterized.

 Our work systematically addresses these challenges through three interconnected contributions. 
{First}, in Section~\ref{ana}, we {establish a necessary and sufficient condition} for a point to be a global optimum of problem~\eqref{op_lqg}. This rigorous criterion provides the first definitive tool to verify global optimality. 
{Second}, leveraging this characterization, our analysis in Section~\ref{diff} reveals the root cause of suboptimal stationary points in the optimization landscape of problem~\eqref{op_lqg}. 
{Finally}, armed with this deep theoretical understanding, we introduce in Section~\ref{alg} {a novel gradient-based algorithm to solve problem~\eqref{op_lqg}}. By operating within a parameterization that ensures a broad class of admissible controllers, our algorithm is specifically designed to circumvent the identified landscape challenges and is proven to converge to the optimal value.

%（如需要可在结尾补一句“Such a formulation serves as the foundation for a wide range of gradient-based controller synthesis algorithms studied in recent literature.”用于承接下文。）

\subsection{Optimal condition analysis} 

The challenge of certifying optimality for stationary points in problem~\eqref{op_lqg} has spurred significant recent research. A notable line of work involves leveraging {convex parameterization techniques} to reveal hidden convex structures within the problem \cite{zheng2023benignnonconvexlandscapesoptimal,zheng2024benignnonconvexlandscapesoptimal}. These developments have deepened on the understanding of optimization landscape and, crucially, enabled the certification of global optimality for certain stationary points under specific conditions.

\begin{lemma}\cite{zheng2024benignnonconvexlandscapesoptimal}\label{lem_conv}
	If $q=n$ and $\mathcal{K}^*$ is a stationary point of problem \eqref{op_lqg} that satisfies
	\begin{equation}
		\begin{aligned}\label{eq_con_ve}
			\mathcal{P}_{\mathcal{K}^*} =
			\begin{bmatrix}
				P^{\mathcal{K}^*}_{11} & P^{\mathcal{K}^*}_{12} \\
				P^{\mathcal{K}^*}_{21} & P^{\mathcal{K}^*}_{22}
			\end{bmatrix}\succ 0, \quad \det|P^{\mathcal{K}^*}_{12}|\neq 0,
		\end{aligned}
	\end{equation}
	or
	\begin{equation}\label{eq_con_ve2}
		\Sigma_{\mathcal{K}} =
		\begin{bmatrix}
			\Sigma^{\mathcal{K}^*}_{11} & \Sigma^{\mathcal{K}^*}_{12} \\
			\Sigma^{\mathcal{K}^*}_{21} & \Sigma^{\mathcal{K}^*}_{22}
		\end{bmatrix}\succ 0, \quad \det|\Sigma^{\mathcal{K}^*}_{12}|\neq 0,
	\end{equation}
	then $\mathcal{K}^*$ is an optimal solution to problem \eqref{op_lqg}.
\end{lemma}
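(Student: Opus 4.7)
The plan is to exploit a classical convex reformulation of the LQG problem, going back to Scherer et al.~\cite{599969} and revisited recently in \cite{zheng2024benignnonconvexlandscapesoptimal}: when $q=n$, there is a nonlinear but bi-rational change of variables that sends the parameterized problem \eqref{op_lqg}, together with its Lyapunov certificate $\mathcal{P}_{\mathcal{K}}$, to a linearly constrained convex program. Conditions \eqref{eq_con_ve} and \eqref{eq_con_ve2} are precisely what is required for this change of variables to be a local diffeomorphism at $\mathcal{K}^*$; stationarity of $J$ at $\mathcal{K}^*$ then transfers to stationarity of the convex surrogate at the image point, and convexity upgrades stationarity to global optimality. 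Pulling back through the diffeomorphism yields the claim. I will detail the case \eqref{eq_con_ve}; the case \eqref{eq_con_ve2} is symmetric, with $\Sigma_{\mathcal{K}^*}$ playing the role of $\mathcal{P}_{\mathcal{K}^*}$ and the dual set of Scherer variables being used.

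First, I would introduce the lifted variables. Writing $\mathcal{P}_{\mathcal{K}^*}$ and $\mathcal{P}_{\mathcal{K}^*}^{-1}$ in block form, set $X := P_{11}^{\mathcal{K}^*}$ and let $Y$ be the $(1,1)$-block of $\mathcal{P}_{\mathcal{K}^*}^{-1}$. Define $(\widehat{A},\widehat{B},\widehat{C})$ as the standard rational expressions in $(A_{\mathcal{K}^*},B_{\mathcal{K}^*},C_{\mathcal{K}^*})$ involving $P_{12}^{\mathcal{K}^*}$ and the $(1,2)$-block of $\mathcal{P}_{\mathcal{K}^*}^{-1}$. The assumption $\mathcal{P}_{\mathcal{K}^*}\succ 0$ forces $X\succ 0$, $Y\succ 0$, and $XY-I$ nonsingular, while $\det P_{12}^{\mathcal{K}^*}\neq 0$ guarantees that the lifting map $\Phi:\mathcal{K}\mapsto (X,Y,\widehat{A},\widehat{B},\widehat{C})$ is well-defined and has invertible Jacobian at $\mathcal{K}^*$. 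Substituting these variables into the Lyapunov equation \eqref{eq_lyap2} and the cost representation \eqref{eq_cost_lyap} yields an equivalent formulation whose admissible set is defined by linear matrix inequalities and whose lifted cost $\widetilde J$ is jointly convex in $(X,Y,\widehat{A},\widehat{B},\widehat{C})$.

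Next, because $\Phi$ is a local diffeomorphism at $\mathcal{K}^*$, its differential is a linear isomorphism there, so
\[
\nabla J(\mathcal{K}^*) = 0 \ \Longrightarrow\ \nabla \widetilde J\bigl(\Phi(\mathcal{K}^*)\bigr) = 0.
\]
Since $\widetilde J$ is convex on its admissible set, every stationary point is a global minimizer, giving $\widetilde J(\Phi(\mathcal{K}^*)) = \min \widetilde J$. Inverting the lifting then yields $J(\mathcal{K}^*) = \min_{\mathcal{K}\in\mathbb{K}_q} J(\mathcal{K})$, establishing global optimality.

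The main obstacle is the construction in the first step together with the verification that (i) $\Phi$ is genuinely a diffeomorphism under the stated non-degeneracy, (ii) the induced constraints on $(X,Y,\widehat{A},\widehat{B},\widehat{C})$ are convex Scherer-type LMIs, and (iii) the substituted cost is convex. All three points hinge on the invertibility of $P_{12}^{\mathcal{K}^*}$ together with the positive-definiteness of $\mathcal{P}_{\mathcal{K}^*}$; checking that the Jacobian of $\Phi$ at $\mathcal{K}^*$ is non-singular precisely under \eqref{eq_con_ve} is the technical heart of the argument, and is the place where the hypothesis $q=n$ is essential, since it makes $P_{12}^{\mathcal{K}^*}$ square and hence allows the determinant condition to encode full invertibility.
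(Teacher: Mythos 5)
The paper does not prove this lemma at all: it is imported verbatim from \cite{zheng2024benignnonconvexlandscapesoptimal} as a known result, so there is no in-paper proof to compare against. Judged against the argument in the cited source, your outline has the right overall shape (Scherer-type change of variables, stationarity transfer, convexity of the surrogate, pull-back), but it contains a genuine gap at the step you yourself identify as the technical heart. The map $\Phi:\mathcal{K}\mapsto(X,Y,\widehat{A},\widehat{B},\widehat{C})$ cannot be a local diffeomorphism with invertible Jacobian: for $q=n$ the controller space has dimension $n^2+nm_2+m_1n$, while the Scherer variable space has dimension $3n^2+nm_2+m_1n$, so the differential of $\Phi$ is at best injective, never surjective. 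But the implication you need, $\nabla J(\mathcal{K}^*)=0\Rightarrow\nabla\widetilde{J}(\Phi(\mathcal{K}^*))=0$, follows from $\nabla J=D\Phi^{*}\nabla\widetilde{J}$ only when $D\Phi^{*}$ is injective, i.e.\ when $D\Phi$ is \emph{surjective}. With the map as you define it, $\nabla\widetilde{J}$ could be a nonzero element of the (large) cokernel of $D\Phi$ and your argument would not detect it. This is precisely why \cite{Umenberger2022GloballyCP,zheng2024benignnonconvexlandscapesoptimal} work with an \emph{extended} lifting: the domain is enlarged to pairs $(\mathcal{K},\mathcal{P})$ with $\mathcal{P}$ ranging over Lyapunov certificates (and the similarity-transformation freedom is quotiented out), so that the lifted map becomes a genuine diffeomorphism onto the convex domain and the adjoint argument closes. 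The hypotheses $\mathcal{P}_{\mathcal{K}^*}\succ0$ and $\det P_{12}^{\mathcal{K}^*}\neq0$ enter exactly to guarantee that $\mathcal{K}^*$ lies in the region where this extended map is well defined and its differential surjective, not to make your $\Phi$ a diffeomorphism.

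Two smaller points also need attention before the pull-back step is airtight. First, you must verify that the lift is cost-preserving, $\widetilde{J}(\Phi(\mathcal{K}))=J(\mathcal{K})$, which holds only if $(X,Y)$ are taken to be the actual Lyapunov solutions rather than free LMI variables; in the convex program they are decision variables, so one needs the standard argument that the minimum over $(X,Y)$ is attained at the Lyapunov solution. Second, $\Phi$ is only defined on the open set where $\det P_{12}^{\mathcal{K}}\neq0$, so to conclude $J(\mathcal{K}^*)\leq J(\mathcal{K})$ for \emph{all} $\mathcal{K}\in\mathbb{K}_n$ (including degenerate ones) you need a density-and-continuity argument or the fact that the image of the nondegenerate set already covers enough of the convex feasible set to realize its infimum. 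Neither issue is fatal, but both are absent from the sketch.
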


% --- Recommended Merged and Polished Version ---

\noindent However, the sufficient conditions provided by Lemma~\ref{lem_conv} and related results \cite{zheng2023benignnonconvexlandscapesoptimal,zheng2024benignnonconvexlandscapesoptimal} are applicable only to a specific subset of controllers in $\mathbb{K}_q$ that satisfy the structural constraints \eqref{eq_con_ve} or \eqref{eq_con_ve2}. Consequently, for any stationary point corresponding to a controller that fails to meet these strict requirements, existing convex reformulation techniques cannot determine whether it is globally optimal.

To overcome this fundamental limitation and establish a truly general criterion, our work pivots to a different approach founded on frequency-domain analysis and linear fractional transformations (LFTs). As a foundational step for this analysis, we first reformulate problem~\eqref{op_lqg} as an equivalent~$\mathcal{H}_2$ optimization problem. This reformulation provides the necessary framework to derive our necessary and sufficient condition without relying on the restrictive structural assumptions of prior works. Let
\begin{equation}
	\begin{aligned}
		& \underset{\mathcal{K}\in\mathbb{K}_q}{\text{min}} \; && \|G_{\mathcal{K}}\|_{\mathcal{H}_2}^2 \\
		&~~\text{s.t.} \; && G_{\mathcal{K}}(s) = \mathcal{C}(sI - \mathcal{A})^{-1}\mathcal{B} 
	\end{aligned}
	\label{eq:h2_problem}
\end{equation}
with
\begin{equation}
	\mathcal{A} = \begin{bmatrix} A & BC_\mathcal{K} \\ B_\mathcal{K}C & A_\mathcal{K} \end{bmatrix}, 
	\mathcal{C} = \begin{bmatrix} Q^{\frac{1}{2}} & 0 \\ 0 & R^{\frac{1}{2}}C_\mathcal{K} \end{bmatrix}, 
	\mathcal{B} = \begin{bmatrix} \mathcal{W}^{\frac{1}{2}} & 0 \\ 0 & B_\mathcal{K}\mathcal{V}^{\frac{1}{2}} \end{bmatrix}.
\end{equation}
\begin{lemma}
	\label{th1}
	The policy optimization problem \eqref{op_lqg} is equivalent to the $\mathcal{H}_2$ norm optimization problem \eqref{eq:h2_problem} in the sense that both problems have the same feasible set, and the two cost functionals are equal for any  $\mathcal{K}\in\mathbb{K}_q$.
\end{lemma}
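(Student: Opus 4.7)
The plan is to prove both parts (feasibility equivalence and cost equivalence) via the standard state-space formula for the $\mathcal{H}_2$ norm in terms of the controllability Gramian, and then match the resulting Lyapunov equation against \eqref{eq_lyap2}.

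First, I would verify that both problems have the same feasible set. For the $\mathcal{H}_2$ problem, $\|G_\mathcal{K}\|_{\mathcal{H}_2}^2<\infty$ requires $G_\mathcal{K}\in\mathcal{RH}_{\infty,0}$, which holds iff the state matrix $\mathcal{A}$ of the closed loop \eqref{eq_augmented_dynamics} is Hurwitz; this is precisely the defining condition for $\mathcal{K}\in\mathbb{K}_q$. Hence both problems range over the same index set $\mathbb{K}_q$.

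Second, I would establish the cost equality. For any $\mathcal{K}\in\mathbb{K}_q$, the state-space formula (see, e.g., \cite{1995Robust}) gives
\begin{equation*}
\|G_\mathcal{K}\|_{\mathcal{H}_2}^2 \;=\; \operatorname{tr}\!\paren{\mathcal{C}\, X\, \mathcal{C}^\top},
\end{equation*}
where $X=\int_0^\infty e^{\mathcal{A}t}\mathcal{B}\mathcal{B}^\top e^{\mathcal{A}^\top t}\,dt$ is the unique positive semidefinite solution of the Lyapunov equation $\mathcal{A}X + X\mathcal{A}^\top + \mathcal{B}\mathcal{B}^\top = 0$. The key algebraic observation is that
\begin{equation*}
\mathcal{B}\mathcal{B}^\top = \begin{bmatrix} \mathcal{W} & 0 \\ 0 & B_\mathcal{K}\mathcal{V}B_\mathcal{K}^\top \end{bmatrix},\qquad
\mathcal{C}^\top\mathcal{C} = \begin{bmatrix} Q & 0 \\ 0 & C_\mathcal{K}^\top R C_\mathcal{K} \end{bmatrix},
\end{equation*}
so $X$ satisfies exactly the same Lyapunov equation as $\Sigma_\mathcal{K}$ in \eqref{eq_lyap2}. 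Since $\mathcal{A}$ is Hurwitz, the solution is unique, giving $X=\Sigma_\mathcal{K}$. Using the cyclic property of the trace and the second representation in \eqref{eq_cost_lyap},
\begin{equation*}
\|G_\mathcal{K}\|_{\mathcal{H}_2}^2 = \operatorname{tr}\!\paren{\mathcal{C}^\top\mathcal{C}\,\Sigma_\mathcal{K}} = \operatorname{tr}\!\left(\begin{bmatrix} Q & 0 \\ 0 & C_\mathcal{K}^\top R C_\mathcal{K} \end{bmatrix}\Sigma_\mathcal{K}\right) = J(\mathcal{K}).
\end{equation*}

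There is no serious obstacle here, since both identities are classical; the proof is essentially bookkeeping. The one step that deserves care is the uniqueness argument used to identify $X$ with $\Sigma_\mathcal{K}$: it relies on the fact that a Hurwitz $\mathcal{A}$ makes the Lyapunov operator $X\mapsto \mathcal{A}X+X\mathcal{A}^\top$ invertible on the space of symmetric matrices, so any two solutions of the same Lyapunov equation must agree. Once this is spelled out, the equivalence of \eqref{op_lqg} and \eqref{eq:h2_problem} follows immediately, providing the deterministic frequency-domain reformulation that will underpin the subsequent LFT-based optimality analysis.
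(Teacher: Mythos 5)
Your proof is correct and takes essentially the same route as the paper: both reduce $\|G_\mathcal{K}\|_{\mathcal{H}_2}^2$ to the standard state-space Gramian formula and match the resulting Lyapunov equation against \eqref{eq_lyap2}, then invoke \eqref{eq_cost_lyap}. The only cosmetic difference is that you use the controllability Gramian $\Sigma_\mathcal{K}$ (yielding $\operatorname{tr}(\mathcal{C}^\top\mathcal{C}\,\Sigma_\mathcal{K})$) while the paper's displayed computation goes through the observability Gramian $\mathcal{P}_\mathcal{K}$ (yielding $\operatorname{tr}(\mathcal{B}\mathcal{B}^\top\mathcal{P}_\mathcal{K})$); since both representations already appear in \eqref{eq_cost_lyap}, this is the same argument in dual form.
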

\begin{proof}
	Note that the feasible set of \eqref{eq:h2_problem} is the same as that of problem \eqref{op_lqg}. Then, for any$\mathcal{K}\in \mathbb{K}_q$ and Parseval's theorem, the objective $\|G_{\mathcal{K}}\|_{\mathcal{H}_2}^2$ admits an equivalent time-domain representation
	\begin{equation}
		\|G_{\mathcal{K}}\|_{\mathcal{H}_2}^2 = \operatorname{tr}\left( \int_{0}^{\infty} \mathcal{B}^\top e^{\mathcal{A}^\top t}\mathcal{C}^\top \mathcal{C}e^{\mathcal{A}t}\mathcal{B} dt \right). \label{eq:h2_time_domain}
	\end{equation}
	As established in \eqref{eq_cost_lyap} above, this expression equals to
	\begin{equation}
		\operatorname{tr}\left( \begin{bmatrix} \mathcal{W} & 0 \\ 0 & B_\mathcal{K}\mathcal{V}B_\mathcal{K}^\top \end{bmatrix} \mathcal{P}_{\mathcal{K}} \right)=J(\mathcal{K}). \label{eq:h2_lyap}
	\end{equation}
	This completes the proof.
\end{proof}

Building on the equivalence between problem \eqref{op_lqg} and problem \eqref{eq:h2_problem}, we analyze the optimality condition of problem \eqref{op_lqg} by considering the perturbations for the $\mathcal{H}_2$ norm
optimization problem.
Let $\mathcal{K}$ be any controller parameter belonging $\mathbb{K}_q$, and consider a perturbed controller $\mathcal{K}' = \mathcal{K} + \Delta\mathcal{K}\in\mathbb{K}_q$, where the perturbation $\Delta\mathcal{K}$ is defined as
\begin{align*}
	&\Delta C_\mathcal{K} := C_{\mathcal{K}'} - C_\mathcal{K},\quad 
	\Delta B_\mathcal{K} := B_{\mathcal{K}'} - B_\mathcal{K},\quad
	\Delta A_\mathcal{K} := A_{\mathcal{K}'} - A_\mathcal{K},\\
	&\Delta \mathcal{K} :=\begin{bmatrix}
		0 & \Delta C_\mathcal{K}\\
		\Delta B_\mathcal{K} & \Delta A_\mathcal{K}
	\end{bmatrix}.
\end{align*}
Unlike conventional optimization methods, we explicitly extract the controller perturbation $\Delta\mathcal{K}$ as an external feedback block. Specifically, for a perturbed controller $\mathcal{K}' = \mathcal{K} + \Delta\mathcal{K}$, the input-output behavior \cite{1995Robust} of the closed-loop system with $G_{\mathcal{K}'}$ can be reformulated as a feedback interconnection between the nominal system $G_{\mathcal{K}}$ (parameterized by $\mathcal{K}$) and the perturbation block $\Delta\mathcal{K}$, with the interconnection mediated by the auxiliary signals $(\hat{y}_{1t}, \hat{u}_{1t}, y_t, y_{3t})$ \cite{Mohammadi2019ConvergenceAS}. Here, $(y_t, y_{3t})$ serves as the output signals from $G_{\mathcal{K}}$ to $\Delta\mathcal{K}$, while $(\hat{y}_{1t}, \hat{u}_{1t})$ is the corresponding feedback input. This reformulation allows us to analyze the system's performance using the deterministic $\mathcal{H}_2$ norm, which corresponds to the expected steady-state variance in the original LQG problem. With a slight abuse of notation, the symbols $\omega_t$ and $v_t$ are now reinterpreted as deterministic exogenous inputs rather than stochastic noise processes. This is a standard technique where the statistical properties of the original noise are captured by weighting these new inputs with the Cholesky factors of their respective covariance matrices, $\mathcal{W}^{1/2}$ and $\mathcal{V}^{1/2}$. The resulting LFT representation is given by
\begin{align}
	&\begin{bmatrix} \dot{x}_t \\ \dot{\hat{x}}_t \end{bmatrix} = 
	\begin{bmatrix} A & BC_\mathcal{K} \\ B_\mathcal{K}C & A_\mathcal{K} \end{bmatrix}
	\begin{bmatrix} x_t \\ \hat{x}_t \end{bmatrix} + 
	\begin{bmatrix} \mathcal{W}^{\frac{1}{2}} & {0} & B & {0} \\ {0} & B_\mathcal{K}\mathcal{V}^{\frac{1}{2}} & {0} & I \end{bmatrix}
	\begin{bmatrix} \omega_t \\ v_t \\ \hat{y}_{1t} \\ \hat{u}_{1t} \end{bmatrix}, \label{eq:lft_state} \\
	&\begin{bmatrix} y_{1t} \\ y_{2t} \\ y_t \\ y_{3t} \end{bmatrix} = 
	\begin{bmatrix} Q^\frac{1}{2} & {0} \\ {0} & R^\frac{1}{2}C_\mathcal{K} \\ C & {0} \\ {0} & I \end{bmatrix}
	\begin{bmatrix} x_t \\ \hat{x}_t \end{bmatrix} + 
	\begin{bmatrix} 
		{0} & {0} & {0} & {0} \\ 
		{0} & {0} & R^\frac{1}{2} & {0} \\ 
		{0} & \mathcal{V}^{\frac{1}{2}} & {0} & {0} \\ 
		{0} & {0} & {0} & {0} 
	\end{bmatrix}
	\begin{bmatrix} \omega_t \\ v_t \\ \hat{y}_{1t} \\ \hat{u}_{1t} \end{bmatrix}, \label{eq:lft_output} \\
	&\begin{bmatrix} \hat{y}_{1t} \\ \hat{u}_{1t} \end{bmatrix} = 
	\Delta\mathcal{K} \begin{bmatrix} y_t \\ y_{3t} \end{bmatrix}. \label{eq:lft_perturb}
\end{align}
This frequency-domain formulation is particularly advantageous for sensitivity and robustness analysis, enabling a systematic investigation of controller optimality condition under perturbations.
The corresponding transfer function $G_{\mathcal{K}'}(s)$ is given by the upper LFT
\begin{align}
	G_{\mathcal{K}'}(s) &= \mathcal{F}_u \left( \mathbf{M}_\mathcal{K}(s), \Delta\mathcal{K} \right) = \mathbf{M}_{11}^\mathcal{K}(s) + \mathbf{M}_{12}^\mathcal{K}(s) \Delta\mathcal{K} (I - \mathbf{M}_{22}^\mathcal{K}(s) \Delta\mathcal{K})^{-1} \mathbf{M}_{21}^\mathcal{K}(s), \label{eq:lft_transfer}
\end{align}
where 
\[\mathbf{M}_\mathcal{K}(s):=\begin{bmatrix}
	\mathbf{M}_{11}^\mathcal{K}(s) & \mathbf{M}_{12}^\mathcal{K}(s)\\
	\mathbf{M}_{21}^\mathcal{K}(s) & \mathbf{M}_{22}^\mathcal{K}(s)
\end{bmatrix}
\]
is the nominal transfer function matrix with realization
\begin{equation}
	\begin{aligned}\label{m_k}
		\mathbf{M}_{11}^\mathcal{K}(s) &:= \mathcal{C}(sI - \mathcal{A})^{-1}\mathcal{B}=G_\mathcal{K}(s), \\
		\mathbf{M}_{12}^\mathcal{K}(s) &:= \mathcal{C}(sI - \mathcal{A})^{-1} \begin{bmatrix} B & {0} \\ {0} & I \end{bmatrix} + \begin{bmatrix} {0} & {0} \\ R^\frac{1}{2} & {0} \end{bmatrix}, \\
		\mathbf{M}_{21}^\mathcal{K}(s) &:= \begin{bmatrix} C & {0} \\ {0} & I \end{bmatrix} (sI - \mathcal{A})^{-1} \mathcal{B} + \begin{bmatrix} {0} & \mathcal{V}^{\frac{1}{2}} \\ {0} & {0} \end{bmatrix}, \\
		\mathbf{M}_{22}^\mathcal{K}(s) &:= \begin{bmatrix} C & {0} \\ {0} & I \end{bmatrix} (sI - \mathcal{A})^{-1} \begin{bmatrix} B & {0} \\ {0} & I \end{bmatrix}.
	\end{aligned}
\end{equation}
Letting $\Delta G(s):=\mathbf{M}_{12}^\mathcal{K}(s) \Delta\mathcal{K} (I - \mathbf{M}_{22}^\mathcal{K}(s) \Delta\mathcal{K})^{-1} \mathbf{M}_{21}^\mathcal{K}(s)$, the $\mathcal{H}_2$ norm $\|G_{\mathcal{K}'}\|_{\mathcal{H}_2}$ can be expanded as
\begin{equation}
	\|G_{\mathcal{K}'}\|_{\mathcal{H}_2}^2 = \|G_\mathcal{K}\|_{\mathcal{H}_2}^2 + 2 \left\langle G_\mathcal{K}, \Delta G \right\rangle_{\mathcal{H}_2} + \mathcal{O}(\|\Delta\mathcal{K}\|^2), \label{eq:h2_expansion}
\end{equation}
where the first-order variation term is
\begin{equation}
	\left\langle G_\mathcal{K}, \Delta G \right\rangle_{\mathcal{H}_2} = \operatorname{tr}\left( \int_{-\infty}^{\infty} G_\mathcal{K}^*(j\omega) \mathbf{M}_{12}^\mathcal{K}(j\omega) \Delta\mathcal{K} (I - \mathbf{M}_{22}^\mathcal{K}(j\omega) \Delta\mathcal{K})^{-1} \mathbf{M}_{21}^\mathcal{K}(j\omega) \frac{d\omega}{2\pi}\right). \label{eq:first_order}
\end{equation}
Observing that $G_{\mathcal{K}'}$ is a linear function of $\Delta\mathcal{K} (I - \mathbf{M}_{22}^\mathcal{K} \Delta\mathcal{K})^{-1}$, we can express the higher-order term as
\begin{equation*}
	\mathcal{O}(\|\Delta\mathcal{K}\|^2) = \left\| \mathbf{M}_{12}^\mathcal{K} \Delta\mathcal{K} (I - \mathbf{M}_{22}^\mathcal{K} \Delta\mathcal{K})^{-1} \mathbf{M}_{21}^\mathcal{K} \right\|_{\mathcal{H}_2}^2 \geq 0.
\end{equation*}
The nonnegativity of this higher-order term implies that a sufficient condition for $\mathcal{K}$ to be an optimal solution of problem~\eqref{op_lqg} is the vanishing of first-order variation:
\begin{equation}
	\left\langle G_\mathcal{K}, \Delta G \right\rangle_{\mathcal{H}_2} = 0, \quad \forall \mathcal{K}' \in \mathbb{K}_q. \label{eq:stationarity_condition}
\end{equation}
% --- Recommended Merged and Polished Version ---

% --- Final Recommended Version ---

This stationarity condition, which we establish in Theorem~\ref{main_res} below as the necessary and sufficient condition for global optimality in problem \eqref{op_lqg}, admits a clear frequency-domain interpretation. Our derivation is closely related to the Youla parameterization \cite{1104023}. However, a key advantage of our LFT-based approach is that it explicitly retains the controller structure from~\eqref{eq_parameterization}. This structural preservation is significant because it builds a direct bridge between classic control theory and modern policy optimization methods \cite{9993305}.

To leverage this frequency-domain perspective and rigorously analyze the stationarity condition \eqref{eq:stationarity_condition}, we express the $\mathcal{H}_2$ inner product using residue calculus. The following lemma provides the explicit formula required for our subsequent derivations.

\begin{lemma}\cite{1995Robust}
	\label{res}
	Let strictly proper real-coefficient rational vector functions \( f(s) \) and \( g(s) \) be square-integrable with no poles on the purely imaginary axis. Then, the $\mathcal{H}_2$ inner product $\langle f,g\rangle_{\mathcal{H}_2}$ equals the sum of residues of $f(-s)^\top g(s)$ at poles in the right half-plane: $\operatorname{Res}_{\Re (s)< 0}\big(f(-s)^\top g(s)\big)$.
\end{lemma}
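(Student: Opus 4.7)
The plan is to rewrite the defining integral of $\langle f, g\rangle_{\mathcal{H}_2}$ as a closed-contour integral in $\mathbb{C}$ and invoke the residue theorem, with the strict properness hypothesis doing the analytic work of killing the arc at infinity.

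First, I would use the reality of the coefficients of $f$ and $g$: for any real-coefficient rational function one has $\overline{f(j\omega)} = f(-j\omega)$, so on the imaginary axis
\[
f^*(j\omega)\, g(j\omega) \;=\; h(s)\big|_{s=j\omega}, \qquad h(s) := f(-s)^\top g(s).
\]
Consequently, the definition of the $\mathcal{H}_2$ inner product rewrites as
\[
\langle f, g\rangle_{\mathcal{H}_2} \;=\; \frac{1}{2\pi}\int_{-\infty}^{\infty} h(j\omega)\, d\omega \;=\; \frac{1}{2\pi j}\int_{-j\infty}^{j\infty} h(s)\, ds.
\]

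Next, I would close the contour by attaching a semicircular arc $C_R$ of radius $R$ in a chosen half-plane. The assumption that neither $f$ nor $g$ has poles on the imaginary axis ensures the resulting closed curve avoids every singularity of $h$ for all sufficiently large $R$. The key analytic estimate is the vanishing of the arc contribution: strict properness forces $f(s),\, g(s) = O(|s|^{-1})$ at infinity, so $h(s) = O(|s|^{-2})$, and the standard length-times-max bound gives
\[
\left|\int_{C_R} h(s)\, ds\right| \;\le\; \pi R \cdot O(R^{-2}) \;=\; O(R^{-1}) \;\to\; 0.
\]

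Finally, the residue theorem applied to the closed contour identifies the integral along the imaginary axis with $\pm 2\pi j$ times the sum of residues of $h$ at the poles enclosed by the chosen half-plane, the sign being dictated by the orientation. Dividing by $2\pi j$ yields the stated identity. Because $h$ is strictly proper of degree $\ge 2$, its full residue sum over $\mathbb{C}$ vanishes, so closing on the opposite half-plane produces an equivalent expression; this is the hidden duality underlying the two conventions one finds in the literature.

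The main obstacle, and really the only one, is bookkeeping: carefully tracking the orientation of the semicircle and the resulting sign to produce the formula in exactly the form stated (in particular reconciling the subscript notation $\mathrm{Res}_{\Re(s)\le 0}$ with the textual reference to ``the right half-plane''). The analytic core---that the arc contribution disappears---is immediate from strict properness, and the algebraic identification $f^*(j\omega) = f(-j\omega)^\top$ is automatic from real coefficients, so no subtle estimates are required beyond the $|s|^{-2}$ decay of $h$.
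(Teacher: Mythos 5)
The paper does not prove this lemma at all: it is imported verbatim from \cite{1995Robust} as a citation, so there is no in-paper argument to compare against. Your contour-integral proof is the standard textbook derivation and is correct in all of its essential steps: the identity $f^*(j\omega)=f(-j\omega)^\top$ from real coefficients, the substitution $s=j\omega$, the $O(|s|^{-2})$ decay of $h(s)=f(-s)^\top g(s)$ killing the semicircular arc, and the residue theorem. The one piece you leave as ``bookkeeping'' is worth closing out, because the lemma's statement is internally inconsistent (the words say ``right half-plane'' while the subscript says $\Re(s)\le 0$), and the resolution matters for how the lemma is used later in the paper. Closing the contour in the \emph{left} half-plane gives a positively oriented curve (imaginary axis traversed upward, arc returning through $\Re(s)<0$), so
\begin{equation*}
\langle f,g\rangle_{\mathcal{H}_2}=\frac{1}{2\pi j}\int_{-j\infty}^{j\infty}h(s)\,ds=\sum_{\Re(p)<0}\operatorname{Res}_{s=p}h(s),
\end{equation*}
with no extra sign; a quick sanity check with $f=g=1/(s+1)$, where $h(s)=1/(1-s^2)$ has residue $1/2$ at $s=-1$ matching $\|f\|_{\mathcal{H}_2}^2=1/2$, confirms that the subscript $\Re(s)\le 0$ is the correct convention and the phrase ``right half-plane'' in the statement is a typo. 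Your observation that the total residue sum of $h$ vanishes (so the right-half-plane sum is the negative of the left-half-plane sum) correctly explains why both conventions circulate, but the version actually invoked in the proof of Theorem~\ref{main_res} is the left-half-plane one, consistent with the subscript. With that sign pinned down, your proof is complete.
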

Building on Lemma~\ref{res}, we note that the $\mathcal{H}_2$ inner product between two strictly proper rational functions is zero if one is stable (i.e., all the poles in the open left half-plane), while the other is anti-stable (i.e., all the poles in the open right half-plane); see \cite{1995Robust} for more details. This observation enables us to restate the stationarity condition~\eqref{eq:stationarity_condition} as an equivalent frequency-domain criterion, which we formalize in the following theorem.

\begin{theorem}\label{main_res}
	A controller $\mathcal{K} \in \mathbb{K}_q$ is an optimal solution to problem~\eqref{op_lqg} if and only if
	\begin{equation}\label{value}
		(\mathcal{C}_1-\mathcal{B}_1\mathcal{P}_\mathcal{K})(sI-\mathcal{A})^{-1}(\mathcal{B}_0-\Sigma_\mathcal{K}\mathcal{C}_0)\equiv0,\quad \forall s\in\mathbb{C}
	\end{equation}
	holds, where
	\begin{equation}
		\mathcal{B}_0=\begin{bmatrix}
			0 & 0\\
			B_\mathcal{K}\mathcal{V} & 0
		\end{bmatrix},\ 
		\mathcal{C}_0
		=-\begin{bmatrix}
			C^\top& 0\\
			0 & I
		\end{bmatrix},\ 
		\mathcal{B}_1=-\begin{bmatrix}
			B^\top& 0\\
			0 & I 
		\end{bmatrix},\ 
		\mathcal{C}_1
		=\begin{bmatrix}
			0& RC_\mathcal{K}\\
			0 & 0
		\end{bmatrix}.
	\end{equation}
\end{theorem}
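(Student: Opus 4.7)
The approach is to translate the stationarity condition \eqref{eq:stationarity_condition} into the frequency-domain identity \eqref{value} via a Wiener-Hopf style decomposition driven by the Lyapunov equations \eqref{eq_lyap2}. Combined with the exact decomposition \eqref{eq:h2_expansion} and the nonnegativity of $\|\Delta G\|_{\mathcal{H}_2}^2$ already exploited in the excerpt, this translation yields both directions of the theorem.

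The crux is to evaluate $\langle G_\mathcal{K},\Delta G\rangle_{\mathcal{H}_2}$ in closed form. Writing $G_\mathcal{K}^*(s)=\mathcal{B}^\top(-sI-\mathcal{A}^\top)^{-1}\mathcal{C}^\top$, the integrand contains the mixed resolvent product $\mathcal{B}^\top(-sI-\mathcal{A}^\top)^{-1}\mathcal{C}^\top\mathcal{C}(sI-\mathcal{A})^{-1}$ after absorbing the resolvent factor from $\mathbf{M}_{12}^\mathcal{K}$. Sandwiching the observability Lyapunov equation $\mathcal{A}^\top\mathcal{P}_\mathcal{K}+\mathcal{P}_\mathcal{K}\mathcal{A}+\mathcal{C}^\top\mathcal{C}=0$ between these resolvents produces the Wiener-Hopf identity
\[\mathcal{B}^\top(-sI-\mathcal{A}^\top)^{-1}\mathcal{C}^\top\mathcal{C}(sI-\mathcal{A})^{-1} = \mathcal{B}^\top\mathcal{P}_\mathcal{K}(sI-\mathcal{A})^{-1} + \mathcal{B}^\top(-sI-\mathcal{A}^\top)^{-1}\mathcal{P}_\mathcal{K},\]
which splits the mixed product into a stable summand and an anti-stable summand. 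By Lemma~\ref{res}, only the stable summand contributes to the $\mathcal{H}_2$ inner product against stable factors on the right. Applying the dual identity based on the controllability Lyapunov equation $\mathcal{A}\Sigma_\mathcal{K}+\Sigma_\mathcal{K}\mathcal{A}^\top=-\mathcal{B}\mathcal{B}^\top$ to absorb the resolvent from the $\mathbf{M}_{21}^\mathcal{K}$ side, and tracking how the direct-feedthrough contributions of $\mathbf{M}_{12}^\mathcal{K}$ and $\mathbf{M}_{21}^\mathcal{K}$ combine with the stable remainders $\mathcal{B}^\top\mathcal{P}_\mathcal{K}$ and $\Sigma_\mathcal{K}\mathcal{C}^\top$, the surviving integrand factors through the transfer matrix $(\mathcal{C}_1-\mathcal{B}_1\mathcal{P}_\mathcal{K})(sI-\mathcal{A})^{-1}(\mathcal{B}_0-\Sigma_\mathcal{K}\mathcal{C}_0)$. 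Reading off the blocks recovers exactly the definitions of $\mathcal{B}_0,\mathcal{C}_0,\mathcal{B}_1,\mathcal{C}_1$ in the theorem and shows that the vanishing of $\langle G_\mathcal{K},\Delta G\rangle_{\mathcal{H}_2}$ for every admissible $\Delta\mathcal{K}$ is equivalent to \eqref{value}.

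With this equivalence in hand, sufficiency is immediate: \eqref{value} forces the inner product to vanish for every admissible perturbation, so \eqref{eq:h2_expansion} gives $\|G_{\mathcal{K}'}\|_{\mathcal{H}_2}^2 \ge \|G_\mathcal{K}\|_{\mathcal{H}_2}^2$ for every $\mathcal{K}'\in\mathbb{K}_q$. For necessity, the LFT rewriting $G_{\mathcal{K}'}=G_\mathcal{K}+\mathbf{M}_{12}^\mathcal{K} T\mathbf{M}_{21}^\mathcal{K}$ with $T=\Delta\mathcal{K}(I-\mathbf{M}_{22}^\mathcal{K}\Delta\mathcal{K})^{-1}$ exhibits the cost as a convex quadratic functional of $T$, so optimality of $\mathcal{K}$ forces $\langle G_\mathcal{K},\mathbf{M}_{12}^\mathcal{K} T\mathbf{M}_{21}^\mathcal{K}\rangle_{\mathcal{H}_2}=0$ for every admissible $T$. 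Expanding $(I-\mathbf{M}_{22}^\mathcal{K}\Delta\mathcal{K})^{-1}$ as a power series and letting $\Delta\mathcal{K}$ range over its structured parameter set, with Cayley-Hamilton reducing the infinite Markov family to finitely many independent conditions, one isolates each Markov coefficient $(\mathcal{C}_1-\mathcal{B}_1\mathcal{P}_\mathcal{K})\mathcal{A}^k(\mathcal{B}_0-\Sigma_\mathcal{K}\mathcal{C}_0)$ and forces it to vanish, which is equivalent to \eqref{value}.

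The main obstacle is the bookkeeping in the middle step. The direct-feedthrough contributions of $\mathbf{M}_{12}^\mathcal{K}$ and $\mathbf{M}_{21}^\mathcal{K}$ produce cross terms that must combine precisely with the Lyapunov remainders to assemble the structured matrices $\mathcal{B}_0,\mathcal{C}_0,\mathcal{B}_1,\mathcal{C}_1$ stated in the theorem; verifying that the weights $R^{1/2}$ and $\mathcal{V}^{1/2}$ land in the correct blocks, and that the cancellations between Lyapunov remainders and direct-feedthrough contributions go through, is the only genuinely nontrivial calculation.
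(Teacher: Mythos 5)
Your sufficiency argument is essentially the paper's: the resolvent identity $(-sI-\mathcal{A}^\top)^{-1}\mathcal{C}^\top\mathcal{C}(sI-\mathcal{A})^{-1}=(-sI-\mathcal{A}^\top)^{-1}\mathcal{P}_\mathcal{K}+\mathcal{P}_\mathcal{K}(sI-\mathcal{A})^{-1}$ and its dual are exactly what the paper's similarity transformations $T$ and $T'$ implement at the realization level, and both routes isolate $(\mathcal{C}_1-\mathcal{B}_1\mathcal{P}_\mathcal{K})(sI-\mathcal{A})^{-1}(\mathcal{B}_0-\Sigma_\mathcal{K}\mathcal{C}_0)$ as the unique stable summand, after which Lemma~\ref{res} and the nonnegativity of $\|\Delta G\|_{\mathcal{H}_2}^2$ give sufficiency.

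The necessity half, however, has a genuine gap. You propose to expand $(I-\mathbf{M}_{22}^\mathcal{K}\Delta\mathcal{K})^{-1}$ as a power series with $\Delta\mathcal{K}$ ranging over the structured \emph{constant-matrix} parameter set and to isolate each Markov coefficient $(\mathcal{C}_1-\mathcal{B}_1\mathcal{P}_\mathcal{K})\mathcal{A}^k(\mathcal{B}_0-\Sigma_\mathcal{K}\mathcal{C}_0)$. This cannot work. Scaling $\Delta\mathcal{K}\mapsto t\Delta\mathcal{K}$, the term $\Delta\mathcal{K}(\mathbf{M}_{22}^\mathcal{K}\Delta\mathcal{K})^{j}$ is homogeneous of degree $j+1$ in $t$; first-order optimality therefore only forces the $j=0$ contribution $\langle G_\mathcal{K},\mathbf{M}_{12}^\mathcal{K}\Delta\mathcal{K}\mathbf{M}_{21}^\mathcal{K}\rangle_{\mathcal{H}_2}$ to vanish, which yields exactly the gradient condition $\mathbf{e}_{(q+m_1)\times(q+m_2)}\bigl(\operatorname{Res}_{\Re(s)\le 0}\bigl((\mathcal{C}_1-\mathcal{B}_1\mathcal{P}_\mathcal{K})(sI-\mathcal{A})^{-1}(\mathcal{B}_0-\Sigma_\mathcal{K}\mathcal{C}_0)\bigr),m_1,m_2\bigr)=0$, i.e.\ only the structured projection of the zeroth Markov parameter. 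At second and higher orders, the series terms are entangled with the nonnegative quadratic term $\|\mathbf{M}_{12}^\mathcal{K}\Delta\mathcal{K}\mathbf{M}_{21}^\mathcal{K}\|_{\mathcal{H}_2}^2$, so optimality yields only one-sided inequalities (and these are invariant under $\Delta\mathcal{K}\mapsto-\Delta\mathcal{K}$), not the equalities you need. Indeed, if your argument were correct, every stationary point of the direct parameterization would satisfy \eqref{value} and hence be globally optimal, contradicting Remark~\ref{rem} and the known existence of suboptimal stationary points of problem~\eqref{op_lqg}. The paper's necessity proof avoids this by working with perturbations richer than constant matrices: it invokes the vanishing of the leading block (Theorem~2 of \cite{9993305}), uses the bijectivity of the Youla parameterization to realize a \emph{dynamic} closed-loop perturbation $\Delta\mathcal{K}(I-\mathbf{M}_{22}^\mathcal{K}\Delta\mathcal{K})^{-1}=\epsilon\hat{\mathcal{F}}(s)$ aligned with the remaining blocks of the stable part, and exploits the fact that (for $q$ chosen so $\mathbb{K}_q$ contains the Riccati-optimal controller) optimality over $\mathbb{K}_q$ coincides with optimality over all stabilizing dynamic controllers, so such a perturbation producing a first-order decrease $\epsilon\|\hat{\mathcal{F}}\|_{\mathcal{H}_2}^2<0$ yields a contradiction. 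You would need to incorporate an argument of this type to close the necessity direction.
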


\begin{proof}
	We first prove the sufficiency. According to Lemma~\ref{res}, the $\mathcal{H}_2$ inner product $\langle G_\mathcal{K}, \Delta G \rangle_{\mathcal{H}_2}$ can be computed as the sum of residues of the product of their corresponding transfer functions in the left half-plane:
	\begin{equation*}
		\begin{aligned}
			&\left\langle G_\mathcal{K}, \Delta G \right\rangle_{\mathcal{H}_2} \\
			&= \operatorname{tr}\left( \int_{-\infty}^{\infty} ((\mathbf{M}_{12}^\mathcal{K}(j\omega))^* \mathbf{M}_{11}^\mathcal{K}(j\omega)(\mathbf{M}_{21}^\mathcal{K}(j\omega))^*) (\Delta\mathcal{K} (I - \mathbf{M}_{22}^\mathcal{K}(j\omega) \Delta\mathcal{K})^{-1}) ^*\frac{d\omega}{2\pi}\right)\\
			&=\,\operatorname{tr}\left( \operatorname{Res}_{\Re (s)< 0}\Big(((\mathbf{M}_{12}^\mathcal{K}(-s))^\top \mathbf{M}_{11}^\mathcal{K}(s)(\mathbf{M}_{21}^\mathcal{K}(-s))^\top(\Delta\mathcal{K}(I-\mathbf{M}_{22}^\mathcal{K}(-s)\Delta\mathcal{K})^{-1})^\top\Big)\right).
		\end{aligned}
	\end{equation*}
	Note that $\Delta\mathcal{K}(I-\mathbf{M}_{22}^\mathcal{K}(s)\Delta\mathcal{K})^{-1}$ is stable. Thus, if $(\mathbf{M}_{12}^\mathcal{K}(-s))^\top \mathbf{M}_{11}^\mathcal{K}(s)(\mathbf{M}_{21}^\mathcal{K}(-s))^\top$ is anti-stable, their $\mathcal{H}_2$ inner product must vanish. It remains to show that this property is equivalent to condition~\eqref{value}. To this end, we further examine the explicit structure of $(\mathbf{M}_{12}^\mathcal{K}(-s))^\top \mathbf{M}_{11}^\mathcal{K}(s)(\mathbf{M}_{21}^\mathcal{K}(-s))^\top$.  According to the concatenated representation of transfer function, we have
	\begin{equation*}
		\begin{aligned}\label{h2_lqg}
			\mathbf{M}_{11}^\mathcal{K}(s)(\mathbf{M}_{21}^\mathcal{K}(-s))^\top=\left[\begin{array}{cc|c}
				\mathcal{A} & \mathcal{B}\mathcal{B}^\top & \mathcal{B}_0\\
				0& -\mathcal{A}^\top & \mathcal{C}_0\\
				\hline
				\mathcal{C}& 0& 0
			\end{array}\right].	\end{aligned}
	\end{equation*}
	In a same way, we can obtain
	\begin{equation}
		\begin{aligned}\label{eq_real}
			&(\mathbf{M}_{12}^\mathcal{K}(-s))^\top \mathbf{M}_{11}^\mathcal{K}(s)(\mathbf{M}_{21}^\mathcal{K}(-s))^\top=\left[\begin{array}{ccc|c}
				-\mathcal{A}^\top & \mathcal{C}^\top\mathcal{C} & 0 & 0\\
				0&\mathcal{A} & \mathcal{B}\mathcal{B}^\top & \mathcal{B}_0\\
				0&0& -\mathcal{A}^\top & \mathcal{C}_0\\
				\hline
				\mathcal{B}_1& \mathcal{C}_1& 0&0
			\end{array}\right].	\end{aligned}
	\end{equation}
	Using similar transformation to \eqref{eq_real}
	\begin{equation}
		T\begin{bmatrix}
			-\mathcal{A}^\top & \mathcal{C}^\top\mathcal{C} & 0 \\
			0&\mathcal{A} & \mathcal{B}\mathcal{B}^\top \\
			0&0& -\mathcal{A}^\top 
		\end{bmatrix}T^{-1},\quad T\begin{bmatrix}
			0\\
			\mathcal{B}_0\\
			\mathcal{C}_0
		\end{bmatrix},\quad \begin{bmatrix}
			\mathcal{B}_1& \mathcal{C}_1& 0
		\end{bmatrix}T^{-1}
	\end{equation}
	with the transformation matrix
	$$
	T=\begin{bmatrix}
		I& 0&0\\
		0&I & -\Sigma_{\mathcal{K}}\\
		0&0& I
	\end{bmatrix},
	$$
	we can get
	\begin{equation}\label{eq_lyap_r}
		\begin{aligned}
			&(\mathbf{M}_{12}^\mathcal{K}(-s))^\top \mathbf{M}_{11}^\mathcal{K}(s)(\mathbf{M}_{21}^\mathcal{K}(-s))^\top\\
			&=\left[\begin{array}{ccc|c}
				-\mathcal{A}^\top & \mathcal{C}^\top\mathcal{C} & \mathcal{C}^\top\mathcal{C}\Sigma_\mathcal{K} & 0\\
				0&\mathcal{A} & \Sigma_\mathcal{K}\mathcal{A}^\top+\mathcal{A}\Sigma_\mathcal{K}+\mathcal{B}\mathcal{B}^\top & \mathcal{B}_0-\Sigma_\mathcal{K}\mathcal{C}_0\\
				0&0& -\mathcal{A}^\top & \mathcal{C}_0\\
				\hline
				\mathcal{B}_1& \mathcal{C}_1& \mathcal{C}_1\Sigma_\mathcal{K}&0
			\end{array}\right],
		\end{aligned}
	\end{equation}
	From Lyapunov equation \eqref{eq_lyap2}, it follows that $\Sigma_\mathcal{K}\mathcal{A}^\top+\mathcal{A}\Sigma_\mathcal{K}+\mathcal{B}\mathcal{B}^\top=0$. 
	By applying a similar similarity transformation to \eqref{eq_lyap_r}:
	\begin{equation}
		\begin{gathered}
			T'\begin{bmatrix}
				-\mathcal{A}^\top & \mathcal{C}^\top\mathcal{C} & \mathcal{C}^\top\mathcal{C}\Sigma_\mathcal{K} \\
				0&\mathcal{A} & \Sigma_\mathcal{K}\mathcal{A}^\top+\mathcal{A}\Sigma_\mathcal{K}+\mathcal{B}\mathcal{B}^\top \\
				0&0& -\mathcal{A}^\top 
			\end{bmatrix}{T'}^{-1},\\
			T'\begin{bmatrix}
				0\\
				\mathcal{B}_0-\Sigma_\mathcal{K}\mathcal{C}_0\\
				\mathcal{C}_0
			\end{bmatrix},\quad \begin{bmatrix}
				\mathcal{B}_1& \mathcal{C}_1& \mathcal{C}_1\Sigma_\mathcal{K}
			\end{bmatrix}{T'}^{-1}
		\end{gathered}
	\end{equation}
	with transformation matrix
	\[
	T'=\begin{bmatrix}
		I & \mathcal{P}_\mathcal{K} & 0 \\
		0 & I & 0 \\
		0 & 0 & I
	\end{bmatrix},
	\]
	we obtain
	\begin{equation*}
		\begin{aligned}
			&(\mathbf{M}_{12}^\mathcal{K}(-s))^\top \mathbf{M}_{11}^\mathcal{K}(s)(\mathbf{M}_{21}^\mathcal{K}(-s))^\top\\ 
			&= \left[\begin{array}{ccc|c}
				-\mathcal{A}^\top & \mathcal{A}^\top \mathcal{P}_\mathcal{K}+\mathcal{P}_\mathcal{K}\mathcal{A}+\mathcal{C}^\top\mathcal{C} & \mathcal{C}^\top\mathcal{C}\Sigma_\mathcal{K} & \mathcal{P}_\mathcal{K}(\mathcal{B}_0-\Sigma_\mathcal{K}\mathcal{C}_0)\\
				0 & \mathcal{A} & 0 & \mathcal{B}_0-\Sigma_\mathcal{K}\mathcal{C}_0\\
				0 & 0 & -\mathcal{A}^\top & \mathcal{C}_0\\
				\hline
				\mathcal{B}_1 & \mathcal{C}_1-\mathcal{B}_1\mathcal{P}_\mathcal{K} & \mathcal{C}_1\Sigma_\mathcal{K} & 0
			\end{array}\right].
		\end{aligned}
	\end{equation*}
	Similarly, Lyapunov equation \eqref{eq_lyap2} gives $\mathcal{A}^\top \mathcal{P}_\mathcal{K}+\mathcal{P}_\mathcal{K}\mathcal{A}+\mathcal{C}^\top\mathcal{C}=0$. Consequently, based on the above analysis, the expression $(\mathbf{M}_{12}^\mathcal{K}(-s))^\top \mathbf{M}_{11}^\mathcal{K}(s)(\mathbf{M}_{21}^\mathcal{K}(-s))^\top$ can be further simplified as
	\begin{equation}
		\begin{aligned}
			&(\mathbf{M}_{12}^\mathcal{K}(-s))^\top \mathbf{M}_{11}^\mathcal{K}(s)(\mathbf{M}_{21}^\mathcal{K}(-s))^\top\\
			&{=}\mathcal{B}_1(sI+\mathcal{A}^\top)^{-1}\mathcal{P}_\mathcal{K}(\mathcal{B}_0-\Sigma_\mathcal{K}\mathcal{C}_0)+(\mathcal{C}_1-\mathcal{B}_1\mathcal{P}_\mathcal{K})(sI-\mathcal{A})^{-1}(\mathcal{B}_0-\Sigma_\mathcal{K}\mathcal{C}_0)\\
			&\hphantom{=}+\mathcal{C}_1\Sigma_\mathcal{K}(sI+\mathcal{A}^\top)^{-1}\mathcal{C}_0+\mathcal{B}_1(sI+\mathcal{A}^\top)^{-1}\mathcal{C}^\top\mathcal{C}\Sigma_\mathcal{K}(sI+\mathcal{A}^\top)^{-1}\mathcal{C}_0.
		\end{aligned}\label{eq_4}
	\end{equation}
	Among these terms, only the second term is stable, while the first, third, and fourth terms are anti-stable. Consequently, in light of Lemma~\ref{res}, we conclude that
	\begin{align*}
		&\left\langle G_\mathcal{K}, \Delta G \right\rangle_{\mathcal{H}_2}\\
		&= \operatorname{tr}\left( \int_{-\infty}^{\infty} ((\mathbf{M}_{12}^\mathcal{K}(j\omega))^* \mathbf{M}_{11}^\mathcal{K}(j\omega)(\mathbf{M}_{21}^\mathcal{K}(j\omega))^*)( \Delta\mathcal{K} (I - \mathbf{M}_{22}^\mathcal{K}(j\omega) \Delta\mathcal{K})^{-1})^* \frac{d\omega}{2\pi}\right)\\
		&=\operatorname{tr}\big(\operatorname{Res}_{\Re (s)< 0}\left(\mathcal{B}_1(sI+\mathcal{A}^\top)^{-1}\mathcal{P}_\mathcal{K}(\mathcal{B}_0-\Sigma_\mathcal{K}\mathcal{C}_0)(\Delta\mathcal{K} (I - \mathbf{M}_{22}^\mathcal{K}(-s) \Delta\mathcal{K})^{-1})^\top\right)\\
		&\hphantom{=}+\operatorname{Res}_{\Re (s) < 0}\left((\mathcal{C}_1-\mathcal{B}_1\mathcal{P}_\mathcal{K})(sI-\mathcal{A})^{-1}(\mathcal{B}_0-\Sigma_\mathcal{K}\mathcal{C}_0)(\Delta\mathcal{K} (I - \mathbf{M}_{22}^\mathcal{K}(-s) \Delta\mathcal{K})^{-1})^\top\right)\\
		&\hphantom{=}+\operatorname{Res}_{\Re (s)< 0}\left(\mathcal{C}_1\Sigma_\mathcal{K}(sI+\mathcal{A}^\top)^{-1}\mathcal{C}_0(\Delta\mathcal{K} (I - \mathbf{M}_{22}^\mathcal{K}(-s) \Delta\mathcal{K})^{-1})^\top\right)\\
		&\hphantom{=}+\operatorname{Res}_{\Re (s)< 0}\left(\mathcal{B}_1(sI+\mathcal{A}^\top)^{-1}\mathcal{C}^\top\mathcal{C}\Sigma_\mathcal{K}(sI+\mathcal{A}^\top)^{-1}\mathcal{C}_0(\Delta\mathcal{K} (I - \mathbf{M}_{22}^\mathcal{K}(-s) \Delta\mathcal{K})^{-1})^\top\right)\\
		&=\operatorname{tr}\left(\operatorname{Res}_{\Re (s) < 0}\left((\mathcal{C}_1-\mathcal{B}_1\mathcal{P}_\mathcal{K})(sI-\mathcal{A})^{-1}(\mathcal{B}_0-\Sigma_\mathcal{K}\mathcal{C}_0)(\Delta\mathcal{K} (I - \mathbf{M}_{22}^\mathcal{K}(-s) \Delta\mathcal{K})^{-1})^\top\right)\right).
	\end{align*}
	Therefore, a sufficient condition for $(\mathbf{M}_{12}^\mathcal{K}(-s))^\top \mathbf{M}_{11}^\mathcal{K}(s)(\mathbf{M}_{21}^\mathcal{K}(-s))^\top$ to be anti-stable is that the second term in \eqref{eq_4} vanishes.  The preceding analysis has established the sufficiency of this condition.
	
	We now turn to the necessity part of the proof. For notational consistency and to facilitate subsequent derivations, we recall the definitions introduced earlier in \eqref{value}, specifically, we have
	\begin{align*}
		\mathcal{C}_1 - \mathcal{B}_1 \mathcal{P}_\mathcal{K} &= 
		\begin{bmatrix} 
			B^\top P_1^\mathcal{K} + R \bar{C}_\mathcal{K} \\ 
			P_2^\mathcal{K} 
		\end{bmatrix}, \\
		\mathcal{B}_0 - \Sigma_\mathcal{K} \mathcal{C}_0 &= 
		\begin{bmatrix} 
			(C \Sigma_1^\mathcal{K})^\top + \bar{B}_\mathcal{K} \mathcal{V} & \Sigma_2^{\mathcal{K}\top} 
		\end{bmatrix},
	\end{align*}
	where the involved matrices are defined by the following partitions and constructions:
	\begin{equation}\label{P2}
		\begin{aligned}
			P_1^\mathcal{K} &= \begin{bmatrix} P_{11}^\mathcal{K} & P_{12}^\mathcal{K} \end{bmatrix}, & 
			P_2^\mathcal{K} &= \begin{bmatrix} P_{21}^\mathcal{K} & P_{22}^\mathcal{K} \end{bmatrix}, \\
			\Sigma_1^\mathcal{K} &= \begin{bmatrix} \Sigma_{11}^\mathcal{K} & \Sigma_{12}^\mathcal{K} \end{bmatrix}, & 
			\Sigma_2^\mathcal{K} &= \begin{bmatrix} \Sigma_{21}^\mathcal{K} & \Sigma_{22}^\mathcal{K} \end{bmatrix}, \\
			\bar{C}_\mathcal{K} &= \begin{bmatrix} 0_{m_1 \times n} & C_\mathcal{K} \end{bmatrix}, &
			\bar{B}_\mathcal{K} &= \begin{bmatrix} 0_{n \times m_2} \\ B_\mathcal{K} \end{bmatrix}.
		\end{aligned}
	\end{equation}
	Suppose that $\mathcal{K}$ is an optimal solution to problem \eqref{op_lqg}. The term $(\mathcal{C}_1-\mathcal{B}_1\mathcal{P}_\mathcal{K})(sI-\mathcal{A})^{-1}(\mathcal{B}_0-\Sigma_\mathcal{K}\mathcal{C}_0)$ admits the following block expansion:
	\begin{equation*}
		\begin{aligned}\label{eq_value_de}
			&\begin{bmatrix}
				B^\top P_1^\mathcal{K}+R\bar{C}_\mathcal{K}\\
				P_2^\mathcal{K}
			\end{bmatrix}
			(sI-\mathcal{A})^{-1}
			\begin{bmatrix}
				(C\Sigma_1^\mathcal{K})^\top+\bar{B}_\mathcal{K}\mathcal{V} & \Sigma_2^{\mathcal{K}\top}
			\end{bmatrix} \\
			&=\!
			\begin{bmatrix}
				(B^\top P_1^\mathcal{K}+R\bar{C}_\mathcal{K})(sI-\mathcal{A})^{-1}((C\Sigma_1^\mathcal{K})^\top+\bar{B}_\mathcal{K}\mathcal{V}) & (B^\top P_1^\mathcal{K}+R\bar{C}_\mathcal{K})(sI-\mathcal{A})^{-1}\Sigma_2^{\mathcal{K}\top}\\
				P_2^\mathcal{K}(sI-\mathcal{A})^{-1}((C\Sigma_1^\mathcal{K})^\top+\bar{B}_\mathcal{K}\mathcal{V}) & P_2^\mathcal{K}(sI-\mathcal{A})^{-1}\Sigma_2^{\mathcal{K}\top}
			\end{bmatrix},
		\end{aligned}
	\end{equation*}
	According to Theorem 2 in \cite{9993305}, any optimal solution must satisfy
	\[
	(B^\top P_1^\mathcal{K}+R\bar{C}_\mathcal{K})(sI-\mathcal{A})^{-1}((C\Sigma_1^\mathcal{K})^\top+\bar{B}_\mathcal{K}\mathcal{V}) \equiv 0.
	\]
	This condition ensures that the leading block of \eqref{value} vanishes. Consequently, it remains to analyze the remaining terms. We can consider a stable transfer function $\hat{\mathcal{F}}(s)$
	\begin{align*}
		\hat{\mathcal{F}}(s)&=\begin{bmatrix}
			0& (B^\top P_1^\mathcal{K}+R\bar{C}_\mathcal{K})(sI-\mathcal{A})^{-1}\Sigma_2^{\mathcal{K}\top}\\
			P_2^\mathcal{K}(sI-\mathcal{A})^{-1}((C\Sigma_1^\mathcal{K})^\top+\bar{B}_\mathcal{K}\mathcal{V})& P_2^\mathcal{K}(sI-\mathcal{A})^{-1}\Sigma_2^{\mathcal{K}\top}
		\end{bmatrix}\\
		&=\begin{bmatrix}
			\hat{\mathcal{F}}_{11}(s) & \hat{\mathcal{F}}_{12}(s)\\
			\hat{\mathcal{F}}_{21}(s) & \hat{\mathcal{F}}_{22}(s)
		\end{bmatrix}.
	\end{align*}
	By the bijectivity of Youla parametrization, there exists a perturbation $\Delta \mathcal{K}$ such that $\Delta \mathcal{K}(I-\mathbf{M}_{22}^\mathcal{K}(s)\Delta \mathcal{K})^{-1} = \epsilon \hat{\mathcal{F}}(s)$ for some constant $\epsilon < 0$. 
	%	According to the small gain theorem, the perturbed transfer function $G_{\mathcal{K}'}(s)$ remains stable provided that $\|\Delta \mathcal{K}\|_\infty < 1 / \|\mathbf{M}_{22}^\mathcal{K}(s)\|_\infty$. Thus, by choosing $|\epsilon|$ sufficiently small, the stability of $G_{\mathcal{K}'}(s)$ can be ensured. 
	
	Under these settings, the integral of the coupling term 
	\[
	\operatorname{tr}\left( \int_{-\infty}^{\infty} ((\mathbf{M}_{12}^\mathcal{K}(j\omega))^* \mathbf{M}_{11}^\mathcal{K}(j\omega)(\mathbf{M}_{21}^\mathcal{K}(j\omega))^*) (\Delta\mathcal{K} (I - \mathbf{M}_{22}^\mathcal{K}(j\omega) \Delta\mathcal{K})^{-1}) ^*\frac{d\omega}{2\pi}\right)
	\]
	becomes
	\begin{align*}
		%		&\epsilon\operatorname{tr}\left((B^\top P_1^\mathcal{K}+R\bar{C}_\mathcal{K})\int_{-\infty}^{\infty}(j\omega I-\mathcal{A})^{-1}(\Sigma_2^{\mathcal{K}})^\top(\hat{\mathcal{F}}_{12}(j\omega))^\top\right)\frac{d\omega}{2\pi}\\
		%		&\quad+				\epsilon\operatorname{tr}\left((P_2^\mathcal{K}\int_{-\infty}^{\infty}(j\omega I-\mathcal{A})^{-1}((C\Sigma_1^\mathcal{K})^\top+\bar{B}_\mathcal{K}\mathcal{V})(\hat{\mathcal{F}}_{21}(j\omega))^\top\right)\frac{d\omega}{2\pi}\\
		%		&\quad+\epsilon\operatorname{tr}\left(P_2^\mathcal{K}\int_{-\infty}^{\infty}(j\omega I-\mathcal{A})^{-1}(\Sigma_2^{\mathcal{K}})^\top(\hat{\mathcal{F}}_{22}(j\omega))^\top\right)\frac{d\omega}{2\pi}\\
		%		&\quad+\operatorname{tr}\left(\int_{-\infty}^{\infty}(\mathbf{M}_{11}^\mathcal{K}(j\omega))^* \mathbf{M}_{12}^\mathcal{K}(j\omega)\Delta\mathcal{K}((I-\mathbf{M}_{22}^\mathcal{K}(j\omega)\Delta\mathcal{K})^{-1}-I)\mathbf{M}_{21}^\mathcal{K}(j\omega)\right)\frac{d\omega}{2\pi}\\
		&\epsilon\int_{-\infty}^{\infty}\operatorname{tr}\left(\hat{\mathcal{F}}_{12}(j\omega)(\hat{\mathcal{F}}_{12}(j\omega))^*+				\hat{\mathcal{F}}_{21}(j\omega)(\hat{\mathcal{F}}_{21}(j\omega))^*+\hat{\mathcal{F}}_{22}(j\omega)(\hat{\mathcal{F}}_{22}(j\omega))^*\right)\frac{d\omega}{2\pi}=\epsilon\|\hat{\mathcal{F}}\|_{\mathcal{H}_2}^2.
	\end{align*}
	%	The second equality follows from the definition of 
	We prove necessity by contradiction. Assume the condition \eqref{eq_lqr_in2} does not hold, meaning the corresponding stable transfer function is not identically zero. By the Maximum Modulus Principle, this function must be non-zero on the imaginary axis, which implies that our constructed $\hat{\mathcal{F}}(s)$ is non-zero and its $\mathcal{H}_2$ norm is strictly positive ($\|\hat{\mathcal{F}}\|_{\mathcal{H}_2}^2 > 0$). The first-order change in cost is then $\epsilon\|\hat{\mathcal{F}}\|_{\mathcal{H}_2}^2$. Choosing $\epsilon < 0$ leads to a cost decrease, which contradicts the assumed optimality of $\mathcal{K}$. Thus, the condition \eqref{eq_lqr_in2} must hold.
\end{proof}

\begin{remark} 
We present an equivalent algebraic formulation in the state-space domain to gain further insight into the optimality condition \eqref{value}.  
This reformulation transforms the original infinite-dimensional frequency-domain condition~\eqref{value} into a set of finite-dimensional equations, thereby providing a complementary perspective and facilitating the analysis of the structural properties of optimal solutions of problem \eqref{op_lqg}.
	
Standard results in state-space theory imply that the optimality condition~\eqref{value} holds if and only if the following system is anti-stable:
	\begin{equation}\label{sys_value}
		\left[\begin{array}{c|c}
			\mathcal{A}& \mathcal{B}_0-\Sigma_\mathcal{K}\mathcal{C}_0\\
			\hline
			\mathcal{C}_1-\mathcal{B}_1\mathcal{P}_\mathcal{K}& 0
		\end{array}\right].
	\end{equation}
	Moreover, under the assumption that $\mathcal{A}$ is stable, standard definitions of controllability and observability imply that system~\eqref{sys_value} is anti-stable if and only if its Kalman decomposition contains no subsystem that is both controllable and observable. It is equivalent to
	\begin{equation}\label{eq_equiv_opt}
		(\mathcal{C}_1-\mathcal{B}_1\mathcal{P}_\mathcal{K})\mathcal{A}^{i}(\mathcal{B}_0-\Sigma_\mathcal{K}\mathcal{C}_0)=0, \quad \forall i=0,1,\dots, n+q-1.
	\end{equation}
	This algebraic formulation enables the derivation of more tractable sufficient conditions for optimality, thereby facilitating the analysis and design of practical algorithms. For example, the following corollary presents a sufficient condition that generalizes the optimality results in~\cite{zhang2021policy}.
\end{remark}

\begin{corollary}\label{lqg}
	A controller parameter $\mathcal{K}$ is globally optimal for LQG (or $\mathcal{H}_2$) problem if
	\begin{enumerate}
		\item $\mathcal{K}$ is a stationary point of problem \eqref{op_lqg}: $\nabla J(\mathcal{K}) = 0$.
		\item The rank condition 
		$\operatorname{rank}\left( \begin{bmatrix} P_{21}^\mathcal{K} & P_{22}^\mathcal{K} \end{bmatrix} \right) = \operatorname{rank}\left( \begin{bmatrix} \Sigma_{21}^\mathcal{K} & \Sigma_{22}^\mathcal{K} \end{bmatrix} \right) = q=n$ holds.
	\end{enumerate}
\end{corollary}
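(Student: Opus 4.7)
The plan is to invoke Theorem \ref{main_res} and verify its optimality certificate in the equivalent Markov-parameter form \eqref{eq_equiv_opt}, namely $(\mathcal{C}_1-\mathcal{B}_1\mathcal{P}_\mathcal{K})\mathcal{A}^{i}(\mathcal{B}_0-\Sigma_\mathcal{K}\mathcal{C}_0)=0$ for $i=0,\dots,n+q-1$. With $X_1:=B^\top P_1^\mathcal{K}+R\bar{C}_\mathcal{K}$ and $Y_1:=(C\Sigma_1^\mathcal{K})^\top+\bar{B}_\mathcal{K}\mathcal{V}$ as in the proof of Theorem \ref{main_res}, every such Markov parameter splits into four block families,
\[
X_1\mathcal{A}^i Y_1, \quad X_1\mathcal{A}^i\Sigma_2^{\mathcal{K}\top}, \quad P_2^\mathcal{K}\mathcal{A}^i Y_1, \quad P_2^\mathcal{K}\mathcal{A}^i\Sigma_2^{\mathcal{K}\top},
\]
and I would show each family vanishes for all relevant $i$.

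The first and hardest step is the $(1,1)$ family $X_1\mathcal{A}^i Y_1\equiv 0$. Stationarity $\nabla J(\mathcal{K})=0$, expanded into partial gradients, directly yields only $X_1\Sigma_2^{\mathcal{K}\top}=0$, $P_2^\mathcal{K}Y_1=0$, and $P_2^\mathcal{K}\Sigma_2^{\mathcal{K}\top}=0$ (from $\nabla_{C_\mathcal{K}}J$, $\nabla_{B_\mathcal{K}}J$, and $\nabla_{A_\mathcal{K}}J$ respectively); the base identity $X_1 Y_1=0$ is \emph{not} produced directly, because the $(1,1)$ block of $\mathcal{K}$ is fixed at zero and no partial variation corresponds to it. The rank hypotheses $\operatorname{rank}(P_2^\mathcal{K})=\operatorname{rank}(\Sigma_2^\mathcal{K})=q$ enter precisely to bridge this gap: combined with the three gradient identities and the same subspace/Lyapunov reasoning that underlies Theorem 2 of \cite{9993305} (invoked in the necessity proof of Theorem \ref{main_res}), they yield the full $(1,1)$ identity $X_1\mathcal{A}^i Y_1\equiv 0$. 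Without the rank conditions this step fails, which is exactly the mechanism producing the suboptimal stationary points flagged earlier.

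Once the $(1,1)$ family is secured, the remaining three families propagate to all $i\geq 1$ by a Lyapunov-based recursion. Multiplying the equations in \eqref{eq_lyap2}—the one for $\mathcal{P}_\mathcal{K}$ on the left by $[0\ I_q]$, and the one for $\Sigma_\mathcal{K}$ on the right by $[0\ I_q]^\top$—gives
\[
P_2^\mathcal{K}\mathcal{A}=-C_\mathcal{K}^\top X_1 - A_\mathcal{K}^\top P_2^\mathcal{K}, \qquad
\mathcal{A}\Sigma_2^{\mathcal{K}\top}=-Y_1 B_\mathcal{K}^\top - \Sigma_2^{\mathcal{K}\top}A_\mathcal{K}^\top.
\]
Substituting these into $X_1\mathcal{A}^i\Sigma_2^{\mathcal{K}\top}$, $P_2^\mathcal{K}\mathcal{A}^i Y_1$, and $P_2^\mathcal{K}\mathcal{A}^i\Sigma_2^{\mathcal{K}\top}$, and using the $(1,1)$ identity (and later the $(1,2)$ identity when handling the $(2,2)$ family), each term reduces either to a base case or to a smaller-index Markov parameter in a previously handled family. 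A staged induction on $i$, truncated at $n+q-1$ by Cayley--Hamilton, then completes the verification of \eqref{eq_equiv_opt} and yields the global optimality of $\mathcal{K}$ via Theorem \ref{main_res}. The main obstacle throughout lies in the $(1,1)$ base step; everything downstream is essentially mechanical linear algebra.
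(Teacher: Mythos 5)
Your overall strategy (verify the certificate of Theorem \ref{main_res} in its Markov-parameter form \eqref{eq_equiv_opt}, split into the four block families, and use the gradient identities $X_1\Sigma_2^{\mathcal{K}\top}=0$, $P_2^{\mathcal{K}}Y_1=0$, $P_2^{\mathcal{K}}\Sigma_2^{\mathcal{K}\top}=0$ together with the Lyapunov recursions for $P_2^{\mathcal{K}}\mathcal{A}$ and $\mathcal{A}\Sigma_2^{\mathcal{K}\top}$) is sound and closely parallels the paper's argument, and you correctly locate the crux: stationarity alone never produces the $(1,1)$ identity $X_1\mathcal{A}^iY_1=0$, and every other family's induction bottoms out there. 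However, your treatment of that crux is a genuine gap. You propose to close it by invoking ``the same subspace/Lyapunov reasoning that underlies Theorem 2 of \cite{9993305},'' but that theorem is a statement about \emph{optimal} controllers — it is used in the paper only in the \emph{necessity} direction of Theorem \ref{main_res}, where optimality is a hypothesis. Here optimality is the conclusion you are trying to establish, so the citation is inapplicable (indeed circular), and no actual argument for $X_1\mathcal{A}^iY_1=0$ is supplied.

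The paper closes this gap without any external theorem, by a change of coordinates that you should be able to reconstruct from your own ingredients. The rank conditions together with $P_2^{\mathcal{K}}\Sigma_2^{\mathcal{K}\top}=0$ make
$U^{-1}=\bigl[\,\Sigma_2^{\mathcal{K}\top}\;\; P_2^{\mathcal{K}\top}(P_2^{\mathcal{K}}P_2^{\mathcal{K}\top})^{-1}\,\bigr]$
invertible (implicitly with $q=n$), so $\ker P_2^{\mathcal{K}}=\operatorname{range}\Sigma_2^{\mathcal{K}\top}$. Then $P_2^{\mathcal{K}}Y_1=0$ forces $Y_1$ to factor through $\Sigma_2^{\mathcal{K}\top}$; the relation $P_2^{\mathcal{K}}\mathcal{A}\Sigma_2^{\mathcal{K}\top}=0$ (which follows from your two Lyapunov recursions plus stationarity) makes $\operatorname{range}\Sigma_2^{\mathcal{K}\top}$ an $\mathcal{A}$-invariant subspace; and $X_1\Sigma_2^{\mathcal{K}\top}=0$, $P_2^{\mathcal{K}}\Sigma_2^{\mathcal{K}\top}=0$ show the output map $\mathcal{C}_1-\mathcal{B}_1\mathcal{P}_{\mathcal{K}}$ annihilates it. In the $U$-coordinates the realization becomes block upper triangular with $\alpha=0$ and $\delta=0$, so the \emph{entire} transfer function $(\mathcal{C}_1-\mathcal{B}_1\mathcal{P}_\mathcal{K})(sI-\mathcal{A})^{-1}(\mathcal{B}_0-\Sigma_\mathcal{K}\mathcal{C}_0)$ vanishes in one step — your $(1,1)$ family included, and with no staged induction needed. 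If you replace the appeal to \cite{9993305} with this invariant-subspace argument (equivalently, note that $Y_1=\Sigma_2^{\mathcal{K}\top}\Gamma$ for some $\Gamma$ reduces your $(1,1)$ family to your $(1,2)$ family), your proof becomes complete and essentially coincides with the paper's.
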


\begin{proof}
	The gradient components of cost functional with respect to the three controller parameters, $A_{\mathcal{K}}, B_{\mathcal{K}},$ and $C_{\mathcal{K}},$ are given by
	\begin{align}
		\nabla_{A_{\mathcal{K}}}J(\mathcal{K})  &=2 (P_2^\mathcal{K} \Sigma_2^{\mathcal{K}\top}), \label{eq:gradA} \\
		\nabla_{B_\mathcal{K}} J(\mathcal{K}) &= 2(P_{22}^\mathcal{K} B_\mathcal{K} \mathcal{V} + P_2^\mathcal{K} \Sigma_1^{\mathcal{K}\top} C^\top), \label{eq:gradB} \\
		\nabla_{C_\mathcal{K}} J(\mathcal{K}) &= 2(R C_\mathcal{K} \Sigma_{22}^\mathcal{K} + B^\top P_1^\mathcal{K} \Sigma_2^{\mathcal{K}\top}), \label{eq:gradC}
	\end{align}
	where the matrices $ P_1^\mathcal{K},P_2^\mathcal{K},\Sigma_1^\mathcal{K}$ and $\Sigma_2^\mathcal{K}$ are defined in \eqref{P2}.
	
	Next, we utilize Lyapunov equation \eqref{eq_lyap2}, which plays a key role in establishing the relationships among the relevant variables. In particular, we compute
	\begin{align*}
		\mathcal{P}_\mathcal{K} \mathcal{A} \Sigma_2^{\mathcal{K}\top} + \mathcal{A}^\top \mathcal{P}_\mathcal{K} \Sigma_2^{\mathcal{K}\top}
		&~~= \begin{bmatrix} 
			P_1^\mathcal{K} \mathcal{A} \Sigma_2^{\mathcal{K}\top} \\ 
			P_2^\mathcal{K} \mathcal{A} \Sigma_2^{\mathcal{K}\top} 
		\end{bmatrix} + 
		\mathcal{A}^\top \begin{bmatrix} 
			P_1^\mathcal{K} \Sigma_2^{\mathcal{K}\top} \\ 
			P_2^\mathcal{K} \Sigma_2^{\mathcal{K}\top} 
		\end{bmatrix} \\
		&\overset{\eqref{eq:gradA}}{=} \begin{bmatrix} 
			P_1^\mathcal{K} \mathcal{A} \Sigma_2^{\mathcal{K}\top} \\ 
			P_2^\mathcal{K} \mathcal{A} \Sigma_2^{\mathcal{K}\top} 
		\end{bmatrix} +
		\mathcal{A}^\top \begin{bmatrix} 
			P_1^\mathcal{K} \Sigma_2^{\mathcal{K}\top} \\ 
			0 
		\end{bmatrix}  \\
		&~\;= \begin{bmatrix} 
			P_1^\mathcal{K} \mathcal{A} \Sigma_2^{\mathcal{K}\top} \\ 
			P_2^\mathcal{K} \mathcal{A} \Sigma_2^{\mathcal{K}\top} 
		\end{bmatrix} +
		\begin{bmatrix} 
			A^\top P_1^\mathcal{K} \Sigma_2^{\mathcal{K}\top} \\ 
			C_\mathcal{K}^\top B^\top P_1^\mathcal{K} \Sigma_2^{\mathcal{K}\top} 
		\end{bmatrix} \\
		&~\!\overset{\eqref{eq_lyap2}}{=} - \begin{bmatrix} 
			Q \Sigma_{12}^\mathcal{K} \\ 
			C_\mathcal{K}^\top R C_\mathcal{K} \Sigma_{22}^\mathcal{K} 
		\end{bmatrix}. 
	\end{align*}
	Finally, by invoking the stationarity condition for $C_\mathcal{K}$  \eqref{eq:gradC}, we obtain
	\begin{align*}
		&\begin{bmatrix} 
			P_1^\mathcal{K} \mathcal{A} \Sigma_2^{\mathcal{K}\top} \\ 
			P_2^\mathcal{K} \mathcal{A} \Sigma_2^{\mathcal{K}\top} 
		\end{bmatrix} + 
		\begin{bmatrix} 
			A^\top P_1^\mathcal{K} \Sigma_2^{\mathcal{K}\top} \\ 
			C_\mathcal{K}^\top \frac{\nabla_{C_\mathcal{K}} J(\mathcal{K})}{2}
		\end{bmatrix} = \begin{bmatrix} 
			P_1^\mathcal{K} \mathcal{A} \Sigma_2^{\mathcal{K}\top} \\ 
			P_2^\mathcal{K} \mathcal{A} \Sigma_2^{\mathcal{K}\top} 
		\end{bmatrix} + 
		\begin{bmatrix} 
			A^\top P_1^\mathcal{K} \Sigma_2^{\mathcal{K}\top} \\ 
			0 
		\end{bmatrix} = - \begin{bmatrix} 
			Q \Sigma_{12}^\mathcal{K} \\ 
			0
		\end{bmatrix}. 
	\end{align*}
	Focusing on the lower block of the above equation, we obtain the following key relationship
	\begin{equation}
		P_2^\mathcal{K} \mathcal{A} \Sigma_2^{\mathcal{K}\top} = 0. \label{eq:key_relation}
	\end{equation}
	To further exploit this structure, we construct a transformation matrix based on the rank condition $\operatorname{rank}(\Sigma_2^\mathcal{K}) = \operatorname{rank}(P_2^\mathcal{K}) = n$. Specifically, let
	%（后续矩阵定义同前）。
	\begin{align*}
		U &= \begin{bmatrix} 
			(\Sigma_2^\mathcal{K} \Sigma_2^{\mathcal{K}\top})^{-1} \Sigma_2^\mathcal{K} \\ 
			P_2^\mathcal{K} 
		\end{bmatrix}, \quad 
		U^{-1} = \begin{bmatrix} 
			\Sigma_2^{\mathcal{K}\top} & P_2^{\mathcal{K}\top} (P_2^\mathcal{K} P_2^{\mathcal{K}\top})^{-1} 
		\end{bmatrix};
	\end{align*}
	applying this transformation matrix to the transfer function \eqref{value}, we obtain
	\begin{align}
		&(\mathcal{C}_1 - \mathcal{B}_1 \mathcal{P}_\mathcal{K}) (sI - \mathcal{A})^{-1} (\mathcal{B}_0 - \Sigma_\mathcal{K} \mathcal{C}_0) \nonumber \\
		&= (\mathcal{C}_1 - \mathcal{B}_1 \mathcal{P}_\mathcal{K}) U^{-1} U (sI - \mathcal{A}) U^{-1} U (\mathcal{B}_0 - \Sigma_\mathcal{K} \mathcal{C}_0) \nonumber \\
		&= \begin{bmatrix} 
			\alpha & \beta 
		\end{bmatrix} 
		\left( sI - 
		\begin{bmatrix} 
			* & * \\ 
			0 & *
		\end{bmatrix} 
		\right)^{-1} 
		\begin{bmatrix} 
			\gamma \\ 
			\delta 
		\end{bmatrix}, \label{eq:transformed}
	\end{align}
	where the block upper triangular structure of the transformed system matrix follows directly from \eqref{eq:key_relation}.
	Compute the components
	\begin{align}
		\gamma &= (\Sigma_2^\mathcal{K} \Sigma_2^{\mathcal{K}\top})^{-1} \Sigma_2^\mathcal{K} (\mathcal{B}_0 - \Sigma^\mathcal{K} \mathcal{C}_0) = 		\begin{bmatrix} 
			(\Sigma_2^\mathcal{K} \Sigma_2^{\mathcal{K}\top})^{-1} \Sigma_2^\mathcal{K}((C \Sigma_1^\mathcal{K})^\top + \bar{B}_\mathcal{K} \mathcal{V} )& I 
		\end{bmatrix}, \label{eq:gamma_comp} \\
		\delta &= P_2^\mathcal{K} (\mathcal{B}_0 - \Sigma^\mathcal{K} \mathcal{C}_0) = \begin{bmatrix} 
			P_2^\mathcal{K} \Sigma_1^\mathcal{K} C^\top + P_2^\mathcal{K} \bar{B}_\mathcal{K} \mathcal{V} & 
			P_2^\mathcal{K} \Sigma_2^{\mathcal{K}\top} 
		\end{bmatrix} = 0, \quad (\text{by \eqref{eq:gradA} and \eqref{eq:gradB}} ) \label{eq:delta_comp} \\
		\alpha &= (\mathcal{C}_1 - \mathcal{B}_1 \mathcal{P}_\mathcal{K}) \Sigma_2^{\mathcal{K}\top}= \begin{bmatrix} 
			P_2^\mathcal{K} \Sigma_2^{\mathcal{K}\top} \\ 
			(B^\top P_1^\mathcal{K} + R \bar{C}_\mathcal{K}) \Sigma_2^{\mathcal{K}\top} 
		\end{bmatrix} = 0, \quad (\text{by \eqref{eq:gradA} and \eqref{eq:gradC}}) \label{eq:alpha_comp} \\
		\beta &= (\mathcal{C}_1 - \mathcal{B}_1 \mathcal{P}_\mathcal{K}) P_2^{\mathcal{K}\top} (P_2^\mathcal{K} P_2^{\mathcal{K}\top})^{-1} = \begin{bmatrix} 
			(B^\top P_1^\mathcal{K} + R \bar{C}_\mathcal{K}) P_2^{\mathcal{K}\top} (P_2^\mathcal{K} P_2^{\mathcal{K}\top})^{-1}\\ 
			I
		\end{bmatrix}. \label{eq:beta_comp}
	\end{align}
	Substituting \eqref{eq:gamma_comp}-\eqref{eq:beta_comp} into \eqref{eq:transformed} shows the expression $(\mathcal{C}_1 - \mathcal{B}_1 \mathcal{P}_\mathcal{K}) (sI - \mathcal{A})^{-1} (\mathcal{B}_0 - \Sigma_\mathcal{K} \mathcal{C}_0)=0$. By Theorem \ref{main_res}, $\mathcal{K}$ is globally optimal.
\end{proof}

\section{The difference of optimality conditions of LQG and LQR}\label{diff}

In this section, we present our second main contribution: a structural explanation for the existence of suboptimal stationary points in LQG problem.
It is well-known that stationary points of LQG problem \eqref{op_lqg} are not necessarily globally optimal \cite{pmlr-v144-tang21a}. Our frequency-domain criterion, established in Theorem~\ref{main_res}, provides the precise tool to analyze this phenomenon. Specifically, a stationary point is optimal if and only if the optimality condition
\[
(\mathcal{C}_1-\mathcal{B}_1\mathcal{P}_\mathcal{K})(sI-\mathcal{A})^{-1}(\mathcal{B}_0-\Sigma_\mathcal{K}\mathcal{C}_0)\equiv 0, \quad \forall s\in\mathbb{C}
\]
is satisfied. This issue stands in stark contrast to the classic LQR problem, which is well-behaved in that its unique stationary point is always globally optimal. 
 
 To illuminate this fundamental discrepancy, we will now dissect the structural differences between the LQG and LQR optimization landscapes. We begin by recalling LQR problem with a state-feedback gain~$K$:
\begin{equation}
	\begin{aligned}\label{op_lqr}
		&\underset{K\in\mathbb{K}_{LQR}}{\mathrm{min}}\quad f_{LQR}(K):= \lim_{T \to \infty} \frac{1}{T} \mathbb{E} \left[ \int_0^T \left( x_t^\top Q x_t + u_t^\top R u_t \right) dt \right], \\
		&~~~~\text{s.t.}\quad \left\{
		\begin{array}{l}
			\dot{x}_t = A x_t + B u_t + \omega_{t}', \\
			y_t = x_t, \quad \text{(full state measurement)} \\
			u_t = K y_t,
		\end{array}
		\right.
	\end{aligned}
\end{equation}
where~$\mathbb{K}_{LQR}$ denotes the set of all the state-feedback gains stabilizing system~\eqref{op_lqr}, and $\{\omega_t'\}$ is the Gaussian white noise process with intensity matrix $I$; the remaining notation is consistent with that in~\eqref{eq_lqg_in}.

\textbf{Definition.}
A stationary point for LQR problem~\eqref{op_lqr} is any feedback gain~\( K \) that satisfies
\begin{equation}
	\nabla_{K} f_{LQR}(K) = 0.
\end{equation}

% --- Recommended Version ---

{To illustrate the explanatory power of our framework, we now use it to deconstruct the well-known difference between the LQR and LQG optimization landscapes.} We begin with the LQR case. Unlike the general LQG problem, problem~\eqref{op_lqr} is known to admit a unique stationary point, which is also globally optimal~\cite{Fazel2018GlobalCO}. The following theorem formalizes the insight gained from our framework (Theorem~\ref{main_res}). Although this result is well known and has been previously established, by applying the new proof method developed through the analysis in Section 3, we demonstrate that the inherent structure of the LQR problem ensures that any stationary point satisfies the global optimality condition. This perspective not only recovers the classic result but also {sets the stage for our subsequent analysis in Remark~\ref{rem}, which will pinpoint exactly why suboptimal stationary points arise in the more challenging LQG setting.}

\begin{theorem}
	There exists a unique stationary point of problem~\eqref{op_lqr}, and it is the global optimum.
\end{theorem}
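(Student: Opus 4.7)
The plan is to derive the stationarity equations for problem~\eqref{op_lqr} via Lyapunov calculus, reduce them to the algebraic Riccati equation, and invoke the classical uniqueness of its stabilizing positive definite solution.

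First, I would express $f_{LQR}(K)$ in the two equivalent Lyapunov forms familiar from \eqref{eq_cost_lyap}: on the one hand, $f_{LQR}(K) = \operatorname{tr}(P_K)$ with $P_K$ solving
\[
(A+BK)^\top P_K + P_K(A+BK) + Q + K^\top R K = 0,
\]
and on the other, $f_{LQR}(K) = \operatorname{tr}\bigl((Q + K^\top R K)\Sigma_K\bigr)$ with $\Sigma_K$ solving
\[
(A+BK)\Sigma_K + \Sigma_K(A+BK)^\top + I = 0.
\]
Direct differentiation then yields the standard formula $\nabla_K f_{LQR}(K) = 2(RK + B^\top P_K)\Sigma_K$.

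Second, I would observe that because the noise intensity matrix is $I \succ 0$ and $A+BK$ is Hurwitz for every $K \in \mathbb{K}_{LQR}$, the Gramian $\Sigma_K = \int_0^\infty e^{(A+BK)t} e^{(A+BK)^\top t}\, dt$ is strictly positive definite. Hence, the stationarity condition $\nabla_K f_{LQR}(K) = 0$ is equivalent to the algebraic identity $RK + B^\top P_K = 0$, i.e., $K = -R^{-1}B^\top P_K$. Substituting this back into the Lyapunov equation for $P_K$ converts it into the algebraic Riccati equation
\[
A^\top P_K + P_K A - P_K B R^{-1} B^\top P_K + Q = 0,
\]
which, under the stabilizability and detectability assumptions already imposed in \eqref{eq_riccati}, admits a unique positive definite stabilizing solution. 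This solution determines $K$ uniquely, and the resulting $K$ coincides with the classical LQR-optimal gain, so it is in particular globally optimal.

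The main obstacle is not technical but structural: the proof must make explicit that stationarity alone already forces the complete optimality certificate in this setting. Viewed through the lens of Theorem~\ref{main_res}, the algebraic identity $RK + B^\top P_K = 0$ plays the role of the full optimality residual $(\mathcal{C}_1 - \mathcal{B}_1 \mathcal{P}_\mathcal{K})(sI - \mathcal{A})^{-1}(\mathcal{B}_0 - \Sigma_\mathcal{K}\mathcal{C}_0)$, but in the LQR setting it collapses to a single matrix equation rather than a frequency-dependent condition; combined with the invertibility of $\Sigma_K$, this leaves no room for stationarity to be strictly weaker than global optimality. This is precisely the structural observation I would emphasize, as it sets up the LQG contrast developed in the sequel.
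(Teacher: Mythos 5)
Your proof is correct, but it takes a genuinely different route from the paper's. You work entirely in the time domain: you differentiate the Lyapunov representation of $f_{LQR}$ to get $\nabla_K f_{LQR}(K) = 2(RK + B^\top P_K)\Sigma_K$, use $\Sigma_K \succ 0$ to reduce stationarity to $RK + B^\top P_K = 0$, substitute into the Lyapunov equation to recover the algebraic Riccati equation, and then appeal to the classical uniqueness of its stabilizing solution and the classical global optimality of the resulting gain. The paper instead deliberately runs the LQR problem through the same frequency-domain machinery it built for Theorem~\ref{main_res}: it writes $f_{LQR}(K')$ as an $\mathcal{H}_2$ norm of an LFT, performs the similarity transformations with $T$ and $T'$ to split $(\mathbf{M}_{12}^K(-s))^{\top}\mathbf{M}_{11}^K(s)(\mathbf{M}_{21}^K(-s))^{\top}$ into stable and anti-stable parts, identifies the gradient as the residue of the single stable term $(-B P_K + RK)(sI-(A+BK))^{-1}\Sigma_K$, and then argues that because $\Sigma_K\succ 0$ this transfer function is absolutely controllable, so the full frequency-domain optimality condition~\eqref{eq_lqr_in2} collapses to the vanishing of the gradient; global optimality then follows from the nonnegativity of the second-order term in the $\mathcal{H}_2$ expansion rather than from citing the classical LQR theory. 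Your argument is shorter and more self-contained at the level of this one theorem, and it correctly isolates the two load-bearing facts (invertibility of $\Sigma_K$ and uniqueness of the stabilizing Riccati solution); what it does not do is exercise the residue/anti-stability decomposition that the paper needs in order to make the LQR--LQG contrast of Remark~\ref{rem} precise --- there the point is exactly that the gradient is the \emph{structured projection} of a residue of a transfer function that in LQG is neither absolutely controllable nor absolutely observable. You gesture at this in your closing paragraph, which is the right instinct, but if this proof is meant to set up Remark~\ref{rem} you would need to actually exhibit the optimality condition in the form $(-BP_K+RK)(sI-(A+BK))^{-1}\Sigma_K\equiv 0$ and show it is equivalent to $\nabla_K f_{LQR}(K)=0$, rather than bypassing the frequency-domain condition altogether.
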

\begin{proof}
	To analyze the stationary points of problem~\eqref{op_lqr}, 
	we examine how the cost functional changes when the feedback gain is perturbed.  
	Specifically, for any $K \in \mathbb{K}_{LQR}$, we consider a small perturbation $\Delta K$ such that the new controller $K' = K + \Delta K$ also belongs to $\mathbb{K}_{LQR}$.
	The corresponding closed-loop system of problem \eqref{op_lqr} under  $K'$ is given by
	\begin{equation}
		\dot{x}_t = A x_t + B u_t + \omega_t', \qquad
		u_t = K' x_t = K x_t + \Delta K x_t.
	\end{equation}
	As in the LQG case (see the proof of Lemma~\ref{th1} and Eqs.~\eqref{eq:lft_state}--\eqref{eq:h2_expansion} there), the LQR performance functional $f_{\text{LQR}}(K')$ can also be represented as the square of a $\mathcal{H}_2$ norm:
	\begin{equation}\label{lqr_square}
		f_{\text{LQR}}(K') = \left\| \mathbf{M}_{11}^K + \mathbf{M}_{12}^K \Delta K (I - \mathbf{M}_{22}^K \Delta K)^{-1} \mathbf{M}_{21}^K \right\|_{\mathcal{H}_2}^2
	\end{equation}
	with transfer matrices
	\begin{equation}
		\begin{aligned}\label{eq_LQR_trans}
			\mathbf{M}_{11}^K(s) &= \begin{bmatrix} Q^{1/2} \\ R^{1/2} K \end{bmatrix} (sI - (A + BK))^{-1}, \\
			\mathbf{M}_{12}^{K}(s) &= \begin{bmatrix} Q^{1/2} \\ R^{1/2} K \end{bmatrix} (sI - (A + BK))^{-1} B + \begin{bmatrix} 0 \\ R^{1/2} \end{bmatrix}, \\
			\mathbf{M}_{21}^{K}(s) &= (sI - (A + BK))^{-1}, \\
			\mathbf{M}_{22}^{K}(s) &= (sI - (A + BK))^{-1} B.
		\end{aligned}
	\end{equation}
	As shown in \eqref{eq_LQR_trans}, the squared $\mathcal{H}_2$ norm in \eqref{lqr_square} can be expanded as
	\begin{equation}\label{lqr_h2}
		\begin{aligned}
			& \|\mathbf{M}_{11}^{K}\|_{\mathcal{H}_2}^2 + \|\mathbf{M}_{12}^{K}\Delta K (I-\mathbf{M}_{22}^{K} \Delta K)^{-1} \mathbf{M}_{21}^{K}\|_{\mathcal{H}_2}^2 \\
			& + 2 \left\langle \mathbf{M}_{11}^{K},~ \mathbf{M}_{12}^{K} \Delta K (I - \mathbf{M}_{22}^{K} \Delta K)^{-1} \mathbf{M}_{21}^{K} \right\rangle_{\mathcal{H}_2}.
		\end{aligned}
	\end{equation}
	For the stationary point analysis, recall that—as in \eqref{eq:stationarity_condition}—a sufficient optimality condition is
	\begin{equation}\label{lqr_cross}
		\left\langle \mathbf{M}_{11}^K,~ \mathbf{M}_{12}^K \Delta K (I - \mathbf{M}_{22}^K \Delta K)^{-1} \mathbf{M}_{21}^K \right\rangle_{\mathcal{H}_2}=0
	\end{equation}
	for all the admissible $\Delta K$. According to Lemma~\ref{res}, \eqref{lqr_cross} occurs precisely when the matrix-function
	\[
	(\mathbf{M}_{12}^K(-s))^{\top}\, \mathbf{M}_{11}^K(s) \, (\mathbf{M}_{21}^K(-s))^{\top}
	\]
	is anti-stable, i.e., it has no nontrivial zeros in the closed right half-plane.
	
	To connect the condition of anti-stable to the system matrices, we explicitly construct a state-space realization for $(\mathbf{M}_{12}^K(-s))^{\top}\, \mathbf{M}_{11}^K(s) \, (\mathbf{M}_{21}^K(-s))^{\top}$:
	\begin{equation}\label{eq_lqrd}
		\begin{aligned}
			&(\mathbf{M}_{12}^K(-s))^{\top}\, \mathbf{M}_{11}^K(s) \, (\mathbf{M}_{21}^K(-s))^{\top}\\
			& = 
			\left[
			\begin{array}{ccc|c}
				-(A+BK)^\top & Q + K^\top R K & 0 & 0 \\
				0 & A+BK & I & 0 \\
				0 & 0 & -(A+BK)^\top & -I \\
				\hline
				-B^\top & R K & 0 & 0
			\end{array}
			\right].
		\end{aligned}
	\end{equation}
	This realization highlights the connection between the system dynamics under $K$ and the instability condition. % Note: Make sure you have \usepackage{amsmath} in your document preamble.
	
	Following the proof of Theorem \ref{main_res}, we introduce the transformation matrices
	\begin{equation*}
		T=\begin{bmatrix}
			I& 0&0\\
			0&I & -\Sigma_K\\
			0&0& I
		\end{bmatrix} \quad \text{and} \quad T'=\begin{bmatrix}
			I & {P}_K & 0 \\
			0 & I & 0 \\
			0 & 0 & I
		\end{bmatrix},
	\end{equation*}
	where $P_K$ and $\Sigma_K$ solve the Lyapunov equations
	\begin{equation}
		\begin{aligned}
			(A+BK)^\top P_K + P_K(A+BK) &= -(Q + K^\top R K), \\
			(A+BK) \Sigma_K + \Sigma_K (A+BK)^\top &= -I.\label{lqr_lyp}
		\end{aligned}
	\end{equation}
	Applying a similarity transformation with the composite matrix $T'T$ to the system matrices in \eqref{eq_lqrd}
	\begin{equation}
		\begin{gathered}
			T'T\begin{bmatrix}
				-(A+BK)^\top & Q + K^\top R K & 0  \\
				0 & A+BK & I  \\
				0 & 0 & -(A+BK)^\top
			\end{bmatrix}(TT')^{-1},\\
			TT'\begin{bmatrix}
				0 \\
				0 \\
				I
			\end{bmatrix},\quad \begin{bmatrix}
				B^\top & R K & 0
			\end{bmatrix}(TT')^{-1}
		\end{gathered}
	\end{equation}
	respectively, yields the transformed realization:
	\begin{equation}
		\begin{aligned}
			&\label{eq_lqr} % Your original label is preserved
			(\mathbf{M}_{12}^K(-s))^{\top}\!\mathbf{M}_{11}^K(s) \! (\mathbf{M}_{21}^K(-s))^{\top}\\
			& =\! \!\!
			\left[
			\begin{array}{ccc|c}
				-(A+BK)^\top & 0 & (Q + K^\top R K)\Sigma_K & P_K \Sigma_K \\
				0 & (A+BK) & 0 & \Sigma_K \\
				0 & 0 & -(A+BK)^\top & -I \\
				\hline
				-B^\top & B^\top P_K + R K & R K \Sigma_K & 0
			\end{array}
			\right],
		\end{aligned}
	\end{equation}
	Based on \eqref{eq_lqr}, we can decompose $(\mathbf{M}_{12}^K(-s))^{\top}\, \mathbf{M}_{11}^K(s) \, (\mathbf{M}_{21}^K(-s))^{\top}$ into four subsystems:
	\begin{equation}\label{lqr_nab}
		\begin{aligned}
			&(\mathbf{M}_{12}^K(-s))^{\top}\, \mathbf{M}_{11}^K(s) \, (\mathbf{M}_{21}^K(-s))^{\top} \\
			&= -B^\top (sI + (A+BK)^\top)^{-1} P_K \Sigma_K - R K \Sigma_K (sI + (A+BK)^\top)^{-1} \\
			&\quad + B^\top(sI + (A+BK)^\top)^{-1}(Q + K^\top R K)\Sigma_K(sI + (A+BK)^\top)^{-1}\\
			&\quad + (B^\top P_K + R K) (sI - (A+BK))^{-1} \Sigma_K.
		\end{aligned}
	\end{equation}
	It is straightforward to verify that, among these terms, only the last term is stable, as it is associated with the stable part of the closed-loop dynamics.
	
	Substituting the above decomposition into the cross-term $\langle\mathbf{M}_{11}^{K}, \mathbf{M}_{12}^{K} \Delta K (I - \mathbf{M}_{22}^K \Delta K)^{-1} \mathbf{M}_{21}^K \rangle_{\mathcal{H}_2}$, we obtain
	\begin{equation}
		\begin{aligned}
			&\langle\mathbf{M}_{11}^{K}, \mathbf{M}_{12}^{K} \Delta K (I - \mathbf{M}_{22}^K \Delta K)^{-1} \mathbf{M}_{21}^K \rangle_{\mathcal{H}_2}\\
			&=\operatorname{tr}\left(\int_{-\infty}^{\infty} \mathbf{M}_{12}^K(j\omega)^* \mathbf{M}_{11}^K(j\omega) \mathbf{M}_{21}^K(j\omega)^* (\Delta K (I - \mathbf{M}_{22}^K(j\omega) \Delta K)^{-1})^* \frac{d\omega}{2\pi} \right)\\
			&=\operatorname{tr}\left(\operatorname{Res}_{\Re (s)< 0}\left((B^\top P_K + R K) (sI - (A+BK))^{-1} \Sigma_K (\Delta K (I - \mathbf{M}_{22}^K(-s) \Delta K)^{-1})^{\top}\right)\right)\\
			&=\operatorname{tr}\left(\operatorname{Res}_{\Re (s)< 0}\left((B^\top P_K + R K) (sI - (A+BK))^{-1} \Sigma_K \Delta K^\top\right)\right)+o(\|\Delta K\|)\\
			&=\frac{1}{2} \operatorname{tr}(\nabla_K f_{LQR}(K)\Delta K^{\top}) + o(\|\Delta K\|),
		\end{aligned}\label{eq_lqr_value}
	\end{equation}
	where the last equality holds because
	\begin{equation*}
		\begin{aligned}
			\Delta K (I - \mathbf{M}_{22}^K(s) \Delta K)^{-1}=\Delta K+\Delta K(I-(I - \mathbf{M}_{22}^K(s) \Delta K)^{-1})
		\end{aligned}
	\end{equation*}
	and 
	\begin{equation*}
		\begin{aligned}
			\|\mathbf{M}_{12}^{K}\Delta K(I-\mathbf{M}_{22}^{K}\Delta K)^{-1}\mathbf{M}_{21}^{K}\|_{\mathcal{H}_2}^2=o(\|\Delta K\|).
		\end{aligned}
	\end{equation*}
	Meanwhile, by following the argument in the proof of Theorem~\ref{main_res}, we establish that $K$ is the optimal solution to problem~\eqref{op_lqr} if and only if 
	\begin{equation}
		(B^\top P_K + R K) (sI - (A+BK))^{-1} \Sigma_K \equiv 0, \quad \forall s\in\mathbb{C}.\label{eq_lqr_in2}
	\end{equation}
	
	Comparing~\eqref{eq_lqr_value} and~\eqref{eq_lqr_in2}, we observe that the stationary point condition $\nabla_K f_{LQR}(K)=0$ and the optimality condition for problem~\eqref{op_lqr} coincide. Specifically, we have
	\begin{equation}
		\begin{aligned}
			\nabla_K f_{LQR}(K) &= 2\operatorname{Res}_{\Re (s)< 0}\left((B^\top P_K + R K) (sI - (A+BK))^{-1} \Sigma_K\right) \\
			&=2 (B^\top P_K + R K)\Sigma_K.
		\end{aligned}\label{eq_nab_Lqr}
	\end{equation}
	Equation~\eqref{eq_nab_Lqr} shows that the gradient of~\eqref{op_lqr} with respect to $K$ is precisely the residue associated with the optimality condition in~\eqref{eq_lqr_value}.
	Since $\Sigma_K \succ 0$, the transfer function in~\eqref{eq_lqr_value} is absolutely controllable. Therefore, \eqref{eq_lqr_value} (and hence the gradient) vanishes if and only if $(B^\top P_K + R K)\Sigma_K = 0$, i.e., the system in~\eqref{eq_lqr_value} is absolutely unobservable.
	Consequently, all the stationary points of problem~\eqref{op_lqr} correspond to the optimal solutions. Moreover, the unique solution is characterized by $B^\top P_K + R K = 0$ together with the Lyapunov equation~\eqref{lqr_lyp}, which completes the proof.
\end{proof}

\begin{remark}\label{rem}
	In the LQR problem, the stationarity condition, $\nabla_K f_{\text{LQR}}(K) = 0$, is equivalent to the global optimality condition given by the frequency-domain identity in \eqref{eq_lqr_in2}. This equivalence is a special property of the LQR problem's structure.
	
	For the general LQG problem, this equivalence no longer holds. The condition for stationarity, $\nabla J(\mathcal{K})=0$, is necessary for optimality but generally not sufficient. This gap between stationarity and optimality can be attributed to two distinct structural properties of the LQG problem.
	
	First, the gradient $\nabla J(\mathcal{K})$ is not equal to the residue itself. The gradient is obtained by projecting the residue onto the subspace of structurally admissible controllers ($\mathbb{K}_q \subseteq \mathbb{R}^{(q+m_1)\times(q+m_2)\mid m_1\times m_2}$). This relationship is formalized by:
	\begin{equation}\label{grad_deg}
		\nabla J(\mathcal{K})=
		2\mathbf{e}_{(q+m_1)\times (q+m_2)}\big(
		\operatorname{Res}_{\Re (s)< 0}
		((\mathcal{C}_1 - \mathcal{B}_1 \mathcal{P}_\mathcal{K})(sI - \mathcal{A})^{-1}
		(\mathcal{B}_0 - \Sigma_\mathcal{K} \mathcal{C}_0)),\, m_1, m_2 \big),
	\end{equation}
	where $\mathbf{e}(\cdot)$ is the projection operator. This projection is the source of the first gap: the gradient can be zero even if the residue is non-zero, provided the residue is orthogonal to the subspace of admissible controller structures.
	
	Second, a zero residue is a weaker condition than global optimality. The global optimality condition requires the full transfer function to be identically zero for all frequencies:
	\begin{equation}\label{eq5}
		(\mathcal{C}_1 - \mathcal{B}_1 \mathcal{P}_\mathcal{K}) (sI - \mathcal{A})^{-1}
		(\mathcal{B}_0 - \Sigma_\mathcal{K} \mathcal{C}_0) \equiv 0, \quad \forall s \in \mathbb{C}.
	\end{equation}
	Unlike the LQR case, the system underlying the LQG problem is generally not absolutely controllable or observable. As a consequence, the vanishing of the residue is not sufficient to ensure that the transfer function in \eqref{eq5} is identically zero.
	
	In summary, two distinct logical steps separate stationarity from global optimality in the LQG problem: (i) the gradient can be zero without the residue being zero (due to controller structure), and (ii) the residue can be zero without the transfer function being identically zero. These two factors explain why stationary points that are not globally optimal can exist in the LQG optimization landscape.
\end{remark}

\section{Optimization method based on the optimality condition}\label{alg}

In the previous section, we identified a fundamental structural flaw in the direct parameterization \eqref{cons_k} of LQG problem. As detailed in Remark~\ref{rem}, essential frequency-domain information required for global optimality is lost in the gradients of traditional, direct parameterizations \eqref{cons_k}. This is not merely a theoretical curiosity; it directly explains why popular gradient-based methods can be trapped at suboptimal points in the LQG setting \cite{pmlr-v144-tang21a,9993305}.

To overcome this limitation, this section develops a new controller synthesis framework that preserves this critical information. Our approach is built upon the Youla parameterization, a central tool that recasts the original non-convex synthesis problem into a convex optimization over an infinite-dimensional space of stable transfer functions (the Youla parameter) \cite{MAHTOUT202081}. Crucially, rather than approximating this problem as a finite-dimensional one, a step that introduces its own unavoidable errors, we design a gradient method that operates directly on this function space.
By working in this infinite-dimensional setting, the Fréchet-gradient-based algorithm we developed retains the full frequency-domain structure of the problem. This allows us to establish the global convergence guarantees, thereby addressing the principal weaknesses of existing methods. Our analysis in this section proceeds under the standard assumption that all the system parameters ($\mathcal{A}$, $\mathcal{B}_1$, $\mathcal{C}_1$, $\mathcal{B}_0$, $\mathcal{C}_0$, $\mathcal{P}_\mathcal{K}$, and $\Sigma_\mathcal{K}$) are known.
\subsection{Optimization over the Space $\mathcal{RH}_\infty$}

%In the previous section, we compared the optimality conditions for LQG and LQR, revealing two main differences (see Remark~\ref{rem}): (i) the restriction on the feasible set (see~\eqref{grad_deg}); and (ii) the loss of frequency-domain information in the gradient for problem~\eqref{op_lqg} (see~\eqref{grad_deg} and \eqref{eq5}). These differences are not merely theoretical: they have a direct impact on the performance of practical algorithms. In particular, prior work on finite-dimensional controller parameterizations and policy gradient methods generally lacks convergence guarantees for the LQG problem; for example, recent studies have shown that vanilla policy gradient algorithms may fail to converge or become trapped at non-optimal stationary points in the LQG setting (see, e.g., \cite{pmlr-v144-tang21a,9993305}). 
%By establishing necessary and sufficient optimality condition in the frequency domain and comparing the optimality condition for the LQG problem with LQR, we reveal that the gradient term in standard gradient-based methods may lose essential information required for global optimality in LQG problem(see Remark \ref{rem} for details).
%
%Building on this theoretical foundation, we develop a Fréchet-differential framework that reformulates controller optimization as an infinite-dimensional problem. Leveraging the frequency-domain properties preserved in this framework, we design a new algorithm with provable global convergence guarantees, thereby overcoming the principal limitations of existing finite-dimensional approaches (see Remark~\ref{rem} for details).

To overcome the limitations of feasible set in problem \eqref{op_lqg} and recover the frequency-domain information in optimality condition~\eqref{value}, we consider controllers in an infinite-dimensional function space. Specifically, given a fixed initial controller $\mathcal{K}\in\mathbb{K}_q$, we define an enlarged controller set~$\mathbb{K}$, consisting of all the elements of form $\Delta\mathcal{K} = \widetilde{\mathcal{K}} - \mathcal{K}$ for $\widetilde{\mathcal{K}}\in\mathbb{K}_q$, a translated copy of~$\mathbb{K}_q$ centered at $\mathcal{K}$. More precisely,
% Preamble should include \usepackage{amsmath} and \usepackage{amssymb}
\begin{equation}
	\label{set_new}
	\mathbb{K} := \left\{ \Delta \mathcal{K}_1(s)+\Delta \mathcal{K}_2 \;\middle|\;
	\begin{array}{@{}l@{}}  % Use array for multi-line conditions
		\Delta \mathcal{K}_1(s)\in \mathcal{RH}_{\infty,0}^{(m_1+q)\times (q+m_2)}\cup \{0_{(m_1+q)\times (q+m_2)}\}, \\
		\Delta \mathcal{K}_2\in \mathbb{R}^{(q+m_1)\times(q+m_2)\mid m_1\times m_2}, \\
		\left((\Delta \mathcal{K}_1(s)+\Delta \mathcal{K}_2)(I - \mathbf{M}_{22}^\mathcal{K} (\Delta \mathcal{K}_1(s)+\Delta \mathcal{K}_2))^{-1}\right)\ \text{is stable}
	\end{array}
	\right\},
\end{equation}
where $\mathcal{RH}_{\infty,0}^{(m_1+q)\times (q+m_2)}$ denotes the subset of $\mathcal{RH}_{\infty,0}$ consisting of transfer matrices with $m_1+q$ outputs and $q+m_2$ inputs.
This expanded set provides a broader search space for controller synthesis, potentially admitting novel solutions outside the original finite-dimensional set~$\mathbb{K}_q$. Importantly, as seen from~\eqref{eq:lft_transfer}, for any $\mathcal{K}' \in \mathbb{K}_q$, the difference $\Delta\mathbf{K} = \mathcal{K}' - \mathcal{K}$ satisfies $\Delta\mathbf{K} \in \mathbb{K}$. Therefore, the translated copy of $\mathbb{K}_q$, given by $\{\mathcal{K}' - \mathcal{K}\mid \mathcal{K}'\in\mathbb{K}_q\}$, is included in the expanded set $\mathbb{K}$. As a result, the classic optimal LQG controller (as defined in~\eqref{opt_lqg}) is still attained—up to translation—within the enlarged optimization domain, and thus the optimal value of the expanded problem coincides with that of the original problem~\eqref{op_lqg}.

If we directly substitute $\Delta\mathbf{K}\in\mathbb{K}$ into the original performance criterion as in problem~\eqref{eq:h2_problem},
\begin{equation*}
	J_\mathcal{K}(\Delta\mathbf{K}) = \|\mathbf{M}_{11}^\mathcal{K} + \mathbf{M}_{12}^\mathcal{K} \Delta\mathbf{K} (I - \mathbf{M}_{22}^\mathcal{K} \Delta\mathbf{K})^{-1} \mathbf{M}_{21}^\mathcal{K}\|_{\mathcal{H}_2}^2,
\end{equation*}
which leads to a nonconvex optimization problem in $\Delta\mathbf{K}$.
To resolve this, we use the Youla parameterization to reformulate the controller in $\mathbb{K}$, resulting in a convex problem in a new parameter space. For any $\Delta\mathbf{K}\in\mathbb{K}$ and a fixed initial controller $\mathcal{K}\in\mathbb{K}_q$, there exists a unique pair $(\mathbf{Q}, Q)$ in the set $\mathbb{U}$ such that
\[
\Delta\mathbf{K}(I-\mathbf{M}_{22}^{\mathcal{K}}\Delta\mathbf{K})^{-1} = \mathbf{Q} + Q,
\]
where the set $\mathbb{U}$ is defined as
\begin{equation}
	\label{eq:set_U_pretty}
	\begin{aligned}
		\mathbb{U} :=& \left\{ (\mathbf{Q}(s), Q) \;\middle|\;
		\begin{array}{@{}l@{}}  % Use array for multi-line conditions
			\mathbf{Q}(s)\in\mathcal{RH}_{\infty,0}^{(m_1+q)\times (q+m_2)}\cup\{ 0_{{q+m_1}\times (q+m_2)}\}, \\
			Q \in \mathbb{R}^{(q+m_1)\times(q+m_2)\mid m_1\times m_2}\cup\{0_{(q+m_1)\times (q+m_2)}\}
		\end{array}
		\right\} \\
	\end{aligned}.
\end{equation}
{The following lemma formalizes the above one-to-one correspondence.}

\begin{lemma}\label{lem_rea}
	Let $\mathcal{K} \in \mathbb{K}_q$, and $\forall \Delta\mathbf{K} \in\mathbb{K}$. % <--- 此处不变
	Then, the following assertions hold
	\begin{enumerate}
		\item there exists a unique element $(\mathbf{Q}(\Delta\mathbf{K}), Q(\Delta\mathbf{K})) \in \mathbb{U}$ such that $\Delta\mathbf{K}(I - \mathbf{M}_{22}^\mathcal{K}\Delta\mathbf{K})^{-1} = \mathbf{Q}(\Delta\mathbf{K}) + Q(\Delta\mathbf{K})$.
		\item Conversely, for any $(\mathbf{Q}, Q) \in \mathbb{U}$, there exists a unique $\Delta\mathbf{K}\in\mathbb{K}$ such that $\Delta \mathbf{K} = (I+(\mathbf{Q} + Q)\mathbf{M}_{22}^\mathcal{K})^{-1}(\mathbf{Q} + Q)$.
		\item Letting $\mathcal{K}'\in\mathbb{K}_q$ and $\Delta\mathbf{K}=\mathcal{K}'-\mathcal{K}\in\mathbb{K}$, it holds that $Q(\Delta\mathbf{K}) = \Delta\mathbf{K}$ and 
		\begin{equation*}
			\mathbf{Q}(\Delta\mathbf{K})(s) = \Delta\mathbf{K} 
			\begin{bmatrix}
				C & 0 \\
				0 & I
			\end{bmatrix}
			(sI -
			\begin{bmatrix}
				A & B C_{\mathcal{K}'} \\
				B_{\mathcal{K}'} C & A_{\mathcal{K}'}
			\end{bmatrix})^{-1}
			\begin{bmatrix}
				B & 0 \\
				0 & I
			\end{bmatrix}
			\Delta\mathbf{K}.
		\end{equation*}
	\end{enumerate}
\end{lemma}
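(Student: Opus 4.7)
The plan is to prove the three assertions in order, exploiting two structural features of $\mathbf{M}_{22}^\mathcal{K}(s)$: (i) it is strictly proper, since its state-space realization $\bigl(\mathcal{A},\bigl[\begin{smallmatrix}B&0\\0&I\end{smallmatrix}\bigr],\bigl[\begin{smallmatrix}C&0\\0&I\end{smallmatrix}\bigr],0\bigr)$ has no direct feedthrough term, and (ii) it is stable, since $\mathcal{K}\in\mathbb{K}_q$ makes $\mathcal{A}$ Hurwitz. Together with the fact that any proper rational transfer matrix decomposes uniquely as strictly-proper plus constant, these observations drive the entire argument.

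For Assertion 1, I would argue as follows. Because $\mathbf{M}_{22}^\mathcal{K}$ is strictly proper, the product $\mathbf{M}_{22}^\mathcal{K}\Delta\mathbf{K}$ is strictly proper, so $(I-\mathbf{M}_{22}^\mathcal{K}\Delta\mathbf{K})^{-1}$ is a proper rational matrix with direct feedthrough $I$. Hence $F:=\Delta\mathbf{K}(I-\mathbf{M}_{22}^\mathcal{K}\Delta\mathbf{K})^{-1}$ is proper with $F(\infty)=\Delta\mathbf{K}(\infty)=\Delta\mathcal{K}_2$, and $F$ is stable by the closed-loop stability condition built into the definition of $\mathbb{K}$. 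I then define $Q(\Delta\mathbf{K}):=\Delta\mathcal{K}_2=F(\infty)$, which automatically carries the required zero top-left block, and $\mathbf{Q}(\Delta\mathbf{K}):=F-Q(\Delta\mathbf{K})$, which is stable and strictly proper, hence in $\mathcal{RH}_{\infty,0}$. Uniqueness is immediate: any alternative decomposition would differ by a function that is simultaneously constant and strictly proper, and therefore identically zero.

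For Assertion 2, I would invert the map algebraically. Given $F:=\mathbf{Q}+Q$ with $(\mathbf{Q},Q)\in\mathbb{U}$, the equation $\Delta\mathbf{K}(I-\mathbf{M}_{22}^\mathcal{K}\Delta\mathbf{K})^{-1}=F$ rearranges to $\Delta\mathbf{K}=F-F\mathbf{M}_{22}^\mathcal{K}\Delta\mathbf{K}$, i.e., $(I+F\mathbf{M}_{22}^\mathcal{K})\Delta\mathbf{K}=F$; strict properness of $F\mathbf{M}_{22}^\mathcal{K}$ makes $(I+F\mathbf{M}_{22}^\mathcal{K})$ a proper matrix with direct feedthrough $I$, hence invertible, yielding the unique formula $\Delta\mathbf{K}=(I+F\mathbf{M}_{22}^\mathcal{K})^{-1}F$. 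Evaluating at infinity recovers $\Delta\mathbf{K}(\infty)=Q$, giving the constant component $\Delta\mathcal{K}_2:=Q$ with the correct zero-block structure, while the closed-loop stability condition holds tautologically since $\Delta\mathbf{K}(I-\mathbf{M}_{22}^\mathcal{K}\Delta\mathbf{K})^{-1}=F\in\mathcal{RH}_\infty$ by construction.

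For Assertion 3, the specialization $\Delta\mathbf{K}=\mathcal{K}'-\mathcal{K}$ is a constant matrix with zero top-left block, so Assertion 1 gives $Q(\Delta\mathbf{K})=\Delta\mathbf{K}$ directly. For $\mathbf{Q}(\Delta\mathbf{K})$ I would perform a state-space computation: interconnecting the realization of $\mathbf{M}_{22}^\mathcal{K}$ with the static gain $\Delta\mathbf{K}$ in the standard way, the closed-loop state matrix becomes $\mathcal{A}+\bigl[\begin{smallmatrix}B&0\\0&I\end{smallmatrix}\bigr]\Delta\mathbf{K}\bigl[\begin{smallmatrix}C&0\\0&I\end{smallmatrix}\bigr]$; a direct block expansion shows this equals $\bigl[\begin{smallmatrix}A&BC_{\mathcal{K}'}\\B_{\mathcal{K}'}C&A_{\mathcal{K}'}\end{smallmatrix}\bigr]$, the closed-loop matrix induced by $\mathcal{K}'$. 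The resulting realization of $F$ has direct feedthrough $\Delta\mathbf{K}$, and subtracting it leaves precisely the strictly-proper state-space expression claimed for $\mathbf{Q}(\Delta\mathbf{K})$.

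The main obstacle I expect is the subtle part of Assertion 2: namely, verifying that the strictly-proper remainder $\Delta\mathbf{K}-Q$ is genuinely stable (and thus lies in $\mathcal{RH}_{\infty,0}$), rather than merely strictly proper, since $(I+F\mathbf{M}_{22}^\mathcal{K})^{-1}$ is not automatically stable for an arbitrary stable $F$. Resolving this cleanly requires an explicit state-space realization of $\Delta\mathbf{K}$ whose state matrix can be shown to be Hurwitz—leveraging the assumption that $\mathcal{K}$ stabilizes the plant together with the stability of $F$—which is essentially the stability-preservation property underlying the Youla parameterization of an already-stabilized plant.
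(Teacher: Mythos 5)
Your handling of Assertions 1 and 3 is correct and in fact supplies details the paper leaves implicit: the paper disposes of Assertions 1 and 2 with a one-line appeal to the Youla parameterization, and for Assertion 3 it performs exactly the state-space interconnection you describe (the closed-loop state matrix collapsing to the one induced by $\mathcal{K}'$, with direct feedthrough $\Delta\mathbf{K}$). Your strictly-proper-plus-constant decomposition argument for existence and uniqueness in Assertion 1 is sound, since $\mathbf{M}_{22}^{\mathcal K}$ has no feedthrough and stability of $F=\Delta\mathbf{K}(I-\mathbf{M}_{22}^{\mathcal K}\Delta\mathbf{K})^{-1}$ is built into the definition \eqref{set_new}.

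The genuine gap is the one you flag yourself at the end, in Assertion 2, and it is not cosmetic. The definition \eqref{set_new} of $\mathbb{K}$ requires the strictly proper part $\Delta\mathcal{K}_1$ of $\Delta\mathbf{K}$ to lie in $\mathcal{RH}_{\infty,0}$, i.e.\ to be \emph{stable}. The formula $\Delta\mathbf{K}=(I+F\mathbf{M}_{22}^{\mathcal K})^{-1}F$ only guarantees properness and the tautological stability of the closed-loop map $F$; it does not guarantee stability of $\Delta\mathbf{K}$ itself. For a stable plant, the Youla parameterization ensures internal stability of the closed loop, not stability of the controller: already in the scalar caricature $\mathbf{M}_{22}=1/(s+1)$ with the constant parameter $F=-2$, one gets $\Delta\mathbf{K}=-2(s+1)/(s-1)$, whose strictly proper part has a pole at $s=1$ and hence is not in $\mathcal{RH}_{\infty,0}$, even though $\Delta\mathbf{K}(I-\mathbf{M}_{22}\Delta\mathbf{K})^{-1}=-2$ is stable. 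So the step ``evaluate at infinity to recover $Q$ and conclude membership in $\mathbb{K}$'' does not close Assertion 2: either one must prove stability of $(I+F\mathbf{M}_{22}^{\mathcal K})^{-1}F$ for every $(\mathbf{Q},Q)$ in $\mathbb{U}$ as defined in \eqref{eq:set_U_pretty} (which the example indicates is false in general), or the correspondence must be restricted, or the definition of $\mathbb{K}$ must be read as imposing only the closed-loop stability condition. The paper's own proof does not resolve this either---it is absorbed into the citation of the Youla parameterization---so your proposal is no less rigorous than the source on this point, but the obstacle you name is real and must be addressed for Assertion 2 to hold as stated.
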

\begin{proof}
	The first and second conclusions can be verified by Youla parametrization~\cite{1101223}. Since both $\mathbb{K}_q$ and $\mathbb{K}$ are defined as sets of controllers that ensure internal stability of the closed-loop system, the invertibility of all the relevant matrices (such as $I - \mathbf{M}_{22}^\mathcal{K}\Delta\mathbf{K}$ and $I + (\mathbf{Q} + Q)\mathbf{M}_{22}^\mathcal{K}$) is guaranteed by construction.
	
	For the third conclusion, note that $\Delta\mathbf{K}(I - \mathbf{M}_{22}^\mathcal{K}\Delta\mathbf{K})^{-1}$ has a realization:
	\begin{equation}
		\begin{aligned}
			\dot{\xi}_t = 
			\begin{bmatrix}
				A & B C_{\mathcal{K}'} \\
				B_{\mathcal{K}'} C & A_{\mathcal{K}'}
			\end{bmatrix}
			\xi_t + 
			\begin{bmatrix}
				B & 0 \\
				0 & I
			\end{bmatrix}\Delta\mathbf{K}
			u_t,\ 
			y_t = 
			\Delta\mathbf{K}\begin{bmatrix}
				C & 0 \\
				0 & I
			\end{bmatrix}\xi_t + 
			\Delta \mathbf{K} u_t
		\end{aligned}
	\end{equation}
	with $\mathcal{K}'=\mathcal{K}+\Delta\mathbf{K}$. Based on this realization, we let
	\begin{equation}
		\begin{aligned}
			\mathbf{Q}(\Delta\mathbf{K})(s)=\Delta\mathbf{K}\begin{bmatrix}
				C & 0 \\
				0 & I
			\end{bmatrix}(sI-\begin{bmatrix}
				A & B C_{\mathcal{K}'} \\
				B_{\mathcal{K}'} C & A_{\mathcal{K}'}
			\end{bmatrix})^{-1}\begin{bmatrix}
				B & 0 \\
				0 & I
			\end{bmatrix}\Delta\mathbf{K},\ 
			Q(\Delta\mathbf{K})=\Delta\mathbf{K}.
		\end{aligned}
	\end{equation}
	This completes the proof.
\end{proof}

\begin{remark}
	All the inverses in Lemma~\ref{lem_rea} are well-defined, since both $\mathbb{K}_q$ and $\mathbb{K}$ are defined as sets of controllers stabilizing system \eqref{op_lqg}. As a result, the relevant closed-loop transfer matrices are invertible for all $\mathcal{K}$ and $\Delta\mathbf{K}$ under consideration.
\end{remark}
Building on the above translation, we lift the feasible set of the original controller optimization problem~\eqref{op_lqg} to an infinite-dimensional parameter space $\mathbb{U}$ associated with an arbitrary initial controller $\mathcal{K} \in \mathbb{K}_q$. Crucially, by the mapping described in Lemma~\ref{lem_rea}, the closed-loop term $\Delta\mathbf{K}(I-\mathbf{M}_{22}^{\mathcal{K}}\Delta\mathbf{K})^{-1}$ in the objective can be represented as $(\mathbf{Q} + Q)$. Replacing this term in the cost functional of problem~\eqref{op_lqg} yields the following equivalent performance index:
\[
J_{\mathcal{K}}(\mathbf{Q}, Q) := \left\| \mathbf{M}_{11}^\mathcal{K} 
+ \mathbf{M}_{12}^\mathcal{K} (\mathbf{Q} + Q) \mathbf{M}_{21}^\mathcal{K}
\right\|_{\mathcal{H}_2}^2, \qquad \forall\, (\mathbf{Q}, Q) \in\mathbb{U}.
\]
It is this new objective function that forms the focus of our subsequent analysis. Notably, the original nonlinear term in the objective is now replaced by a linear parameterization in the convex set $\mathbb{U}$, rendering the problem convex. This convexity permits the application of efficient convex optimization methods and provides a foundation for the theoretical development that follows.

To facilitate the analysis, we introduce the following inner product on $\mathbb{U}$:
\begin{equation}
	\langle (\mathbf{Q}, Q), (\mathbf{Q}', Q') \rangle_{\mathbb{U}} := \langle \mathbf{Q}, \mathbf{Q}' \rangle_{\mathcal{H}_2} + \langle Q, Q' \rangle_F\label{inner}
\end{equation}
with the corresponding norm
\begin{equation}
	\|(\mathbf{Q}, Q)\|_{\mathbb{U}} := \sqrt{ \|\mathbf{Q}\|_{\mathcal{H}_2}^2 + \|Q\|_F^2 }\label{norm}
\end{equation}
We also define an operator $\mathcal{S}[\cdot]$ that extracts the stable part of any transfer function matrix. That is, for any transfer matrix $\mathbf{N}(s)$, we have a unique decomposition $\mathbf{N}(s) = \mathbf{N}_1(s) + \mathbf{N}_2(s)$ (see Sec 3 in \cite{1995Robust} for details), where $\mathbf{N}_1(s)$ is stable and $\mathbf{N}_2(s)$ is anti-stable, and then we set $\mathcal{S}[\mathbf{N}(s)] := \mathbf{N}_1(s)$.

The enlargement from $\mathbb{K}_q$ to $\mathbb{U}$ fundamentally resolves the two main limitations discussed in Remark~\ref{rem}. First, by extending the parameter space to the infinite-dimensional set $\mathbb{U}$, all the structural restrictions on the feasible set are removed so that arbitrary search directions can be explored. Second, $J_\mathcal{K}$ becomes convex over $\mathbb{U}$, and thus any stationary point is globally optimal. In particular, as shown in the following theorem, the gradient of $J_\mathcal{K}(\mathbf{Q},Q)$ contains a transfer function matrix that precisely recovers the frequency-domain optimality condition characterized previously (see Theorem~\ref{main_res}). Consequently, the essential frequency-domain information is fully preserved throughout the optimization process.
With notations \eqref{inner} and \eqref{norm}, we can analyze the gradient, smoothness, and other optimization properties of $J_{\mathcal{K}}(\mathbf{Q},Q)$ in the convex and structurally unconstrained parameter space $\mathbb{U}$.

\begin{theorem}\label{thm:gradient}
	Let $\mathcal{K} \in \mathbb{K}_q$ be a given initial controller. Then, the Fréchet derivative of the cost functional $J_{\mathcal{K}}$ with respect to $(\mathbf{Q}, Q) \in \mathbb{U}$ is given by
	\[
	\nabla J_{\mathcal{K}}(\mathbf{Q}, Q) = 2\left( \mathbf{S},\; \mathbf{e}_{(q+m_1)\times(q+m_2)}\left(\operatorname{Res}_{\Re (s)< 0}\bigl(\mathbf{S}\bigr),m_1,m_2\right) \right),
	\]
	where
	\begin{equation*}
		\mathbf{S}(s) = \mathcal{S}\Bigl[(\mathcal{C}_1 - \mathcal{B}_1 \mathcal{P}_{\mathcal{K}} )( sI - \mathcal{A} )^{-1} ( \mathcal{B}_0 - \Sigma_{\mathcal{K}} \mathcal{C}_0 )+ ( \mathbf{M}^{\mathcal{K}}_{12} (-s))^\top \mathbf{M}^{\mathcal{K}}_{12} ( \mathbf{Q}(s) + Q) \mathbf{M}_{21}^{\mathcal{K}}(s) ( \mathbf{M}_{21}^{\mathcal{K}}(-s) )^\top \Bigr].
	\end{equation*}
\end{theorem}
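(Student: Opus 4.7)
The plan is to compute the Fréchet derivative directly from a first-order expansion of $J_\mathcal{K}(\mathbf{Q}+\delta\mathbf{Q},Q+\delta Q)$. Writing $G(s):=\mathbf{M}_{11}^\mathcal{K}(s)+\mathbf{M}_{12}^\mathcal{K}(s)(\mathbf{Q}(s)+Q)\mathbf{M}_{21}^\mathcal{K}(s)$, the squared $\mathcal{H}_2$ norm expands to
\[
J_\mathcal{K}(\mathbf{Q}+\delta\mathbf{Q},Q+\delta Q)-J_\mathcal{K}(\mathbf{Q},Q)=2\langle G,\,\mathbf{M}_{12}^\mathcal{K}(\delta\mathbf{Q}+\delta Q)\mathbf{M}_{21}^\mathcal{K}\rangle_{\mathcal{H}_2}+O\!\left(\|(\delta\mathbf{Q},\delta Q)\|_{\mathbb{U}}^2\right).
\]
Using trace cyclicity together with the identity $f^*(j\omega)=f(-s)^\top|_{s=j\omega}$ valid for real-rational $f$, I will move $\mathbf{M}_{12}^\mathcal{K}$ and $\mathbf{M}_{21}^\mathcal{K}$ across the inner product and introduce the companion transfer matrix
\[
\mathbf{W}(s):=(\mathbf{M}_{12}^\mathcal{K}(-s))^\top G(s)(\mathbf{M}_{21}^\mathcal{K}(-s))^\top,
\]
recasting the first-order term as $2\int_{-\infty}^{\infty}\operatorname{tr}\!\bigl((\delta\mathbf{Q}(j\omega)+\delta Q)^*\mathbf{W}(j\omega)\bigr)\frac{d\omega}{2\pi}$. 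Because the structural constraint $Q\in\mathbb{R}^{(q+m_1)\times(q+m_2)\mid m_1\times m_2}$ forces the feedthrough $\mathbf{M}_{12}^\mathcal{K}(\infty)Q\mathbf{M}_{21}^\mathcal{K}(\infty)$ to vanish, the closed-loop $G$ is strictly proper and $\mathbf{W}$ is sufficiently well-behaved at infinity for the subsequent contour arguments.

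Next I will split this first-order functional into the two coordinate projections on $\mathbb{U}$. For the $\delta\mathbf{Q}\in\mathcal{RH}_{\infty,0}$ direction, Lemma~\ref{res} yields that the $\mathcal{H}_2$ pairing of a stable strictly proper function with an anti-stable function vanishes, so only the stable part of $\mathbf{W}$ contributes:
\[
\int_{-\infty}^{\infty}\operatorname{tr}\!\bigl(\delta\mathbf{Q}(j\omega)^*\mathbf{W}(j\omega)\bigr)\frac{d\omega}{2\pi}=\langle\delta\mathbf{Q},\,\mathcal{S}[\mathbf{W}]\rangle_{\mathcal{H}_2}.
\]
To exhibit $\mathcal{S}[\mathbf{W}]$ in closed form, I decompose $G=\mathbf{M}_{11}^\mathcal{K}+\mathbf{M}_{12}^\mathcal{K}(\mathbf{Q}+Q)\mathbf{M}_{21}^\mathcal{K}$ and invoke the state-space manipulation already carried out in the proof of Theorem~\ref{main_res}: that argument shows that the stable part of $(\mathbf{M}_{12}^\mathcal{K}(-s))^\top \mathbf{M}_{11}^\mathcal{K}(s)(\mathbf{M}_{21}^\mathcal{K}(-s))^\top$ is precisely $(\mathcal{C}_1-\mathcal{B}_1\mathcal{P}_\mathcal{K})(sI-\mathcal{A})^{-1}(\mathcal{B}_0-\Sigma_\mathcal{K}\mathcal{C}_0)$. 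Combining this with the remaining quadratic piece $(\mathbf{M}_{12}^\mathcal{K}(-s))^\top\mathbf{M}_{12}^\mathcal{K}(s)(\mathbf{Q}(s)+Q)\mathbf{M}_{21}^\mathcal{K}(s)(\mathbf{M}_{21}^\mathcal{K}(-s))^\top$ inside $\mathcal{S}[\cdot]$ reproduces exactly the $\mathbf{S}(s)$ in the theorem statement.

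For the $\delta Q$ direction, trace cyclicity pulls the constant matrix out of the integral to yield $2\bigl\langle\delta Q,\int_{-\infty}^{\infty}\mathbf{W}(j\omega)\frac{d\omega}{2\pi}\bigr\rangle_F$. Evaluating this integral by closing the contour in $\{\Re s\leq 0\}$ gives $\operatorname{Res}_{\Re s\leq 0}\mathbf{W}(s)=\operatorname{Res}_{\Re s\leq 0}\mathbf{S}(s)$, since the anti-stable complement $\mathbf{W}-\mathbf{S}$ has no poles in the closed left half-plane. Because admissible $\delta Q$ is confined to the linear subspace $\mathbb{R}^{(q+m_1)\times(q+m_2)\mid m_1\times m_2}$, the Fréchet derivative along this coordinate is the Frobenius-orthogonal projection of the above residue onto that subspace, which is realized exactly by the operator $\mathbf{e}_{(q+m_1)\times(q+m_2)}(\cdot,m_1,m_2)$ introduced in the notation. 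Reading off both coordinates through the inner product \eqref{inner} produces the factor of $2$ and the claimed gradient formula.

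The principal technical care point is the contour step: each factor in $\mathbf{W}$ is only proper, so $\mathbf{W}$ decays only as $O(1/s)$ at infinity and the scalar integral exists only as a Cauchy principal value. The key observation making the argument go through is that the leading $1/s$ coefficient of $\mathbf{W}$ is supported precisely in the top-left $m_1\times m_2$ block—the block in which $\delta Q$ is forced to be zero by the definition of $\mathbb{U}$—so the problematic tail is annihilated by the Frobenius pairing with any admissible $\delta Q$, leaving an absolutely convergent integrand of order $O(1/s^2)$ that equals the residue sum in the left half-plane. Once this is verified, the remaining steps—splitting $\mathbf{W}=\mathcal{S}[\mathbf{W}]+(\text{anti-stable})$, importing the stable-part identification from Theorem~\ref{main_res}, and recognising $\mathbf{e}_{(q+m_1)\times(q+m_2)}(\cdot,m_1,m_2)$ as the orthogonal projector—are essentially bookkeeping.
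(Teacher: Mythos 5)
Your proposal is correct and follows essentially the same route as the paper's proof: a first-order expansion of the squared $\mathcal{H}_2$ norm, trace cyclicity to form $(\mathbf{M}_{12}^{\mathcal{K}}(-s))^\top G(s)(\mathbf{M}_{21}^{\mathcal{K}}(-s))^\top$, Lemma~\ref{res} to isolate the stable part (whose $\mathbf{M}_{11}^{\mathcal{K}}$ contribution is identified via the state-space manipulation from Theorem~\ref{main_res}), and the projection $\mathbf{e}_{(q+m_1)\times(q+m_2)}(\cdot,m_1,m_2)$ to account for the structural constraint on $\delta Q$. Your explicit verification that the feedthrough $D_{12}QD_{21}$ vanishes and that the $O(1/s)$ tail of the integrand is supported in the top-left $m_1\times m_2$ block is a welcome piece of extra care that the paper's proof leaves implicit.
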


\begin{proof}
	Consider the perturbed cost functional with variation $(\Delta \mathbf{Q},\Delta Q)$
	\begin{align*}
		J_{\mathcal{K}}(\mathbf{Q}+\Delta \mathbf{Q},Q+\Delta Q) =& \| \mathbf{M}_{11}^{\mathcal{K}} + \mathbf{M}_{12}^{\mathcal{K}}(\mathbf{Q}(s)+Q + \Delta \mathbf{Q}+\Delta Q)\mathbf{M}_{21}^{\mathcal{K}_0} \|_{\mathcal{H}_2}^2 \\
		=& \| \mathbf{M}_{11}^{\mathcal{K}} + \mathbf{M}_{12}^{\mathcal{K}}(\mathbf{Q}+Q)\mathbf{M}_{21}^{\mathcal{K}} \|_{\mathcal{H}_2}^2\!\\
		& +\! 2\int_{-\infty}^{\infty}\!\! \operatorname{tr}\Big[ \Bigl( \mathbf{M}_{11}^{\mathcal{K}}(j\omega)\! +\! \mathbf{M}_{12}^{\mathcal{K}}(j\omega)(\mathbf{Q}(j\omega)+Q)\mathbf{M}_{21}^{\mathcal{K}}(j\omega) \Bigr)^* \mathbf{M}_{12}^{\mathcal{K}}(j\omega) \Delta \mathbf{Q}(j\omega)\\ &\times\mathbf{M}_{21}^{\mathcal{K}}(j\omega) \Big]\frac{d\omega}{2\pi}+2\operatorname{tr}(\operatorname{Res}_{\Re (s)< 0}(\mathbf{S}(s))\Delta Q^\top)+\mathcal{O}(\|(\Delta \mathbf{Q},\Delta Q)\|^2_\mathbb{U}).
	\end{align*}
	The first-order term can be rewritten as
	\begin{align*}
		&2\int_{-\infty}^{\infty} \operatorname{tr} \Bigl( \mathbf{M}_{21}^{\mathcal{K}}(j\omega)^* \Delta \mathbf{Q}(j\omega)^* \mathbf{M}_{12}^{\mathcal{K}}(j\omega)^* \bigl( \mathbf{M}_{11}^{\mathcal{K}}(j\omega) + \mathbf{M}_{12}^{\mathcal{K}}(j\omega)(\mathbf{Q}(j\omega)+Q)\mathbf{M}_{21}^{\mathcal{K}}(j\omega) \bigr) \Bigr) \frac{d\omega}{2\pi}\\
		&+2\operatorname{tr}(\operatorname{Res}_{\Re (s)< 0}(\mathbf{S}(s))\Delta Q^\top).
	\end{align*}
	By Lemma \ref{res}, this expression simplifies to
	\begin{align*}
		&2\int_{-\infty}^{\infty} \operatorname{tr}\left[ \Bigl( \mathbf{S}(j\omega)^* \Delta (\mathbf{Q}(j\omega)) \right] \frac{d\omega}{2\pi} +2\operatorname{tr}(\operatorname{Res}_{\Re (s)< 0}(\mathbf{S}(s))\Delta Q^{\top}).
	\end{align*}
	Noting $\Delta Q\in\mathbb{R}^{(q+m_1)\times(q+m_2)\mid m_1\times m_2}$, we can get
	\begin{equation*}
		\begin{aligned}
			&2\int_{-\infty}^{\infty} \operatorname{tr}\left[ \Bigl( \mathbf{S}(j\omega)^* \Delta (\mathbf{Q}(j\omega)) \right] \frac{d\omega}{2\pi} +2\operatorname{tr}(\operatorname{Res}_{\Re (s)< 0}(\mathbf{S}(s))\Delta Q^{\top})\\
			=&2\int_{-\infty}^{\infty} \operatorname{tr}\left[ \Bigl( \mathbf{S}(j\omega)^* \Delta (\mathbf{Q}(j\omega)) \right] \frac{d\omega}{2\pi} +2\operatorname{tr}(\mathbf{e}_{(q+m_1)\times(q+m_2)}\left(\operatorname{Res}_{\Re (s)< 0}\bigl(\mathbf{S}(s)\bigr),m_1,m_2\right)\Delta Q^{\top}).
		\end{aligned}
	\end{equation*}
	The result follows from the definition of the stable projection $\mathcal{S}[\cdot]$. 
\end{proof}

\begin{lemma}\label{ass:regularity}
	(Lipschitz smoothness of $J_{\mathcal{K}}$)
	Let $\mathcal{K}$ be any stabilizing controller of system~\eqref{op_lqg}. Then,  the cost functional $J_{\mathcal{K}}(\mathbf{Q}, Q)$ is Lipschitz smooth with respect to $(\mathbf{Q}, Q)$. Specifically, there exists a constant $L_{\mathcal{K}} > 0$, which depends only on the controller $\mathcal{K}$, such that for any pair $(\mathbf{Q}, Q), (\mathbf{Q}', Q') \in \mathbb{U}$, the following inequality holds:
	\begin{equation}\label{eq:lipschitz_smoothness}
		\begin{aligned}
			J_{\mathcal{K}}(\mathbf{Q}, Q) \leq\, & J_{\mathcal{K}}(\mathbf{Q}', Q')  
			+ \langle \nabla J_\mathcal{K}(\mathbf{Q}',Q'), (\mathbf{Q}-\mathbf{Q}',Q-Q') \rangle_{\mathbb{U}} \\
			&+ \frac{L_{\mathcal{K}}}{2} \left( \| \mathbf{Q} - \mathbf{Q}' \|_{\mathcal{H}_2}^2 + \| Q - Q' \|_F^2 \right),
		\end{aligned}
	\end{equation}
	where $\nabla J_\mathcal{K}(\mathbf{Q}',Q')$ denotes the gradient evaluated at $(\mathbf{Q}', Q')$.
\end{lemma}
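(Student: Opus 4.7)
The plan is to exploit the fact that $J_\mathcal{K}$ is a quadratic functional in $(\mathbf{Q},Q)$, so the Lipschitz smoothness inequality reduces to a bound on a single bilinear remainder. First I would write the exact Taylor identity about $(\mathbf{Q}',Q')$. Setting $\Delta\mathbf{Q}:=\mathbf{Q}-\mathbf{Q}'$ and $\Delta Q:=Q-Q'$, the affine dependence of $\mathbf{M}_{11}^\mathcal{K}+\mathbf{M}_{12}^\mathcal{K}(\mathbf{Q}+Q)\mathbf{M}_{21}^\mathcal{K}$ on $(\mathbf{Q},Q)$, combined with the identification of the first-order term via Theorem~\ref{thm:gradient}, yields
\[
J_\mathcal{K}(\mathbf{Q},Q)=J_\mathcal{K}(\mathbf{Q}',Q')+\langle\nabla J_\mathcal{K}(\mathbf{Q}',Q'),(\Delta\mathbf{Q},\Delta Q)\rangle_{\mathbb{U}}+\bigl\|\mathbf{M}_{12}^\mathcal{K}(\Delta\mathbf{Q}+\Delta Q)\mathbf{M}_{21}^\mathcal{K}\bigr\|_{\mathcal{H}_2}^2
\]
with \emph{no higher-order remainder}. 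Thus \eqref{eq:lipschitz_smoothness} is equivalent to the operator-type bound
$\|\mathbf{M}_{12}^\mathcal{K}(\Delta\mathbf{Q}+\Delta Q)\mathbf{M}_{21}^\mathcal{K}\|_{\mathcal{H}_2}^2\le\tfrac{L_\mathcal{K}}{2}(\|\Delta\mathbf{Q}\|_{\mathcal{H}_2}^2+\|\Delta Q\|_F^2)$. Since $\mathcal{K}\in\mathbb{K}_q$ stabilizes the closed loop, $\mathcal{A}$ is Hurwitz, so $\mathbf{M}_{12}^\mathcal{K}, \mathbf{M}_{21}^\mathcal{K}\in\mathcal{RH}_\infty$ and their $\mathcal{H}_\infty$ norms are finite.

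Next I would apply the triangle inequality and treat the strictly proper piece and the constant piece separately. The strictly proper piece is routine: the submultiplicative estimate $\|\mathbf{M}_{12}^\mathcal{K}\Delta\mathbf{Q}\,\mathbf{M}_{21}^\mathcal{K}\|_{\mathcal{H}_2}\le\|\mathbf{M}_{12}^\mathcal{K}\|_{\mathcal{H}_\infty}\|\Delta\mathbf{Q}\|_{\mathcal{H}_2}\|\mathbf{M}_{21}^\mathcal{K}\|_{\mathcal{H}_\infty}$ delivers the needed bound immediately. The main obstacle is the constant piece $\|\mathbf{M}_{12}^\mathcal{K}\Delta Q\,\mathbf{M}_{21}^\mathcal{K}\|_{\mathcal{H}_2}$, because $\Delta Q$ has no finite $\mathcal{H}_2$ norm on its own. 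I would resolve this by decomposing $\mathbf{M}_{12}^\mathcal{K}=\widetilde{\mathbf{M}}_{12}+D_{12}$ and $\mathbf{M}_{21}^\mathcal{K}=\widetilde{\mathbf{M}}_{21}+D_{21}$ into strictly proper parts $\widetilde{\mathbf{M}}_{ij}\in\mathcal{RH}_{\infty,0}$ and feedthrough matrices read off from \eqref{m_k}, and expanding the product into four cross terms. The crucial observation is that the block sparsity patterns of $D_{12}$ and $D_{21}$, combined with the structural zero of $\Delta Q\in\mathbb{R}^{(q+m_1)\times(q+m_2)\mid m_1\times m_2}$, force the pure feedthrough cross term $D_{12}\Delta Q\,D_{21}$ to vanish identically. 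This is precisely the design feature of the parameter set $\mathbb{U}$ that makes the infinite-dimensional lifting well-posed. Each of the remaining three cross terms contains at least one strictly proper factor $\widetilde{\mathbf{M}}_{ij}$, so it can be controlled by $C_i\|\Delta Q\|_F$ via the elementary mixed inequality $\|G\,K\,H\|_{\mathcal{H}_2}\le\|G\|_{\mathcal{H}_\infty}\|K\|_F\|H\|_{\mathcal{H}_2}$ (with $K$ constant) and its left/right symmetric analogues.

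Finally, combining the two estimates via $(a+b)^2\le 2a^2+2b^2$ produces
$\|\mathbf{M}_{12}^\mathcal{K}(\Delta\mathbf{Q}+\Delta Q)\mathbf{M}_{21}^\mathcal{K}\|_{\mathcal{H}_2}^2\le\tfrac{L_\mathcal{K}}{2}(\|\Delta\mathbf{Q}\|_{\mathcal{H}_2}^2+\|\Delta Q\|_F^2)$
for an explicit constant $L_\mathcal{K}$ depending only on $\|\mathbf{M}_{12}^\mathcal{K}\|_{\mathcal{H}_\infty}$, $\|\mathbf{M}_{21}^\mathcal{K}\|_{\mathcal{H}_\infty}$, $\|\widetilde{\mathbf{M}}_{12}\|_{\mathcal{H}_2}$, $\|\widetilde{\mathbf{M}}_{21}\|_{\mathcal{H}_2}$, and the spectral norms of $D_{12}, D_{21}$, all of which are finite functions of the system data and $\mathcal{K}$. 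Substituting back into the Taylor identity yields~\eqref{eq:lipschitz_smoothness}. The only nontrivial step is verifying the vanishing identity $D_{12}\Delta Q\,D_{21}=0$ from the block partitions; everything else is routine bookkeeping of $\mathcal{H}_2$/$\mathcal{H}_\infty$ inequalities.
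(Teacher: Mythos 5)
Your proposal is correct, and its skeleton coincides with the paper's: both start from the exact quadratic identity (no higher-order remainder, since the map $(\mathbf{Q},Q)\mapsto \mathbf{M}_{11}^\mathcal{K}+\mathbf{M}_{12}^\mathcal{K}(\mathbf{Q}+Q)\mathbf{M}_{21}^\mathcal{K}$ is affine), both reduce the claim to bounding $\|\mathbf{M}_{12}^\mathcal{K}(\Delta\mathbf{Q}+\Delta Q)\mathbf{M}_{21}^\mathcal{K}\|_{\mathcal{H}_2}^2$ via $\|A+B\|^2\le 2\|A\|^2+2\|B\|^2$, and both dispatch the $\Delta\mathbf{Q}$ piece with the submultiplicative $\mathcal{H}_\infty$--$\mathcal{H}_2$ estimate. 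Where you genuinely diverge is the constant piece $\|\mathbf{M}_{12}^\mathcal{K}\,\Delta Q\,\mathbf{M}_{21}^\mathcal{K}\|_{\mathcal{H}_2}^2$: the paper rewrites it as a trace of residues of $(\mathbf{M}_{21}^\mathcal{K}(-s))^\top\mathbf{M}_{21}^\mathcal{K}(s)\Delta Q(\mathbf{M}_{12}^\mathcal{K}(-s))^\top\mathbf{M}_{12}^\mathcal{K}(s)\Delta Q^\top$ via Lemma~\ref{res} and asserts a bound $C_\mathcal{K}'\|\Delta Q\|_F^2$, whereas you split $\mathbf{M}_{12}^\mathcal{K}=\widetilde{\mathbf{M}}_{12}+D_{12}$, $\mathbf{M}_{21}^\mathcal{K}=\widetilde{\mathbf{M}}_{21}+D_{21}$ and observe that $D_{12}\Delta Q\,D_{21}=0$. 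That cancellation does check out against \eqref{m_k}: $D_{12}$ has $R^{1/2}$ only in its lower-left block, so $D_{12}\Delta Q$ selects the first $m_1$ rows of $\Delta Q$; $D_{21}$ has $\mathcal{V}^{1/2}$ only in its upper-right block, so right-multiplication then selects the first $m_2$ columns; the composition therefore isolates exactly the top-left $m_1\times m_2$ block of $\Delta Q$, which vanishes by the structural constraint $\Delta Q\in\mathbb{R}^{(q+m_1)\times(q+m_2)\mid m_1\times m_2}$. Your route buys something the paper leaves implicit: it verifies that $\mathbf{M}_{12}^\mathcal{K}\Delta Q\,\mathbf{M}_{21}^\mathcal{K}$ is strictly proper (hence actually has finite $\mathcal{H}_2$ norm, and Lemma~\ref{res} is applicable at all), and it yields an explicit constant in terms of $\|\widetilde{\mathbf{M}}_{12}\|_{\mathcal{H}_2}$, $\|\widetilde{\mathbf{M}}_{21}\|_{\mathcal{H}_2}$, the $\mathcal{H}_\infty$ norms, and $\|D_{12}\|$, $\|D_{21}\|$. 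The paper's residue route is shorter but relies on the reader accepting the finiteness and the quadratic dependence on $\Delta Q$ without exhibiting why the feedthrough obstruction is absent. No gap in your argument.
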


\begin{proof}
	To streamline the notation, let us define the perturbations as $\Delta\mathbf{Q} = \mathbf{Q}-\mathbf{Q}'$ and $\Delta Q = Q-Q'$.
	First, we expand $J_{\mathcal{K}}(\mathbf{Q}, Q)$ around $J_{\mathcal{K}}(\mathbf{Q}', Q')$. Using the definition from \eqref{eq:h2_expansion}, we get
	\begin{align*}
		J_{\mathcal{K}}(\mathbf{Q},Q)
		&= J_{\mathcal{K}}(\mathbf{Q}',Q') + \langle \nabla J_\mathcal{K}(\mathbf{Q}',Q'), (\Delta\mathbf{Q}, \Delta Q) \rangle_{\mathbb{U}} + \|\mathbf{M}_{12}^\mathcal{K}(\Delta\mathbf{Q}+\Delta Q)\mathbf{M}_{21}^\mathcal{K}\|^2_{\mathcal{H}_2}.
	\end{align*}
	Here, the linear terms correspond precisely to the inner product with the gradient at $(\mathbf{Q}', Q')$.
	
	The next step is to bound the quadratic term $\|\mathbf{M}_{12}^\mathcal{K}(\Delta\mathbf{Q}+\Delta Q)\mathbf{M}_{21}^\mathcal{K}\|^2_{\mathcal{H}_2}$. Using the inequality $\|A+B\|^2 \leq 2\|A\|^2 + 2\|B\|^2$,  $\|\mathbf{M}_{12}^\mathcal{K}(\Delta\mathbf{Q}+\Delta Q)\mathbf{M}_{21}^\mathcal{K}\|_{\mathcal{H}_2}^2$ can be expressed as
	\begin{equation}\label{eq:quad_term_split}
		\|\mathbf{M}_{12}^\mathcal{K}(\Delta\mathbf{Q}+\Delta Q)\mathbf{M}_{21}^\mathcal{K}\|^2_{\mathcal{H}_2} \leq 2\|\mathbf{M}_{12}^\mathcal{K}\Delta\mathbf{Q}\mathbf{M}_{21}^\mathcal{K}\|^2_{\mathcal{H}_2} + 2\|\mathbf{M}_{12}^\mathcal{K}\Delta Q\mathbf{M}_{21}^\mathcal{K}\|^2_{\mathcal{H}_2}.
	\end{equation}
	We now bound each of these two terms using properties of $\mathcal{H}_\infty$ and $\mathcal{H}_2$ norms, as well as the residue theorem for the constant term. This leads to the following detailed bound:
	\begin{align*}
		&\|\mathbf{M}_{12}^\mathcal{K}(\Delta\mathbf{Q}+\Delta Q)\mathbf{M}_{21}^\mathcal{K}\|^2_{\mathcal{H}_2} \\
		&\leq 2 \left( \frac{\|\mathbf{M}_{12}^\mathcal{K}\|_{\mathcal{H}_\infty}^2\|\mathbf{M}_{21}^\mathcal{K}\|_{\mathcal{H}_\infty}^2}{2\pi}\int_{-\infty}^{\infty} \operatorname{tr}\left(\Delta\mathbf{Q}(j\omega)^* \Delta\mathbf{Q}(j\omega)\right)  d\omega \right) \\
		&\quad + 2\operatorname{tr}\left(\operatorname{Res}_{\Re(s)< 0}\left((\mathbf{M}_{21}^\mathcal{K}(-s))^\top\mathbf{M}_{21}^\mathcal{K}(s)\Delta Q(\mathbf{M}_{12}^\mathcal{K}(-s))^\top\mathbf{M}_{12}^\mathcal{K}(s)\Delta Q^{\top}\right)\right).
	\end{align*}
	Recognizing $\|\Delta\mathbf{Q}\|_{\mathcal{H}_2}^2 = \frac{1}{2\pi} \int_{-\infty}^{\infty} \operatorname{tr}(\Delta\mathbf{Q}(j\omega)^* \Delta\mathbf{Q}(j\omega)) d\omega$ and that the residue term is bounded by $C_{\mathcal{K}}'\|\Delta Q\|_F^2$ for some constant $C_{\mathcal{K}}'$ (from Lemma \ref{res}), we can see that there must exist a constant $L_{\mathcal{K}} > 0$ such that
	\begin{equation*}
		\|\mathbf{M}_{12}^\mathcal{K}(\Delta\mathbf{Q}+\Delta Q)\mathbf{M}_{21}^\mathcal{K}\|^2_{\mathcal{H}_2} \leq \frac{L_{\mathcal{K}}}{2} \left( \| \Delta\mathbf{Q} \|_{\mathcal{H}_2}^2 + \| \Delta Q \|_F^2 \right).
	\end{equation*}
	Substituting this bound back into our expansion for $J_{\mathcal{K}}(\mathbf{Q},Q)$ yields the inequality \eqref{eq:lipschitz_smoothness}. This establishes that $J_{\mathcal{K}}$ is Lipschitz smooth, and an explicit value for the constant $L_{\mathcal{K}}$ can be constructed from the bounds derived above. This completes the proof.
\end{proof}

\subsection{The Gradient-Based Method and Convergence Analysis}

In this section, we present the gradient-based method and analyze its convergence rate. 

% 注意：这里的 \begin{algorithm} 不再有 [H] 选项
	\begin{algorithm}[H] % 使用 tbp (top, bottom, page) 作为常规浮动选项
		\caption{Iterative Update of $\mathbf{Q}_k(s)$}
		\label{alg:Q_update}
		
		% 使用SetAlgoLined或vlined可以更好看，而不是自己加斜体注释
		\SetKwComment{Comment}{// }{}
		
		\KwIn{Initial controller $\mathcal{K}_0$; system matrices $(\mathcal{A}_0, \mathcal{B}_0^0, \mathcal{B}_1, \mathcal{C}_0, \mathcal{C}_1^0)$; step size $\eta$; maximum iteration $N$;}
		\KwOut{Sequences $\{\mathbf{Q}_k(s)\}$, $\{Q_k\}$, $\{\Delta\mathbf{K}_{k+1}\}$, the final controller $\mathcal{K}_N$;}
		
		\BlankLine % 添加一个空行，使布局更清晰
		
		Initialize $\mathbf{Q}_0(s) \gets 0_{{q+m_1}\times (q+m_2)}$; \\
		Initialize $Q_0 \gets 0_{{q+m_1}\times (q+m_2)}$;
		
		\BlankLine
		
		\For{$k = 0$ \KwTo $N-1$}{
			Compute:
			\begin{align*}
				\mathbf{S}_k(s) \gets \mathcal{S} \Big[&\, (\mathcal{C}_1^0 - \mathcal{B}_1 \mathcal{P}_{\mathcal{K}_0}) (sI - \mathcal{A}_0)^{-1} (\mathcal{B}_0^0 - \Sigma_{\mathcal{K}_0} \mathcal{C}_0) \\
				&+ (\mathbf{M}^{\mathcal{K}_0}_{12}(-s))^\top \mathbf{M}^{\mathcal{K}_0}_{12}(s) (\mathbf{Q}_k(s)+Q_k) \mathbf{M}_{21}^{\mathcal{K}_0}(s) (\mathbf{M}_{21}^{\mathcal{K}_0}(-s))^\top
				\Big]
			\end{align*}
			\Comment{Obtain the stable part via modal decomposition and removing unstable modes.}
			
			Update $\mathbf{Q}_{k+1}(s) \gets \mathbf{Q}_k(s) - \eta\, \mathbf{S}_k(s)$;\\
			Update $Q_{k+1} \gets Q_k - \eta\, \mathbf{e}_{(q+m_1)\times (q+m_2)}(\operatorname{Res}_{\Re (s)< 0}(\mathbf{S}_k(s)),m_1,m_2)$;\\
			Update $\Delta\mathbf{K}_{k+1}\gets (I + (\mathbf{Q}_{k+1}(s)+Q_{k+1}) \mathbf{M}_{22}^{\mathcal{K}_0})^{-1} (\mathbf{Q}_{k+1}(s)+Q_{k+1})$;\\
				}
		
		\BlankLine
		
		$\mathcal{K}_N \gets \mathcal{F}(\mathcal{K}_0, \Delta\mathbf{K}_N)$;\\
		
		\Return{$\{\mathbf{Q}_k(s)\},\, \{Q_k\},\, \{\Delta \mathbf{K}_{k+1}\},\, \mathcal{K}_N$;}
		
		\BlankLine
		\hrulefill % 使用一条水平线来分隔 Remark，比 \medskip 更清晰
		\BlankLine
		
		\textbf{Remark:} All the inverses in Algorithm~\ref{alg:Q_update} are well-defined in every iteration according to Youla parametrization.
	\end{algorithm}
In the Algorithm \ref{alg:Q_update} above,
\begin{equation*}
	\mathcal{A}_0 = \begin{bmatrix} A & BC_{\mathcal{K}_0} \\ B_{\mathcal{K}_0}C & A_{\mathcal{K}_0} \end{bmatrix},\qquad
	\mathcal{B}_0^0 = 
	\begin{bmatrix}
		0 & 0\\
		B_{\mathcal{K}_0}\mathcal{V} & 0
	\end{bmatrix},\qquad 
	\mathcal{C}_1^0 =
	\begin{bmatrix}
		0 & RC_{\mathcal{K}_0}\\
		0 & 0
	\end{bmatrix}.
\end{equation*}
and the operator $\mathcal{F}$ is defined as follows.  
At the final iteration, let $G_{\mathcal{K}_0}$ denote the nominal closed-loop system (the generalized plant formed by interconnecting the original plant with the initial controller parameter $\mathcal{K}_0$). The update $\Delta\mathbf{K}_{N}$, designed to improve the performance, is then interconnected with $G_{\mathcal{K}_0}$ via a lower Linear Fractional Transformation (LFT), as illustrated in \cite[Section~10.1]{1995Robust}.  
The final controller $\mathcal{K}_{N}$ is then defined as the overall input-output map of the interconnected system, which can be expressed by
\begin{equation*}
	\mathcal{K}_{N} = \mathcal{F}(\mathcal{K}_0,\,\Delta\mathbf{K}_{N}) := 
	\mathrm{minreal}\Big(
	\mathscr{L}(G_{\mathcal{K}_0}, \Delta\mathbf{K}_{N})
	\Big);
\end{equation*}
here, $\mathscr{L}(G_{\mathcal{K}_0},\Delta\mathbf{K}_{N})$ denotes the lower LFT interconnection of $G_{\mathcal{K}_0}$ and $\Delta\mathbf{K}_{N}$.  
The $\mathrm{minreal}(\cdot)$ operation extracts the minimal state-space realization to yield the final controller in the standard form:
\begin{equation*}
	\mathcal{K}_{N}=
	\begin{bmatrix}
		0 & C_{\mathcal{K}_{N}} \\
		B_{\mathcal{K}_{N}} & A_{\mathcal{K}_{N}}
	\end{bmatrix}.
\end{equation*}

	\begin{theorem}\label{thm:lip}There exists a constant $L > 0$ such that for any $(\mathbf{Q}, Q),\,(\mathbf{Q}', Q') \in \mathbb{U}$,
	\begin{align}
		\frac{1}{L} \left\| \nabla J_{\mathcal{K}^0}(\mathbf{Q}', Q') - \nabla J_{\mathcal{K}^0}(\mathbf{Q}, Q) \right\|_\mathbb{U}^2
		\leq \left\langle \nabla J_{\mathcal{K}^0}(\mathbf{Q}', Q') - \nabla J_{\mathcal{K}^0}(\mathbf{Q}, Q),\, \Delta \right\rangle_{\mathbb{U}}
		\label{eq:lip}
	\end{align}
	holds with $\Delta = (\mathbf{Q}', Q') - (\mathbf{Q}, Q)$.
\end{theorem}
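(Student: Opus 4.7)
The plan is to recognize that the inequality \eqref{eq:lip} is precisely the co-coercivity (or Baillon–Haddad) property of $\nabla J_{\mathcal{K}^0}$, and to derive it from the two ingredients already established in this section: the convexity of the lifted objective on $\mathbb{U}$ and the $L$-smoothness obtained in Lemma \ref{ass:regularity}. Viewing $\mathbb{U}$ as a pre-Hilbert space under the inner product \eqref{inner} and norm \eqref{norm}, the classical Baillon–Haddad argument transfers essentially verbatim, so the burden of the proof is mainly bookkeeping rather than a new estimate.

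First I would verify convexity of $J_{\mathcal{K}^0}$ on $\mathbb{U}$. The map $(\mathbf{Q},Q)\mapsto \mathbf{M}_{11}^{\mathcal{K}^0}+\mathbf{M}_{12}^{\mathcal{K}^0}(\mathbf{Q}+Q)\mathbf{M}_{21}^{\mathcal{K}^0}$ is affine, and the squared $\mathcal{H}_2$ norm is convex; hence the composition $J_{\mathcal{K}^0}$ is convex. It is also worth noting that $\mathbb{U}$ is itself a linear subspace, so convex combinations stay inside it and no feasibility issue arises. Second, I would invoke Lemma \ref{ass:regularity} to obtain the standard quadratic upper bound, i.e.\ $L$-smoothness, with the constant $L:=L_{\mathcal{K}^0}$ depending only on the fixed initial controller $\mathcal{K}^0$.

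Third comes the heart of the argument, the Baillon–Haddad step. Fix $(\mathbf{Q},Q),(\mathbf{Q}',Q')\in\mathbb{U}$ and set $\Delta=(\mathbf{Q}',Q')-(\mathbf{Q},Q)$. For each $(\mathbf{Q}_0,Q_0)\in\mathbb{U}$, define the auxiliary functional
\[
g_{(\mathbf{Q}_0,Q_0)}(\mathbf{Q},Q) := J_{\mathcal{K}^0}(\mathbf{Q},Q)-\bigl\langle \nabla J_{\mathcal{K}^0}(\mathbf{Q}_0,Q_0),\,(\mathbf{Q},Q)\bigr\rangle_{\mathbb{U}},
\]
which is still convex, $L$-smooth, and attains its minimum at $(\mathbf{Q}_0,Q_0)$ because its gradient vanishes there. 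Applying the quadratic upper bound from Lemma \ref{ass:regularity} to $g_{(\mathbf{Q}_0,Q_0)}$ and minimising the right-hand side over the direction of steepest descent (which stays in $\mathbb{U}$ since the gradient formula in Theorem \ref{thm:gradient} produces an element of $\mathcal{RH}_{\infty,0}\times\mathbb{R}^{(q+m_1)\times(q+m_2)\mid m_1\times m_2}$) yields
\[
J_{\mathcal{K}^0}(\mathbf{Q},Q)-J_{\mathcal{K}^0}(\mathbf{Q}',Q')-\bigl\langle \nabla J_{\mathcal{K}^0}(\mathbf{Q}',Q'),(\mathbf{Q},Q)-(\mathbf{Q}',Q')\bigr\rangle_{\mathbb{U}}\geq \frac{1}{2L}\bigl\|\nabla J_{\mathcal{K}^0}(\mathbf{Q},Q)-\nabla J_{\mathcal{K}^0}(\mathbf{Q}',Q')\bigr\|_{\mathbb{U}}^{2},
\]
and a symmetric inequality with the roles of the two points swapped. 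Adding the two and rearranging gives exactly \eqref{eq:lip}.

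The main obstacle, and really the only subtle point, is justifying that the Baillon–Haddad minimisation step is legitimate in the present function-space setting: one must check that $(\mathbf{Q},Q)-\tfrac{1}{L}\nabla g(\mathbf{Q},Q)$ remains in $\mathbb{U}$. This is precisely why the structure of $\nabla J_{\mathcal{K}^0}$ established in Theorem \ref{thm:gradient} matters: its first component lies in $\mathcal{RH}_{\infty,0}$ thanks to the stable projection $\mathcal{S}[\cdot]$, and its second component has the zero-block pattern of $\mathbb{R}^{(q+m_1)\times(q+m_2)\mid m_1\times m_2}$. Hence $\mathbb{U}$ is closed under the gradient step, so the classical Hilbert-space proof of Baillon–Haddad carries over intact, and the explicit constant $L$ coming from Lemma \ref{ass:regularity} can be read off in terms of $\|\mathbf{M}_{12}^{\mathcal{K}^0}\|_{\mathcal{H}_\infty}$, $\|\mathbf{M}_{21}^{\mathcal{K}^0}\|_{\mathcal{H}_\infty}$, and the constant arising from the residue bound on the quadratic term in $\Delta Q$.
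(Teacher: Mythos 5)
Your proposal is correct and follows essentially the same route as the paper: both prove the co-coercivity inequality via the Baillon--Haddad argument, introducing the auxiliary functional $J_{\mathcal{K}^0}-\langle\nabla J_{\mathcal{K}^0}(\mathbf{Q},Q),\cdot\rangle_{\mathbb{U}}$, applying the quadratic upper bound from Lemma~\ref{ass:regularity} along a steepest-descent step, and adding the symmetric inequality. Your explicit checks that $J_{\mathcal{K}^0}$ is convex on $\mathbb{U}$ (affine map composed with the squared $\mathcal{H}_2$ norm) and that $\mathbb{U}$ is closed under the gradient step are details the paper leaves implicit, and they strengthen rather than alter the argument.
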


\begin{proof}
	Consider the auxiliary function
	\[
	\phi(\hat{\mathbf{Q}}, \hat{Q}) := J_{\mathcal{K}^0}(\hat{\mathbf{Q}}, \hat{Q}) - \langle \nabla J_{\mathcal{K}^0}(\mathbf{Q}, Q), (\hat{\mathbf{Q}}, \hat{Q}) \rangle_\mathbb{U}, \forall (\hat{\mathbf{Q}},\hat{Q})\in\mathbb{U},
	\]
	which is convex and achieves its minimum at $(\mathbf{Q},Q)$. Since $J_{\mathcal{K}^0}$ is $L_{\mathcal{K}_0}$-smooth, the function $\phi$ is also $L_{\mathcal{K}_0}$-smooth. A direct consequence of the $L_{\mathcal{K}_0}$-smoothness of $\phi$ is the following inequality:
	\begin{equation}\label{er1}
		\phi({\mathbf{Q}}, {Q})  \leq \phi(\mathbf{Q}', Q') + \langle \nabla \phi(\mathbf{Q}', Q'),\, (\hat{\mathbf{Q}}, \hat{Q}) - (\mathbf{Q}', Q') \rangle_\mathbb{U} + \frac{L_{\mathcal{K}_0}}{2} \|(\hat{\mathbf{Q}}, \hat{Q}) - (\mathbf{Q}', Q')\|_\mathbb{U}^2. 
	\end{equation}
	Taking $(\hat{\mathbf{Q}}, \hat{Q}) =\nabla \phi(\mathbf{Q}',Q')+ (\mathbf{Q}', Q')$ into \eqref{er1}, we have
	\begin{equation}\label{eq_b1}
		\phi(\mathbf{Q}, Q)  \leq \phi(\mathbf{Q}', Q')-\frac{1}{2L_{\mathcal{K}_0}}\|\nabla \phi(\mathbf{Q}',Q')\|_{\mathbb{U}}^2
	\end{equation}
which can be rewritten as
\begin{equation}\label{eq_er1}
			J_{\mathcal{K}_0}(\mathbf{Q}, Q)  \leq J_{\mathcal{K}_0}(\mathbf{Q}', Q')-\left\langle \nabla J_{\mathcal{K}^0}(\mathbf{Q}, Q),\, \Delta \right\rangle_{\mathbb{U}}-\frac{1}{2L_{\mathcal{K}_0}}\|\nabla \phi(\mathbf{Q}',Q')\|_{\mathbb{U}}^2
\end{equation}
	By swapping the roles of $(\mathbf{Q},Q)$ and $(\mathbf{Q}',Q')$ in the derivation, we obtain another inequality:
	\begin{equation}\label{eq_b2}
		  J_{\mathcal{K}_0}(\mathbf{Q}', Q')\leq 	J_{\mathcal{K}_0}(\mathbf{Q}, Q) +\left\langle \nabla J_{\mathcal{K}^0}(\mathbf{Q}^\prime, Q^\prime),\, \Delta \right\rangle_{\mathbb{U}}-\frac{1}{2L_{\mathcal{K}_0}}\|\nabla \phi(\mathbf{Q}',Q')\|_{\mathbb{U}}^2
	\end{equation}
	Setting $L=L_{\mathcal{K}_0}$ and combining~\eqref{eq_er1} with \eqref{eq_b2} leads to the desired result~\eqref{eq:lip}.
\end{proof}
\begin{remark}\label{rmk:space_extension}
	This result constitutes an extension of the classic Lipschitz continuity (or smoothness) of the gradient of the cost functional in finite-dimensional Euclidean space (see, e.g.,~\cite[Sec.~2.1]{Nesterov2018}), to the infinite-dimensional setting, with two notable distinctions:
	\begin{itemize}
			\item The gradient operator $\nabla J_{\mathcal{K}^0}(\mathbf{Q}, Q)$ is defined with respect to transfer functions (see Theorem~\ref{thm:gradient}), in contrast to gradients in Euclidean spaces, which are defined with respect to finite-dimensional vectors.
			\item The inner product $\langle\cdot, \cdot\rangle_\mathbb{U}$ is, by construction, a composite of the $\mathcal{H}_2$ inner product and the standard Euclidean inner product, taking into account both the transfer function and the static gain components.
		\end{itemize}
\end{remark}
Building on Theorem \ref{thm:lip}, we now establish the convergence of Algorithm~\ref{alg:Q_update}.

\begin{theorem}[Sublinear Convergence]\label{thm:conv}
	Provided that the step size satisfies $\eta < 2/L_{\mathcal{K}_0}$, the sequence of objective functional, $\{J_{\mathcal{K}_0}(\mathbf{Q}_k, Q_k)\}_{k\ge0}$, generated by Algorithm~\ref{alg:Q_update} converges sublinearly to the optimal value of  LQG problem.
\end{theorem}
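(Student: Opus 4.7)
The plan is to reduce the statement to the textbook sublinear convergence analysis of gradient descent on a smooth convex function, but to carry out the argument in the infinite-dimensional Hilbert space $(\mathbb{U},\langle\cdot,\cdot\rangle_{\mathbb{U}})$ rather than in $\mathbb{R}^n$. The three ingredients I need are already available: (i) convexity of $J_{\mathcal{K}_0}$ on $\mathbb{U}$, which follows from the linear parameterization $\mathbf{M}_{11}^{\mathcal{K}_0}+\mathbf{M}_{12}^{\mathcal{K}_0}(\mathbf{Q}+Q)\mathbf{M}_{21}^{\mathcal{K}_0}$ of the closed-loop transfer matrix together with the fact that $\|\cdot\|_{\mathcal{H}_2}^2$ is a convex functional; (ii) the Fr\'echet-gradient formula of Theorem~\ref{thm:gradient}, which identifies the update directions in Algorithm~\ref{alg:Q_update} as exactly $-\nabla J_{\mathcal{K}_0}(\mathbf{Q}_k,Q_k)$; and (iii) the descent/co-coercivity bounds from Lemma~\ref{ass:regularity} and Theorem~\ref{thm:lip}.

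First, I would verify that Algorithm~\ref{alg:Q_update} is genuinely a vanilla gradient step in $\mathbb{U}$, i.e.
\[
(\mathbf{Q}_{k+1},Q_{k+1})=(\mathbf{Q}_k,Q_k)-\eta\,\nabla J_{\mathcal{K}_0}(\mathbf{Q}_k,Q_k).
\]
This is immediate from comparing the definition of $\mathbf{S}_k(s)$ inside the loop with the gradient expression in Theorem~\ref{thm:gradient}. Next, I would apply the descent inequality \eqref{eq:lipschitz_smoothness} with $(\mathbf{Q},Q)=(\mathbf{Q}_{k+1},Q_{k+1})$ and $(\mathbf{Q}',Q')=(\mathbf{Q}_k,Q_k)$, substitute the gradient step, and pick up the standard bound
\[
J_{\mathcal{K}_0}(\mathbf{Q}_{k+1},Q_{k+1})\le J_{\mathcal{K}_0}(\mathbf{Q}_k,Q_k)-\eta\Bigl(1-\tfrac{\eta L_{\mathcal{K}_0}}{2}\Bigr)\bigl\|\nabla J_{\mathcal{K}_0}(\mathbf{Q}_k,Q_k)\bigr\|_{\mathbb{U}}^2,
\]
which is strictly decreasing for $\eta<2/L_{\mathcal{K}_0}$.

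Let $(\mathbf{Q}^\star,Q^\star)$ be any minimizer of $J_{\mathcal{K}_0}$ on $\mathbb{U}$; existence is guaranteed because the classic LQG optimal controller from \eqref{opt_lqg}, shifted by $\mathcal{K}_0$ and passed through the bijection of Lemma~\ref{lem_rea}, provides an explicit element of $\mathbb{U}$ attaining the LQG optimum, and convexity of $J_{\mathcal{K}_0}$ ensures this value is the global infimum. I would then use convexity at $(\mathbf{Q}_k,Q_k)$ to write
\[
J_{\mathcal{K}_0}(\mathbf{Q}_k,Q_k)-J_{\mathcal{K}_0}(\mathbf{Q}^\star,Q^\star)\le\bigl\langle\nabla J_{\mathcal{K}_0}(\mathbf{Q}_k,Q_k),\,(\mathbf{Q}_k,Q_k)-(\mathbf{Q}^\star,Q^\star)\bigr\rangle_{\mathbb{U}},
\]
expand $\|(\mathbf{Q}_{k+1},Q_{k+1})-(\mathbf{Q}^\star,Q^\star)\|_{\mathbb{U}}^2$ by the parallelogram identity, and combine with the descent inequality above to obtain the classic one-step recursion
\[
\|(\mathbf{Q}_{k+1},Q_{k+1})-(\mathbf{Q}^\star,Q^\star)\|_{\mathbb{U}}^2\le\|(\mathbf{Q}_k,Q_k)-(\mathbf{Q}^\star,Q^\star)\|_{\mathbb{U}}^2-2\eta\bigl(J_{\mathcal{K}_0}(\mathbf{Q}_{k+1},Q_{k+1})-J_{\mathcal{K}_0}^\star\bigr).
\]
Telescoping from $0$ to $k-1$, using monotonicity of $\{J_{\mathcal{K}_0}(\mathbf{Q}_k,Q_k)\}$, and dividing by $k$ then yields the $O(1/k)$ bound
\[
J_{\mathcal{K}_0}(\mathbf{Q}_k,Q_k)-J_{\mathcal{K}_0}^\star\le\frac{\|(\mathbf{Q}_0,Q_0)-(\mathbf{Q}^\star,Q^\star)\|_{\mathbb{U}}^2}{2\eta k}.
\]

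The main obstacle, in my view, is not the gradient-descent bookkeeping, which is essentially textbook once Lemma~\ref{ass:regularity} and Theorem~\ref{thm:lip} are in hand, but rather two subtle points tied to the infinite-dimensional lift: first, ensuring that the inner-product/norm computations (Cauchy--Schwarz, the parallelogram identity, the identification of Fr\'echet gradients with Riesz representers in $\mathbb{U}$) are legitimate in the Hilbert space obtained by completing $\mathbb{U}$ under $\|\cdot\|_{\mathbb{U}}$, and second, verifying that the iterates $(\mathbf{Q}_k,Q_k)$ remain in the admissible set $\mathbb{U}$ so that the corresponding $\Delta\mathbf{K}_{k+1}$ via Lemma~\ref{lem_rea} is a stabilizing controller at every step. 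The first is handled by Remark~\ref{rmk:space_extension}; the second follows because convexity of $\mathbb{U}$ (as the Youla-parameterized feasible set) and the descent direction $-\nabla J_{\mathcal{K}_0}$ keep the iterates inside $\mathbb{U}$ whenever $\eta$ is chosen small enough, which is implied by $\eta<2/L_{\mathcal{K}_0}$.
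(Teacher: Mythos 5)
Your proposal follows essentially the same route as the paper: both reduce the claim to the textbook analysis of gradient descent on an $L$-smooth convex functional over $(\mathbb{U},\langle\cdot,\cdot\rangle_{\mathbb{U}})$, using the gradient identification from Theorem~\ref{thm:gradient}, the descent inequality from Lemma~\ref{ass:regularity}, and the co-coercivity bound of Theorem~\ref{thm:lip}. The one substantive difference is the final bookkeeping, and it affects the admissible step-size range. You telescope the squared-distance recursion
\[
\|(\mathbf{Q}_{k+1},Q_{k+1})-(\mathbf{Q}^\star,Q^\star)\|_{\mathbb{U}}^2\le\|(\mathbf{Q}_k,Q_k)-(\mathbf{Q}^\star,Q^\star)\|_{\mathbb{U}}^2-2\eta\bigl(J_{\mathcal{K}_0}(\mathbf{Q}_{k+1},Q_{k+1})-J_{\mathcal{K}_0}^\star\bigr),
\]
but if you expand the update, apply convexity, and then insert the descent lemma, you are left with an extra term $\eta^2(\eta L_{\mathcal{K}_0}-1)\|\nabla J_{\mathcal{K}_0}(\mathbf{Q}_k,Q_k)\|_{\mathbb{U}}^2$, which is nonpositive only when $\eta\le 1/L_{\mathcal{K}_0}$. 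As written, your telescoping therefore establishes the $O(1/k)$ bound only on the range $\eta\le 1/L_{\mathcal{K}_0}$, not on the claimed range $\eta<2/L_{\mathcal{K}_0}$. The paper avoids this by a different combination of the same ingredients: it first uses co-coercivity (Theorem~\ref{thm:lip}, with the gradient vanishing at the optimum) to show that $r_k:=\|(\mathbf{Q}_k,Q_k)-(\mathbf{Q}_{\mathcal{K}^*},Q_{\mathcal{K}^*})\|_{\mathbb{U}}$ is nonincreasing for every $\eta<2/L_{\mathcal{K}_0}$, then combines convexity in the form $\Delta J_k\le r_0\,\|\nabla J_{\mathcal{K}_0}(\mathbf{Q}_k,Q_k)\|_{\mathbb{U}}$ with the descent lemma to obtain the quadratic recursion $\Delta J_{k+1}\le\Delta J_k-\frac{\eta(1-L_{\mathcal{K}_0}\eta/2)}{r_0^2}(\Delta J_k)^2$, whose coefficient remains positive on the full range. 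Your proof is repaired either by switching to this function-value recursion or by restricting the statement to $\eta\le 1/L_{\mathcal{K}_0}$.

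On your two flagged subtleties: the Hilbert-space bookkeeping is indeed what Remark~\ref{rmk:space_extension} and the construction of $\langle\cdot,\cdot\rangle_{\mathbb{U}}$ are for, so that part is fine. The feasibility of the iterates, however, should not be argued via a small-step condition. Under the Youla parameterization, $\mathbb{U}$ is essentially a linear space: any stable, appropriately structured pair $(\mathbf{Q},Q)$ corresponds to a stabilizing $\Delta\mathbf{K}$ by Lemma~\ref{lem_rea}, and the update directions $\mathbf{S}_k$ are stable by construction (they are outputs of the stable-projection operator $\mathcal{S}[\cdot]$). Hence the iterates remain in $\mathbb{U}$ for any step size, and your claim that feasibility is "implied by $\eta<2/L_{\mathcal{K}_0}$" is neither correct as a justification nor needed.
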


\begin{proof}
	Let $\mathcal{K}^*$ denote an optimal controller, and let
	\[
	r_k = \sqrt{ \| \mathbf{Q}_k - \mathbf{Q}_{\mathcal{K}^*} \|_{\mathcal{H}_2}^2 + \| Q_k - Q_{\mathcal{K}^*} \|_F^2 },
	\]
	where $(\mathbf{Q}_{\mathcal{K}^*}, Q_{\mathcal{K}^*})$ is the optimal solution associated with $\mathcal{K}^* \in \mathbb{K}_q$. For the update step,
	\begin{align*}
		r_{k+1}^2 &= \| (\mathbf{Q}_k, Q_{k}) - (\mathbf{Q}_{\mathcal{K}^*}, Q_{\mathcal{K}^*}) - \eta \nabla J_{\mathcal{K}^0}(\mathbf{Q}_k,Q_k) \|_\mathbb{U}^2 \\
		&= \|(\mathbf{Q}_k, Q_{k}) - (\mathbf{Q}_{\mathcal{K}^*}, Q_{\mathcal{K}^*}) \|_\mathbb{U}^2 
		- 2\eta \langle \nabla J_{\mathcal{K}^0}(\mathbf{Q}_k,Q_k), (\mathbf{Q}_k, Q_{k})- (\mathbf{Q}_{\mathcal{K}^*}, Q_{\mathcal{K}^*}) \rangle_\mathbb{U} \\
		&\quad+ \eta^2 \| \nabla J_{\mathcal{K}^0}(\mathbf{Q}_k,Q_k) \|_\mathbb{U}^2 \\
		&\leq r_k^2 - 2\eta \left( \frac{1}{L_{\mathcal{K}_0}} \| \nabla J_{\mathcal{K}^0}(\mathbf{Q}_k,Q_k) \|_\mathbb{U}^2 \right) + \eta^2 \| \nabla J_{\mathcal{K}^0}(\mathbf{Q}_k,Q_k) \|_\mathbb{U}^2 \qquad \text{(by Theorem~\ref{thm:lip})} \\
		&= r_k^2 - \eta \left( \frac{2}{L_{\mathcal{K}_0}} - \eta \right) \| \nabla J_{\mathcal{K}^0}(\mathbf{Q}_k,Q_k) \|_\mathbb{U}^2.
	\end{align*}
	Since $\eta < 2/L_{\mathcal{K}_0}$, the coefficient of $\| \nabla J_{\mathcal{K}^0}(\mathbf{Q}_k, Q_k) \|_\mathbb{U}^2$ in the last equality, i.e., $\eta\left( \frac{2}{L_{\mathcal{K}_0}} - \eta \right)$, is positive, which guarantees the monotonic decrease of $r_k^2$.
	
	Due to the convexity of $J_{\mathcal{K}^0}(\cdot)$, it holds that
	\begin{align*}
		J_{\mathcal{K}^0}(\mathbf{Q}_k,Q_k) - J_{\mathcal{K}^0}(\mathbf{Q}_{\mathcal{K}^*}, Q_{\mathcal{K}^*}) 
		\leq \langle \nabla J_{\mathcal{K}^0}(\mathbf{Q}_k,Q_k), \Delta_k \rangle 
		\leq r_0 \| \nabla J_{\mathcal{K}^0}(\mathbf{Q}_k, Q_k) \|_{\mathbb{U}}.
	\end{align*}
	By the (descent) lemma for $L$-smooth convex functions, one has
	\begin{align*}
		J_{\mathcal{K}^0}(\mathbf{Q}_{k+1}, Q_{k+1}) 
		\leq J_{\mathcal{K}^0}(\mathbf{Q}_k, Q_k) 
		- \eta \left(1 - \frac{L_{\mathcal{K}_0}}{2} \eta \right) \| \nabla J_{\mathcal{K}^0}(\mathbf{Q}_k, Q_k) \|_\mathbb{U}^2.
	\end{align*}
	Thus,
	\begin{equation}\label{sublinear}
		\Delta J_{\mathcal{K}^0}(\mathbf{Q}_{k+1}, Q_{k+1})
		\leq \Delta J_{\mathcal{K}^0}(\mathbf{Q}_k, Q_k)
		- \frac{\eta \left(1 - \frac{L_{\mathcal{K}_0}}{2}\eta \right)}{r_0^2} \left( \Delta J_{\mathcal{K}^0}(\mathbf{Q}_k, Q_k) \right)^2,
	\end{equation}
	where $\Delta J_{\mathcal{K}^0}(\mathbf{Q}_k, Q_k) := J_{\mathcal{K}^0}(\mathbf{Q}_k, Q_k) - J_{\mathcal{K}^0}(\mathbf{Q}_{\mathcal{K}^*}, Q_{\mathcal{K}^*})$. 
	Inequality~\eqref{sublinear} yields the claimed sublinear convergence.
\end{proof}

\begin{remark}
	The preceding theoretical analysis guarantees the sublinear convergence of Algorithm~\ref{alg:Q_update}, assuming exact computation at each iterative step. However, a practical challenge arises from the iterative growth in the order of transfer function $\mathbf{Q}_k$. This growth occurs because the update rule in Algorithm~\ref{alg:Q_update} is an additive process summing $\mathbf{Q}_k$ with another transfer function which can increase the model's complexity with each iteration. This escalating computational cost can impede simulations with a large number of iterations and potentially introduce numerical inaccuracies.
	Addressing this scalability is a valuable direction for future work. One promising approach is to develop a hybrid algorithm that synergizes our method with the standard policy gradient or its variants. Such a method could leverage the computational efficiency of policy gradient for initial exploration, then switch to our algorithm to ensure convergence to the global optimum by escaping local optima and saddle points in problem \eqref{op_lqg}. Another avenue involves a deeper investigation into the optimization landscape of LQG problem itself to find more structurally efficient solutions.
\end{remark}

\begin{remark}[Extension to the Data-Driven Setting]
	The analysis in this section operated under the assumption that all the system parameters are known. We further show that Algorithm~\ref{alg:Q_update} is extensible to a data-driven setting. This is possible because the algorithm's iterative steps do not require an explicit model of the system dynamics. Instead, the updates rely exclusively on a few key quantities—the transfer functions $\mathbf{M}_{22}^{\mathcal{K}_0}$ and $\mathbf{S}_k(s)$, and the residue $\mathbf{e}_{(q+m_1)\times (q+m_2)}(\operatorname{Res}_{\Re (s)< 0}(\mathbf{S}_k(s)),m_1,m_2)$ which can be estimated directly from input-output data. 
	We provide the detailed procedures for this estimation in Appendix~\ref{data} and experimentally validate their effectiveness in subsection~\ref{data_test}.
\end{remark}

\section{Numerical Experiments}\label{num}
\subsection{Example 1: Escaping a Suboptimal Stationary Point}
%
%\subsection{Example 1: Demonstrating Key Performance Aspects}
%A crucial distinction must be made to clarify the context of this experiment. The underlying optimization Problem \eqref{op_lqg}, which seeks to minimize the cost $J(\mathcal{K})$ under parametrization \eqref{cons_k} is non-convex and the existence of suboptimal stationary points that can trap methods like policy gradient. In contrast, Algorithm~\ref{alg:Q_update} circumvents this issue by re-parameterizing the problem. It extends the search space from the non-convex set of controller parameters $\mathbb{K}_q$ to a larger space $\mathbb{U}$, within which the optimization problem becomes convex. This section demonstrates how this re-parameterization allows Algorithm~\ref{alg:Q_update} to bypass the stationary points that challenge direct parameter-space optimization methods.

The scalability issues associated with long-term iterations, as previously discussed, make it impractical to evaluate the Algorithm \ref{alg:Q_update} over very large numbers of iterations in our experimental setting.
Therefore, we focus on assessing the algorithm's performance within a small number of iterations, which is also of practical interest in many real-world scenarios where rapid convergence is desired. 

In this context, we aim to demonstrate two advantageous features of Algorithm~\ref{alg:Q_update}: its rapid initial convergence and robust ability to bypass suboptimal solutions corresponding to stationary points of problem \eqref{op_lqg}. To this end, we consider the typical example from~\cite{1101812}, a case where the vanilla policy gradient method has been shown to be trapped at a suboptimal stationary point of problem \eqref{op_lqg}. This example illustrates our algorithm's performance on a complex optimization landscape. The system matrices are given by:
\begin{equation}
	A=
	\begin{bmatrix}
		-0.5 & 0 \\
		0.5 & -1
	\end{bmatrix},\quad
	B=
	\begin{bmatrix}
		-1 \\
		1
	\end{bmatrix},\quad
	C=
	\begin{bmatrix}
		-\frac{1}{6} & \frac{11}{12}
	\end{bmatrix},\label{eq_ol_sys}
\end{equation}
and the performance weights are set as:
\begin{equation*}
	W=Q=
	\begin{bmatrix}
		1 & 0 \\
		0 & 1
	\end{bmatrix},\quad
	V=R=1.
\end{equation*}
The globally optimal LQG controller obtained by solving equation~(6) is
\begin{equation*}
	A_{\mathcal{K}^*}=
	\begin{bmatrix}
		-1.1 & 0.13 \\
		1.19 & -1.64
	\end{bmatrix},\quad
	{B}_{\mathcal{K}^*}=
	\begin{bmatrix}
		0.11 \\
		0.45
	\end{bmatrix},\quad
	{C}_{\mathcal{K}^*}=
	\begin{bmatrix}
		0.62 & 0.22
	\end{bmatrix}.
\end{equation*}
System \eqref{eq_ol_sys} is open-loop stable, and Theorem 4.2 in \cite{pmlr-v144-tang21a} guarantees that all the controllers $\mathcal{K}$ satisfying $B_{\mathcal{K}}=0_{2\times1}$ and $C_{\mathcal{K}}=0_{1\times 2}$ are stationary points of problem \eqref{op_lqg}.
% --- 1. 实验设置 (只说一次) ---

We compare our proposed Algorithm~\ref{alg:Q_update} with the vanilla policy gradient method. For Algorithm~\ref{alg:Q_update}, we use a step size of $\eta=0.1$ and set the number of iterations to $N=14$. For the vanilla policy gradient method, a larger fixed step size of $\eta_2=10$ and the same iteration count $N$ are used.
To evaluate and compare the performance of both methods, we adopt the normalized relative cost error as the metric. For Algorithm~\ref{alg:Q_update}, the error at iteration $k$ is defined as
\[
\text{Error}_\text{Q-update} = \frac{J_{\mathcal{K}_0}(\mathbf{Q}_k, Q_k) - J(\mathcal{K}^*)}{J(\mathcal{K}^*)},
\]
where $J_{\mathcal{K}_0}(\mathbf{Q}_k, Q_k)$ is the cost at iteration $k$ and $J(\mathcal{K}^*)$ is the known optimal cost. For the vanilla policy gradient method, the error is calculated analogously as
\[
\text{Error}_\text{PG} = \frac{J(\mathcal{K}_k) - J(\mathcal{K}^*)}{J(\mathcal{K}^*)},
\]
where $J(\mathcal{K}_k)$ denotes the cost at iteration $k$ for the policy gradient method.
We conduct two experiments with initial conditions of increasing difficulty to systematically evaluate the ability of both algorithms to handle more challenging scenarios.

First, we initialize the controller at a point near the suboptimal stationary point $(B_\mathcal{K}=0_{2\times1},A_\mathcal{K}=-0.5I_2,C_\mathcal{K}=0_{1\times 2})$, as defined by:
\begin{equation*}
	\begin{aligned}\label{eq_stat_near}
		A_{\mathcal{K}_0} = \begin{bmatrix}
			-0.5 & 0\\
			0 & -0.5
		\end{bmatrix},\quad
		B_{\mathcal{K}_0} = \begin{bmatrix}
			0\\
			0.01
		\end{bmatrix},\quad
		C_{\mathcal{K}_0} = \begin{bmatrix}
			0 & -0.01
		\end{bmatrix}.
	\end{aligned}
\end{equation*}
The convergence results are presented in Figure~\ref{fig:comparison}(a). As shown, within a small number of iterations, the vanilla policy gradient method exhibits virtually no change and fails to make any visible progress, further confirming its inability to leave the basin of attraction of this suboptimal point in the early phase of optimization. In contrast, Algorithm~\ref{alg:Q_update} consistently reduces the cost error,  demonstrating its ability to successfully navigate away from this suboptimal region.

To create a more definitive test, we then initialize the controller exactly at the suboptimal stationary point 
\begin{equation*}
	\begin{aligned}\label{eq_stat_exact}
		A_{\mathcal{K}_0} = \begin{bmatrix}
			-0.5 & 0\\
			0 & -0.5
		\end{bmatrix},\quad
		B_{\mathcal{K}_0} = \begin{bmatrix}
			0\\
			0
		\end{bmatrix},\quad
		C_{\mathcal{K}_0} = \begin{bmatrix}
			0 & 0
		\end{bmatrix},
	\end{aligned}
\end{equation*}
a location where the gradient $\nabla J(\mathcal{K}_0)$ is identically zero.
The outcome is shown in Figure~\ref{fig:comparison}(b): as expected, the vanilla policy gradient method makes no progress. Notably, even under this extreme condition where first-order information vanishes, our algorithm identifies an efficient search direction and achieves cost reduction within just 14 iterations. Interestingly, we observe that the convergence curves in this case are nearly identical to those obtained when the initial point is set near (but not exactly at) the stationary point (Figure~\ref{fig:comparison}(a)). The difference in performance between the two cases is tiny, with the convergence curves being nearly identical. This remarkable consistency suggests that the algorithm's performance is robust to the initial conditions within this stationary region.
\begin{figure}[htbp]
	\centering
	\begin{subfigure}{0.48\linewidth}
		\includegraphics[width=\linewidth]{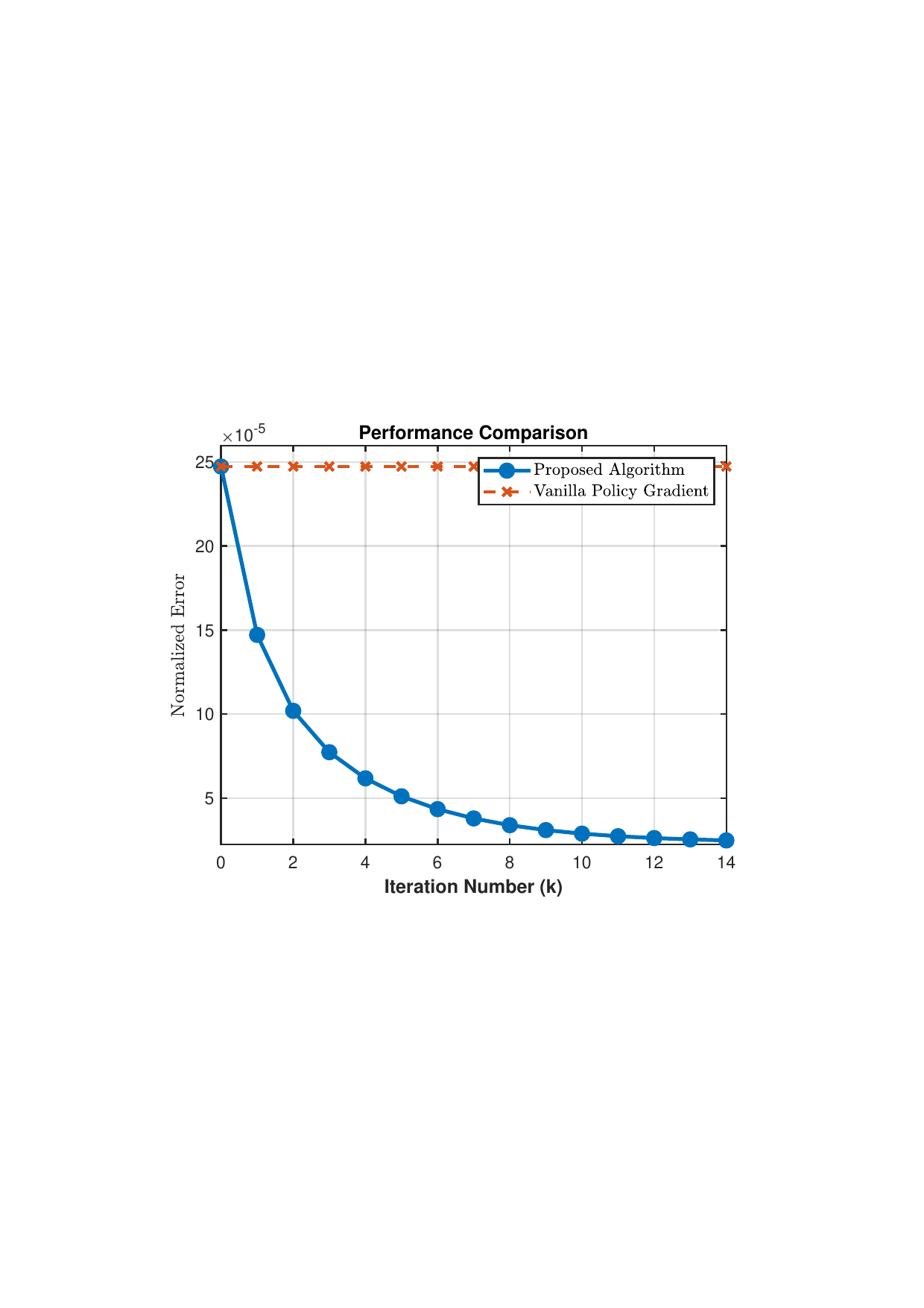} % 建议用不同的文件名
		\caption{Case 1: Initialized near the stationary point.}
		\label{fig:case1}
	\end{subfigure}
	\hfill % 在两个子图之间增加一些水平间距
	\begin{subfigure}{0.48\linewidth}
		\includegraphics[width=\linewidth]{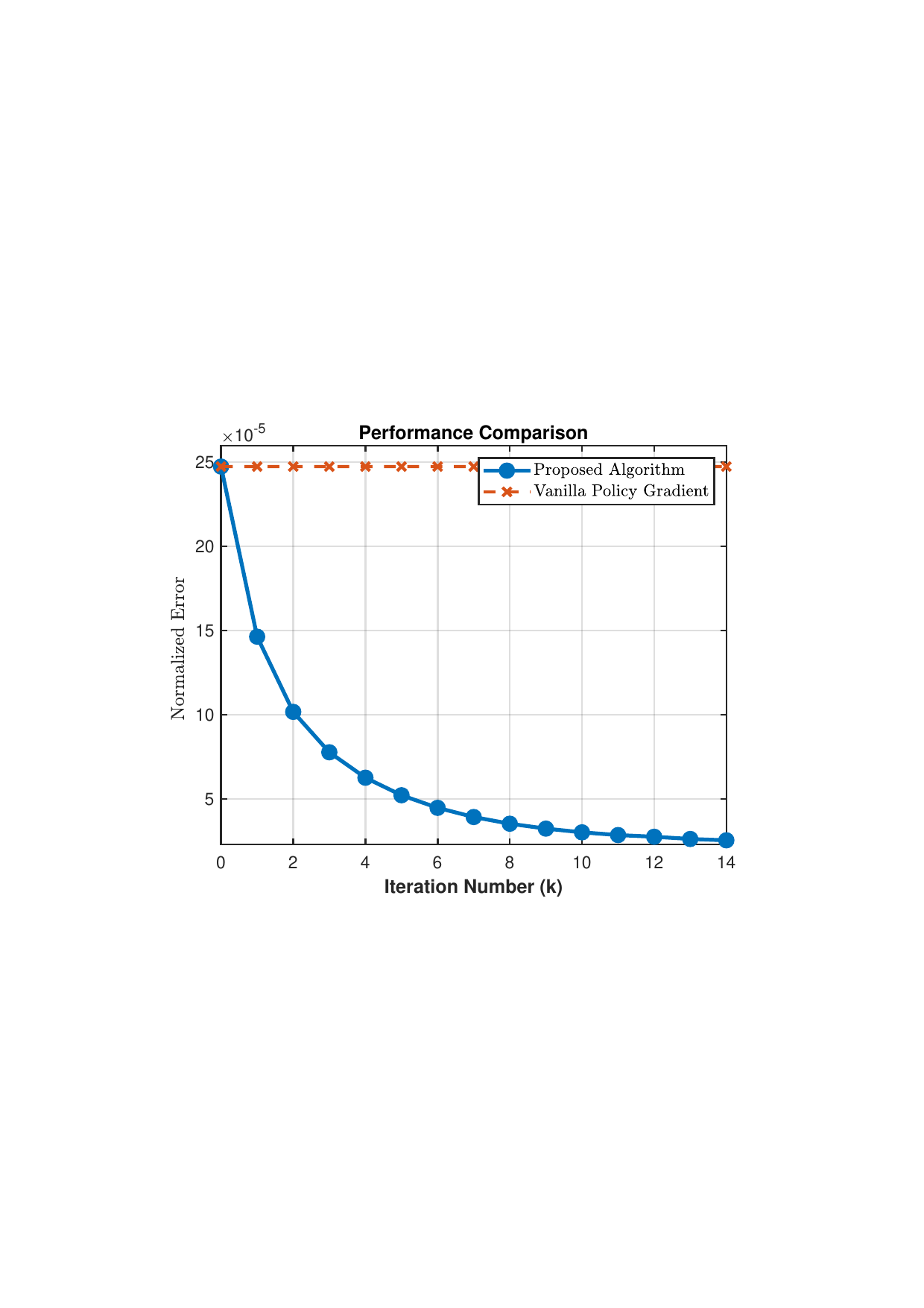} % 建议用不同的文件名
		\caption{Case 2: Initialized exactly at the stationary point.}
		\label{fig:case2}
	\end{subfigure}
	\caption{Convergence comparison between vanilla policy gradient and Algorithm~\ref{alg:Q_update} when initialized (a) near and (b) exactly at a sub-optimal stationary point.}
	\label{fig:comparison} % 整个figure的大标签
\end{figure}
% --- 4. 实验总结段落 (整合了您最后的思考) ---

Taken together, these experiments clearly show the advantages of our method. These experiments empirically validate that our method can generate effective search directions, even at locations where first-order gradient information is unavailable. This distinguishing feature sets our algorithm apart from the vanilla policy gradient method and provides evidence of its superior convergence behavior in complex, non-convex optimization landscapes.

\subsection{Example 2: Validation of Data-Driven Estimation Methods}\label{data_test}

This section validates the data-driven estimation methods proposed in Appendix ~\ref{data}, which constitute essential components of our algorithm. Specifically, we evaluate the accuracy of estimating: 1) the transfer function matrix $\mathbf{M}_{22}^{\mathcal{K}_0}(s)$, 2) the sensitivity function $\mathbf{S}_k(s)$, and 3) the residue term $\mathbf{e}_{(q+m_1)\times (q+m_2)}\left(\operatorname{Res}_{\Re(s)<0}(\mathbf{S}_k(s)),m_1,m_2\right)$ via zeroth-order methods. 
\subsubsection{Experimental Setup}
All the subsequent tests are conducted at the first iteration ($k=0$) of Algorithm~\ref{alg:Q_update}, with the system parameters fixed as specified in Example~1. Only the controller $\mathcal{K}_0$ is initialized with the following state-space parameters, which serve as the basis for all the subsequent calculations:
\begin{equation*}
%	\label{eq_init}
	A_{\mathcal{K}_0} = \begin{bmatrix}
		-0.5 & 0\\
		0 & -0.5
	\end{bmatrix},\quad
	B_{\mathcal{K}_0} = \begin{bmatrix}
		0\\
		1
	\end{bmatrix},\quad
	C_{\mathcal{K}_0} = \begin{bmatrix}
		0 & -1
	\end{bmatrix}.
\end{equation*}
Our objective is to verify that these data-driven techniques can accurately estimate the relevant parameters based on this fixed initial controller, while keeping the system parameters unchanged.

\subsubsection{ Validation of $\mathbf{M}_{22}^{\mathcal{K}_0}(s)$ Estimation}
First, we validate the system identification procedure for $\mathbf{M}_{22}^{\mathcal{K}_0}(s)$, as detailed in Section~\ref{est_M}. The real transfer function matrix is given by:
\begin{equation}\label{eq_M}
	\mathbf{M}_{22}^{\mathcal{K}_0}(s) = \begin{bmatrix}
		\text{TF}_{11}(s) & 0 & \text{TF}_{13}(s) \\
		0 & \text{TF}_{22}(s) & 0 \\
		\text{TF}_{31}(s) & 0 & \text{TF}_{33}(s)
	\end{bmatrix}=		\begin{bmatrix}
\frac{1.083 s^2 + 0.7083 s + 0.08333}{s^3 + 2 s^2 + 2.333 s + 0.4167} & 0 & \frac{ -1.083 s - 0.1667}{ s^3 + 2 s^2 + 2.333 s + 0.4167}\\
0 & \frac{1}{s + 0.5} & 0\\
	\frac{1.083 s + 0.1667}{s^3 + 2 s^2 + 2.333 s + 0.4167} & 0 & \frac{s^2 + 1.5 s + 0.5}{ s^3 + 2 s^2 + 2.333 s + 0.4167}
\end{bmatrix},
\end{equation}
where the non-zero transfer functions $\text{TF}_{ij}(s), i,j\in\{1,2,3\}$ are as listed in \eqref{eq_M}. We observe several zero entries in this matrix. These entries represent the decoupled input-output channels where specific inputs have no dynamic influence on certain outputs. Consequently, these channels exhibit a null frequency response, making them irrelevant for our data-driven fitting process. We therefore focus on identifying only the non-zero elements. To generate the dataset for this validation, we simulate the ideal, noise-free scenario discussed in Section~\ref{est_M}. Specifically, we compute the exact frequency response of each non-zero element of the true transfer function matrix $\mathbf{M}_{22}^{\mathcal{K}_0}(s)$ from \eqref{eq_M} at the 200 frequency points specified below. This resulting set of complex values serves as the empirical data for the fitting algorithm. This approach allows us to isolate and verify the intrinsic accuracy of the parameter fitting procedure itself, separate from the effects of stochastic noise.

The estimation is performed under the following settings:
\begin{itemize}[itemsep=1pt, topsep=2pt]
	\item \textbf{Model Order:} Numerator degree $n_{deg,1}=2$, denominator degree $n_{deg,2}=3$.
	\item \textbf{Frequency Sampling:} 200 frequency points sampled at uniform intervals from 
	0.1 to 100 rad/s.
	\item \textbf{Frequency Weighting:} Uniform weight $c_\omega = 1$.
\end{itemize}
The resulting errors between the real and estimated parameters are presented in Table~\ref{tab:m22_error}, which demonstrates a high degree of accuracy and validates the effectiveness of the fitting procedure.

% --- Placeholder table for M_22 errors. Please fill with your actual data. ---
\begin{table}[htbp]
	\centering
	\caption{Relative error between real and estimated coefficients for non-zero elements of $\mathbf{M}_{22}^{\mathcal{K}_0}(s)$.}
	\label{tab:m22_error}
	\begin{tabular}{lccccc} % l(eft) for the text column, c(entered) for data columns
		\toprule
		\textbf{Error Type} ($\|\text{Real Coeff}-\text{estimated Coeff}\|_\infty$) & $\text{TF}_{11}(s)$ & $\text{TF}_{13}(s)$ & $\text{TF}_{22}(s)$ & $\text{TF}_{31}(s)$ & $\text{TF}_{33}(s)$ \\
		\midrule
		Numerator Coeff. Error (\%) & 0.0067 & 0.0016 & 0 & 0.0013  & 0.0002  \\
		Denominator Coeff. Error (\%) & 0.0067 & 0.0014 & 0 & 0.0018 & 0  \\
		\bottomrule
	\end{tabular}
\end{table}

\subsubsection{ Validation of $\mathbf{S}_0(s)$ Estimation}
Next, we evaluate the projection-based estimation of the sensitivity function $\mathbf{S}_0(s)$ (from Section~\ref{est_S}). The fitted order model (FOM) is
\begin{equation*}
	\mathbf{S}_0(s)=\begin{bmatrix}
		\mathbf{S}_0(s)_{11} & 0 & \mathbf{S}_0(s)_{13}\\
		0 & 0 & 0 \\
		\mathbf{S}_0(s)_{31} & 0 & \mathbf{S}_0(s)_{33}
	\end{bmatrix}=		\begin{bmatrix}
	\frac{-0.8676 s^2 - 2.908 s - 1.671}{ s^3 + 2 s^2 + 2.333 s + 0.4167} & 0 & \frac{-1.513 s^2 - 2.916 s - 2.081}{s^3 + 2 s^2 + 2.333 s + 0.4167}\\
	0& 0& 0\\
	\frac{ 1.513 s^2 + 2.916 s + 2.081}{s^3 + 2 s^2 + 2.333 s + 0.4167} & 0 & \frac{1.774 s^2 + 2.921 s + 2.683}{ s^3 + 2 s^2 + 2.333 s + 0.4167}
\end{bmatrix}.
\end{equation*}
Similarly, we focus on identifying the non-zero elements. 
For the purpose of this validation, we leverage the known analytical form of the true sensitivity function $\mathbf{S}_0(s)$ to generate the real data. Instead of estimating the Laguerre coefficients via directional derivatives of the performance index as described in Section~\ref{est_S}, we compute them directly and with high precision by calculating the $\mathcal{H}_2$ inner product (i.e., projection) of the true $\mathbf{S}_0(s)$ onto each Laguerre basis function. This yields an ideal, error-free set of coefficients for constructing the Laguerre basis approximation, $\hat{\mathbf{S}}_0(s)$. The subsequent reduced-order fitting is then performed on the frequency response of this Laguerre approximation. This approach allows us to decouple the validation of the approximation and reduction steps from the numerical errors inherent in the derivative-based estimation process.

The system is first approximated using a Laguerre basis expansion with order varying from 1 to 11. Next, we fit a reduced-order transfer function model (numerator degree $n_{deg,1}=2$ and denominator degree $n_{deg,2}=3$) to the Laguerre-based approximation. Figure \ref{fig:error_convergence_comparison} illustrates the $\mathcal{H}_2$ norm of the estimation error for all the non-zero elements of $\mathbf{S}_0(s)$, showing both the error from the Laguerre expansion (as a function of order) and from the reduced-order model. The plots show that as the Laguerre basis order increases, the estimation error decreases rapidly for both the approaches. This confirms that the Laguerre basis efficiently represents system dynamics, and the reduced-order model accurately captures behavior when the Laguerre approximation is sufficiently accurate. 
In practice, a Laguerre basis of order around 10 to 15 suffices for a highly accurate reduced-order model.

\begin{figure}[htbp]
	\centering
	\begin{subfigure}[b]{0.48\textwidth}
		\centering
		\includegraphics[width=\textwidth]{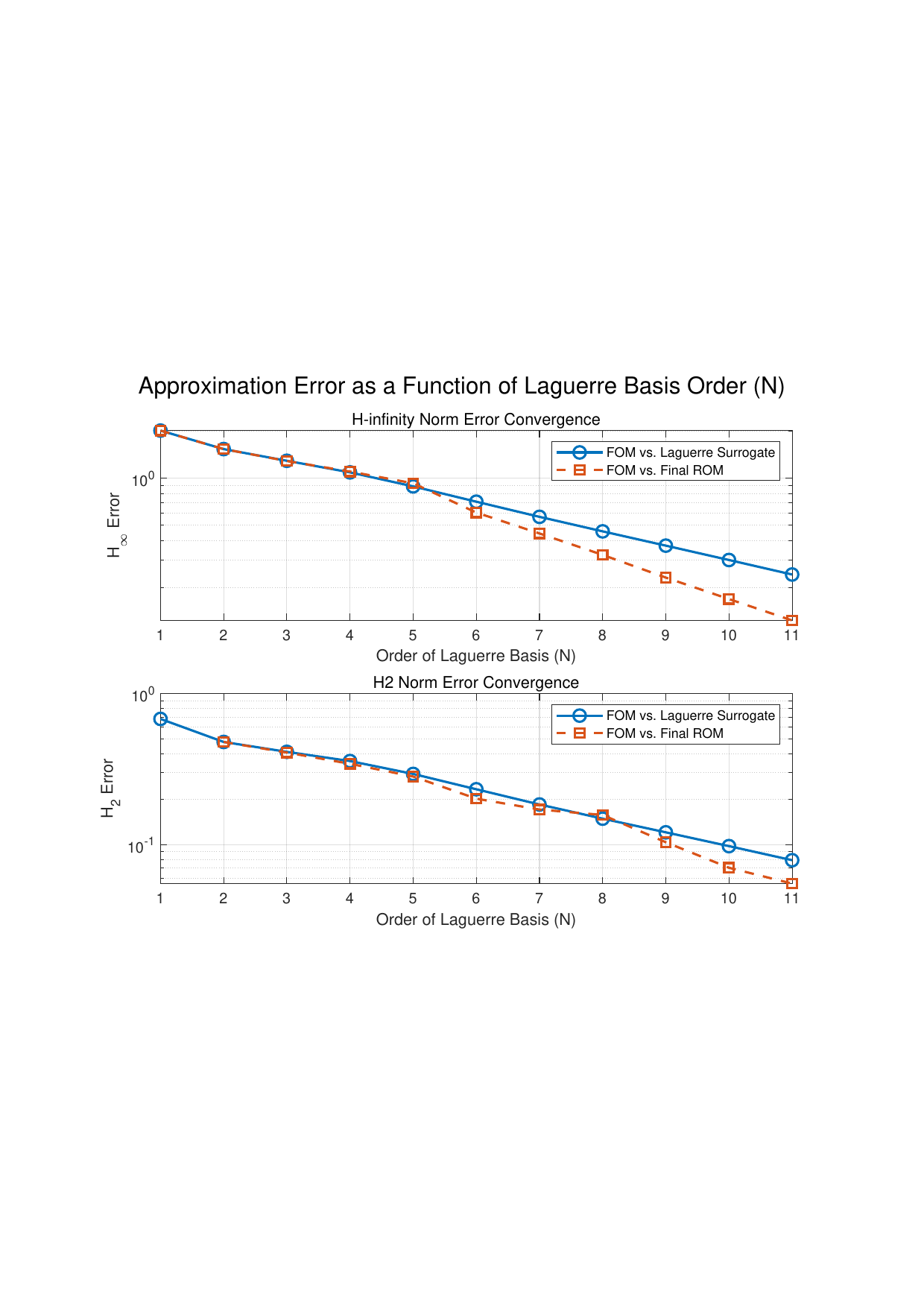}
		\caption{Estimation error for $\mathbf{S}_0(s)_{11}$.}
		\label{fig:s_11}
	\end{subfigure}
	\hfill
	\begin{subfigure}[b]{0.48\textwidth}
		\centering
		\includegraphics[width=\textwidth]{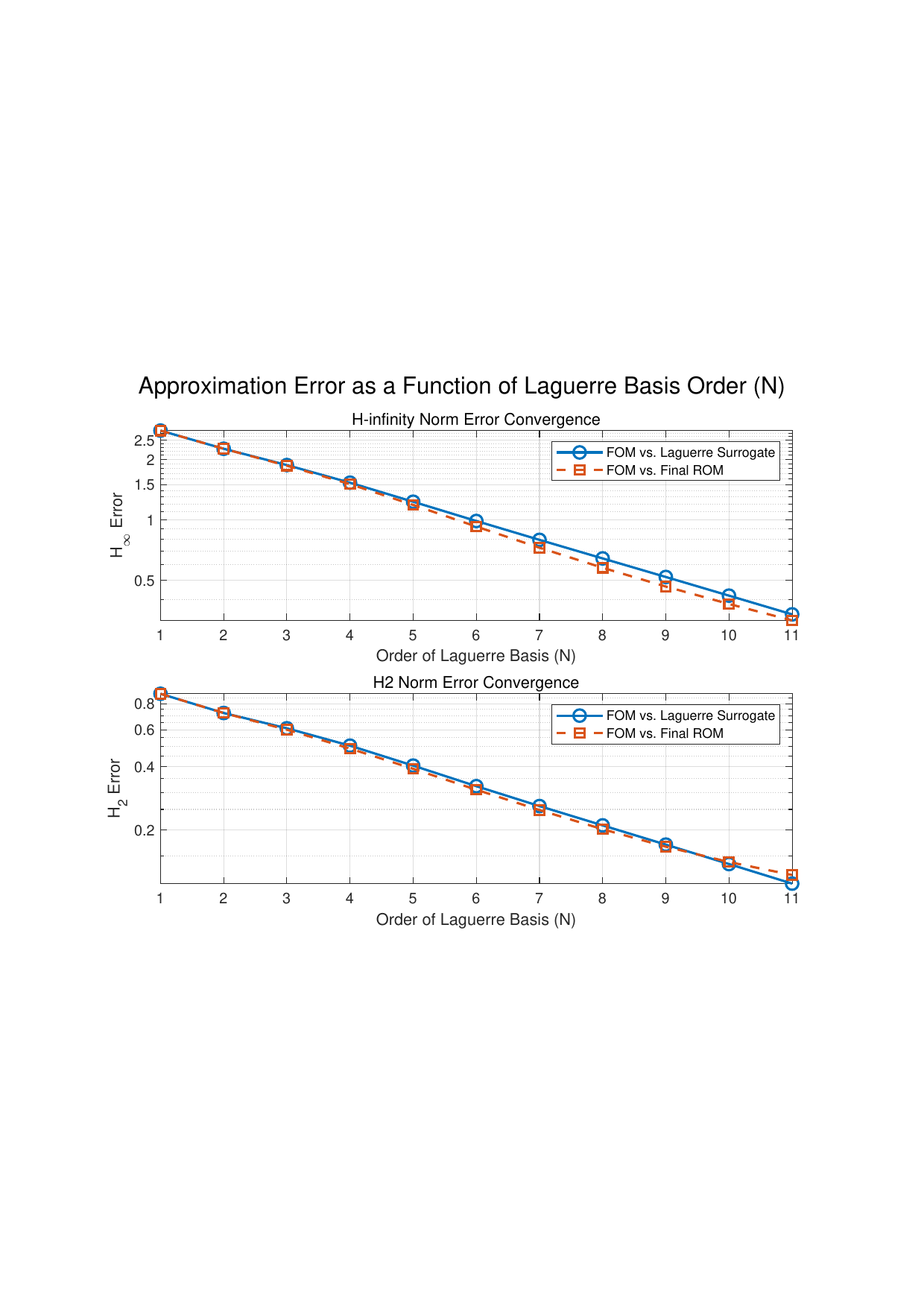}
		\caption{Estimation error for $\mathbf{S}_0(s)_{13}$.}
		\label{fig:s_13}
	\end{subfigure}
	
	\vspace{1em}
	
	\begin{subfigure}[b]{0.48\textwidth}
		\centering
		\includegraphics[width=\textwidth]{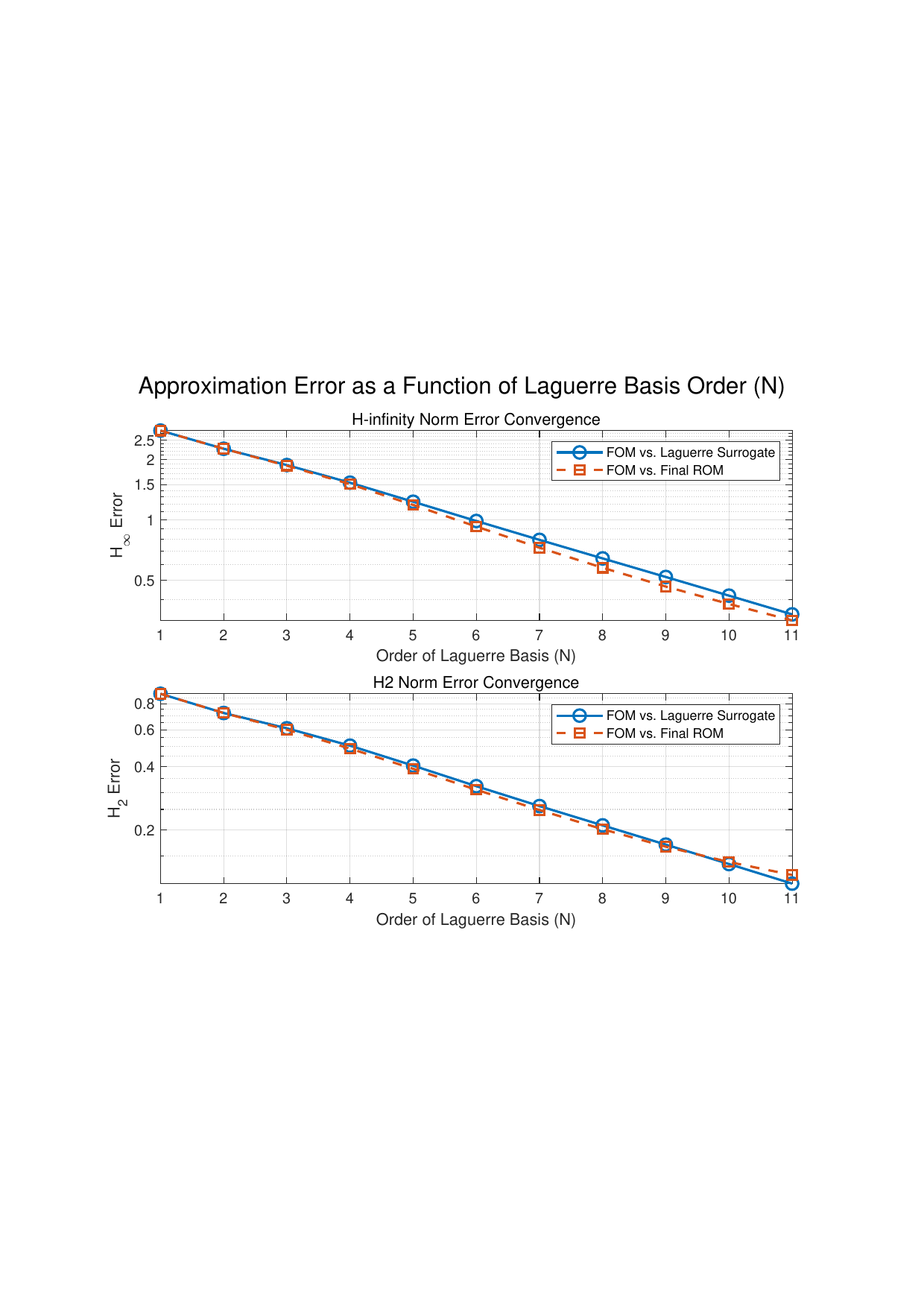}
		\caption{Estimation error for $\mathbf{S}_0(s)_{31}$.}
		\label{fig:s_31}
	\end{subfigure}
	\hfill
	\begin{subfigure}[b]{0.48\textwidth}
		\centering
		\includegraphics[width=\textwidth]{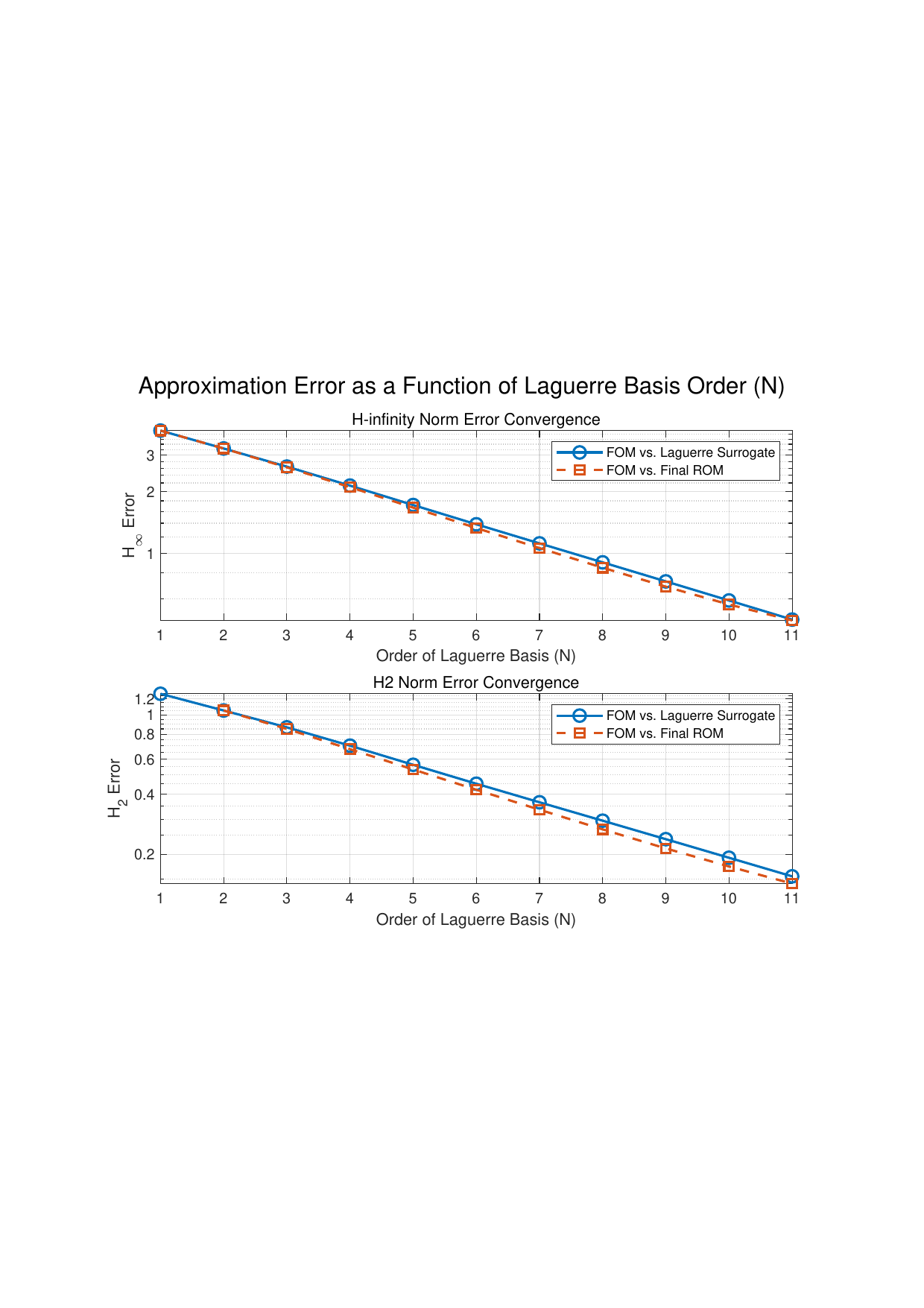}
		\caption{Estimation error for $\mathbf{S}_0(s)_{33}$.}
		\label{fig:s_33}
	\end{subfigure}
	
	\caption{Convergence of the $\mathcal{H}_2$ estimation error for the non-zero elements of the sensitivity matrix $\mathbf{S}_0(s)$: comparison between Laguerre basis expansion and reduced-order model (both as functions of Laguerre basis order).}
	\label{fig:error_convergence_comparison}
\end{figure}
\subsubsection{Validation of Residue Estimation}
Finally, we test the Monte Carlo based estimation of the residue term $\mathbf{e}_{3\times 3}\left(\operatorname{Res}_{\Re(s)< 0}(\mathbf{S}_0(s)),1,1\right)$ (from Section~\ref{est_res}). The key parameters for this simulation are set as:
\begin{itemize}[itemsep=1pt, topsep=2pt]
	\item \textbf{Radius of Sampling Sphere ($r$):} $1 \times 10^{-5}$.
	\item \textbf{Sample Number:} $m=10,100,1000,10000$.
\end{itemize}
Figure~\ref{fig:gradient_convergence_2x2} and Table~\ref{tab:error_comparison_simple} collectively demonstrate the method's performance. The heatmaps in Figure~\ref{fig:gradient_convergence_2x2} provide a qualitative visualization, showing the estimated gradient converging toward the true sparse structure as the number of Monte Carlo samples ($m$) increases. The quantitative results in Table~\ref{tab:error_comparison_simple} confirm this trend, with the relative error asymptotically decreasing from a highly inaccurate 110.8\% at $m=10$ to a precise 1.99\% at $m=10,000$.

\begin{figure}[htbp]
	\centering
	\begin{subfigure}[b]{0.48\textwidth}
		\centering
		\includegraphics[width=\linewidth]{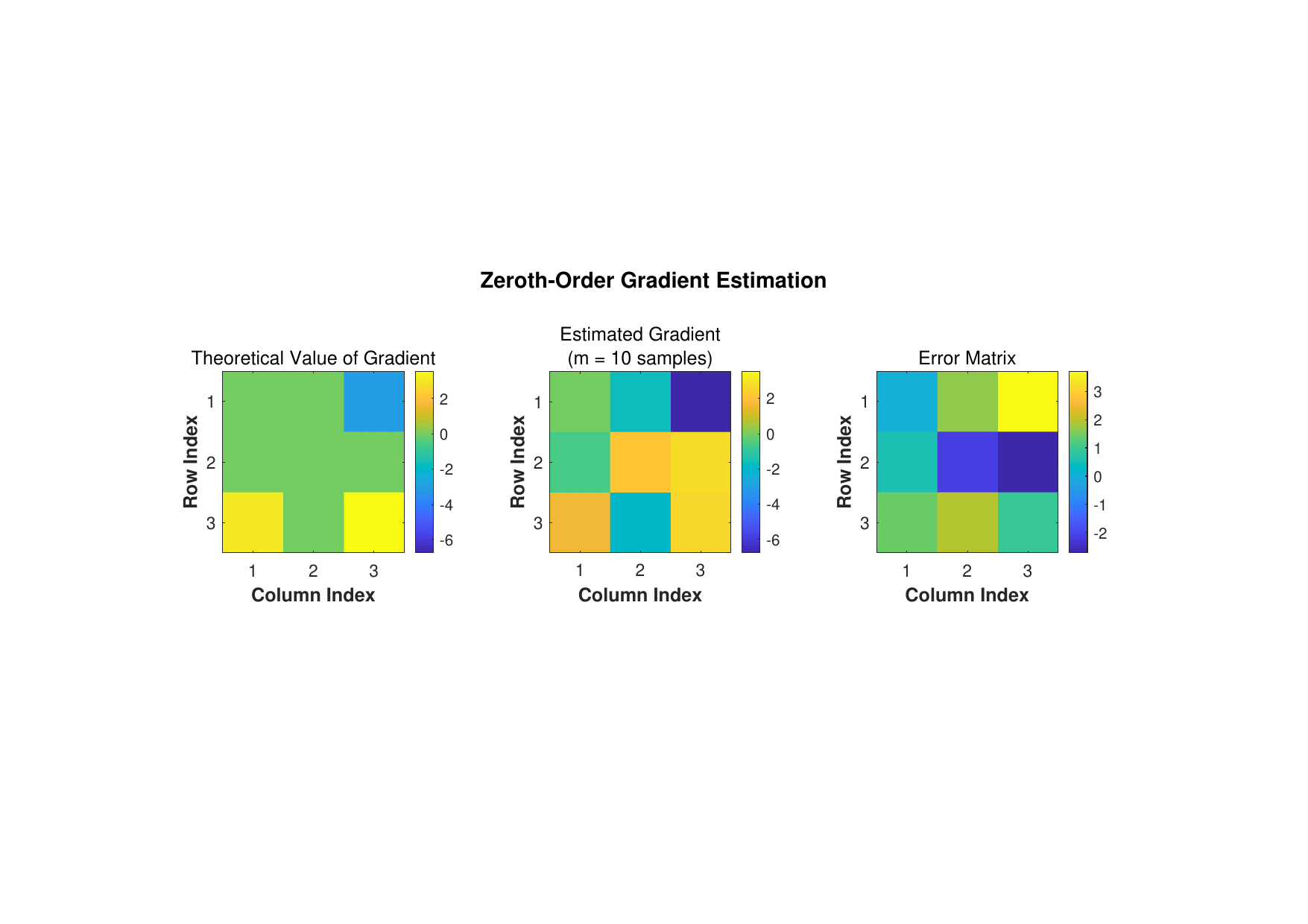}
		\caption{Estimation result for $m=10$.}
		\label{subfig:m10}
	\end{subfigure}
	\hfill
	\begin{subfigure}[b]{0.48\textwidth}
		\centering
		\includegraphics[width=\linewidth]{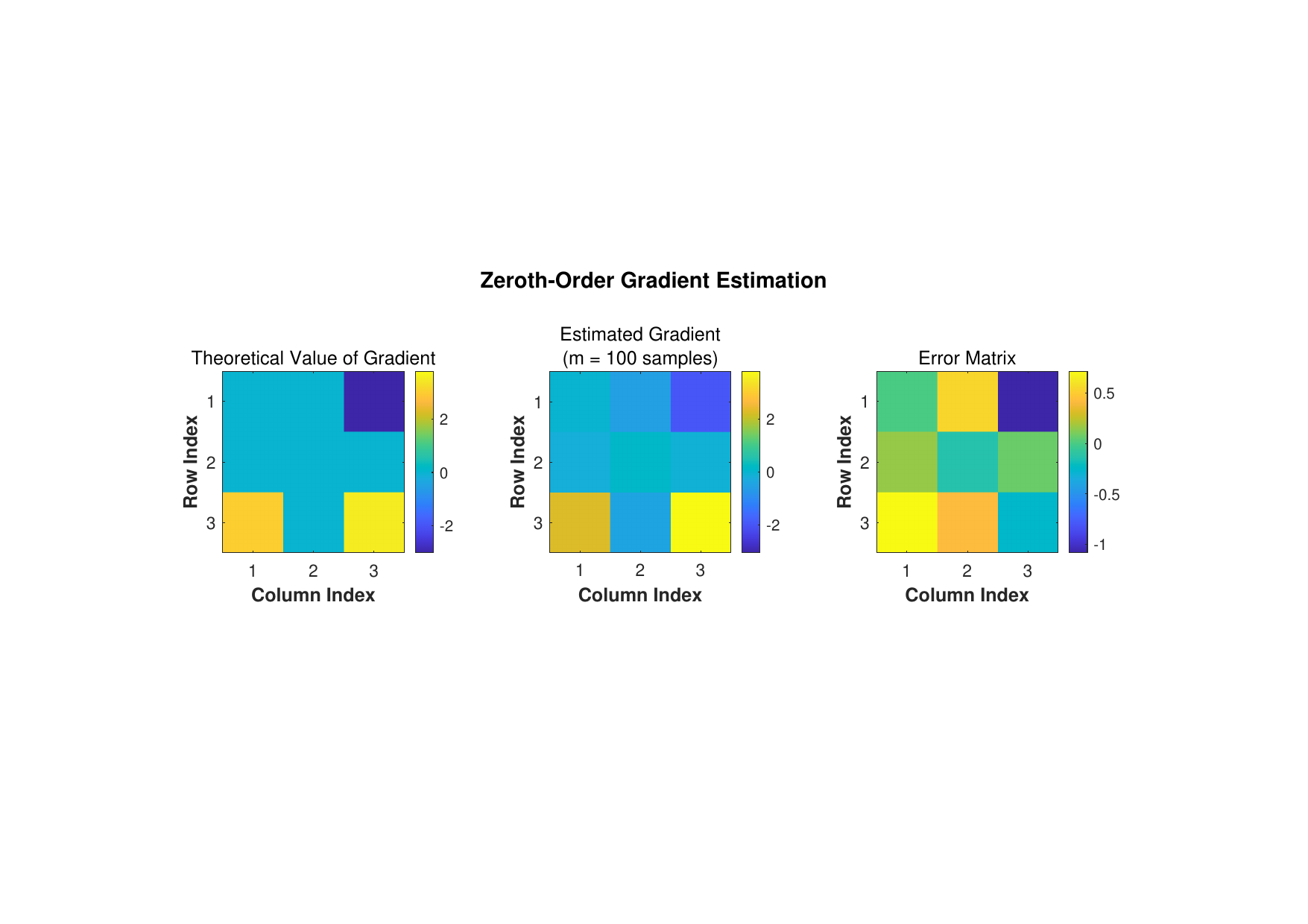}
		\caption{Estimation result for $m=100$.}
		\label{subfig:m100}
	\end{subfigure}
	
	\vspace{0.5cm}
	
	\begin{subfigure}[b]{0.48\textwidth}
		\centering
		\includegraphics[width=\linewidth]{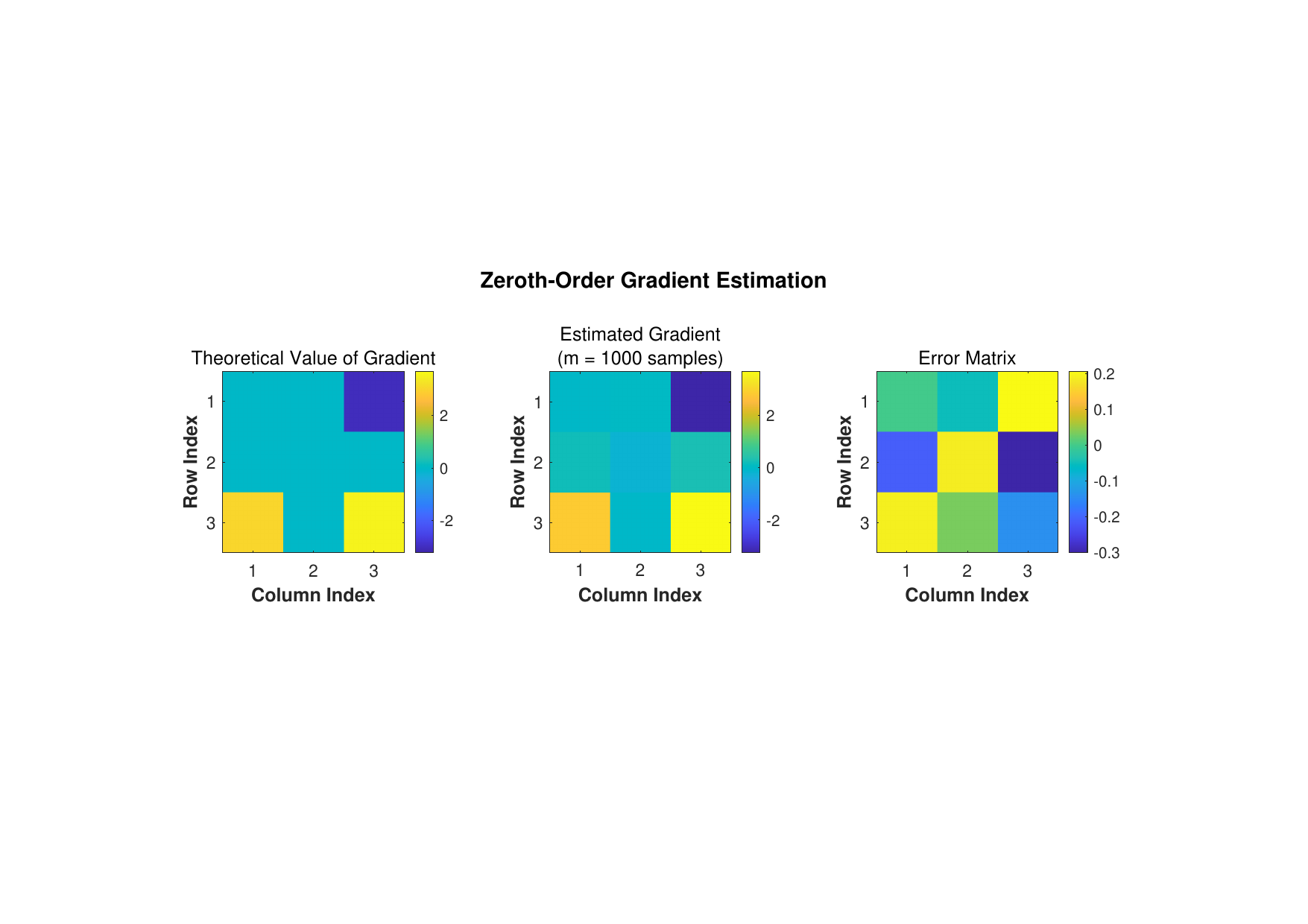}
		\caption{Estimation result for $m=1,000$.}
		\label{subfig:m1000}
	\end{subfigure}
	\hfill
	\begin{subfigure}[b]{0.48\textwidth}
		\centering
		\includegraphics[width=\linewidth]{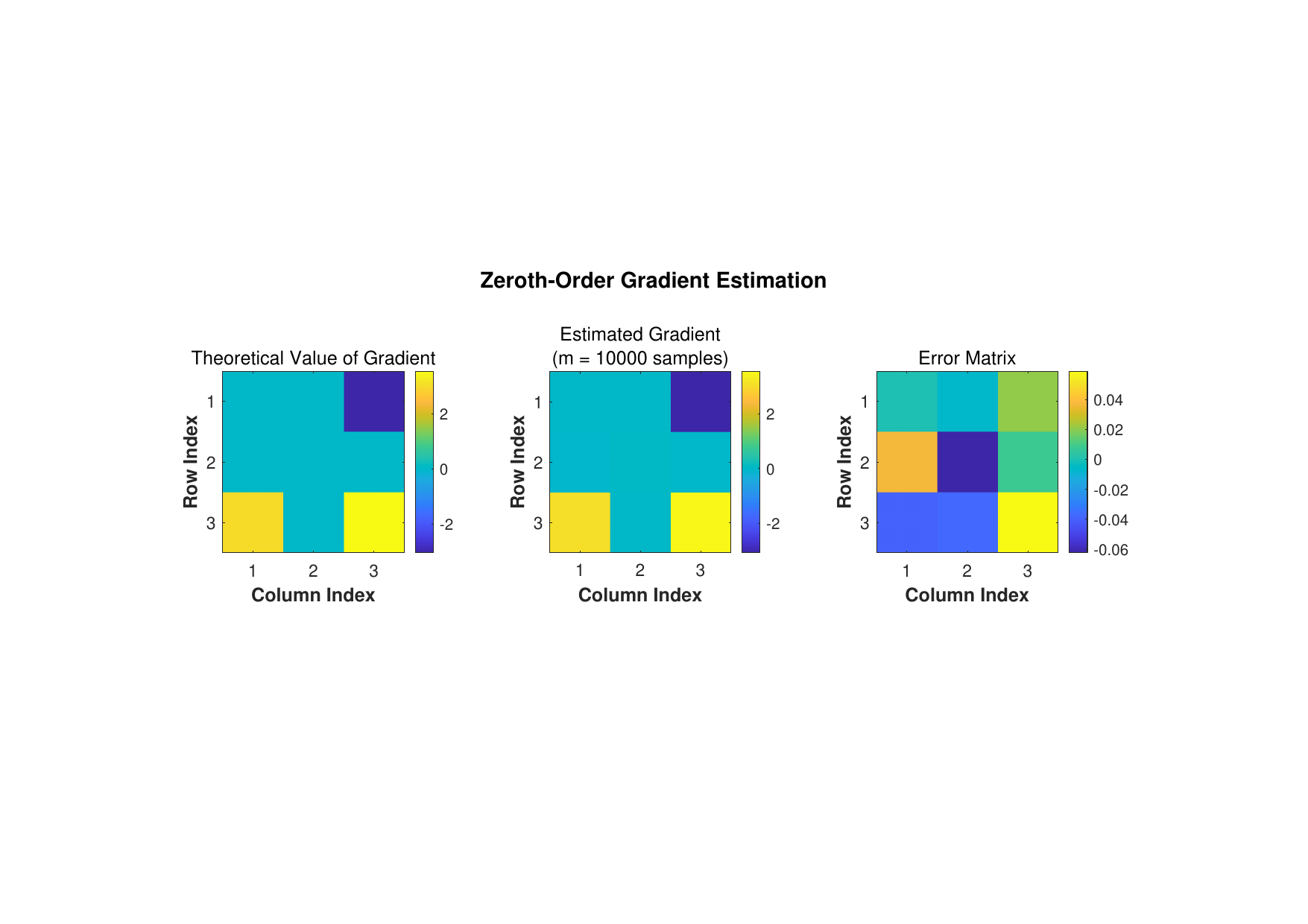}
		\caption{Estimation result for $m=10,000$.}
		\label{subfig:m10000}
	\end{subfigure}
	
	\caption{Qualitative convergence of the zeroth-order gradient estimate as the number of Monte Carlo samples ($m$) increases.}
	\label{fig:gradient_convergence_2x2}
\end{figure}
\begin{table}[htbp]
	\centering
	\caption{Quantitative relative error of the zeroth-order gradient estimation under different sample sizes.}
	\label{tab:error_comparison_simple}
	\begin{tabular}{lcccc} % l(eft) for the text row header, c(entered) for data
		\toprule
		\textbf{Sample Size ($m$)} & 10 & 100 & 1,000 & 10,000 \\
		\midrule
		\textbf{Relative Error ($\|\text{Error Matrix}\|_F/\|\text{Real Value}\|_F$) (\%)} & 110.82 & 18.84 & 7.03 & 1.99 \\
		\bottomrule
	\end{tabular}
\end{table}

\section{Conclusion}
This paper investigates the direct policy optimization of Linear Quadratic Gaussian (LQG) control. We proposed a verifiable necessary and sufficient condition for global optimality, which serves as a computationally tractable certificate and offers insights into the emergence of suboptimal stationary points in traditional parameterizations. Based on this condition, we developed a gradient-based algorithm in the infinite-dimensional $\mathcal{RH}_\infty$ space and provided a proof of its global convergence.
Future work should address the scalability of the proposed algorithm to large-scale systems and its robustness against unmodeled dynamics. Furthermore, the preliminary data-driven extension discussed in the appendix suggests a potential path toward model-free implementations, which is a valuable direction for applications where system models are unavailable or inaccurate.
In summary, this study analyzes the LQG problem from a policy optimization perspective, contributing to the understanding of the optimization landscape and algorithm design in this area.

\appendix

\section*{Appendix}
\section{Data-Driven Estimation Procedures for Key Operators}\label{data}

Beyond the convergence analysis of Algorithm~\ref{alg:Q_update}, practical implementation relies on accurately estimating several key system operators when the system parameters are unknown.
This subsection briefly describes the estimation methods for the following operators based on input-output data (observation $y_t$ and control input $u_t$ of system \eqref{op_lqg}):
\begin{enumerate}
	\item the interconnection operator \(\mathbf{M}_{22}^{\mathcal{K}_0}(s)\);
	\item the closed-loop sensitivity operator $\mathbf{S}_k(s)$ in Algorithm~\ref{alg:Q_update};
	\item the residue terms, defined as $		\mathbf{e}_{(q+m_1)\times (q+m_2)}\left(\operatorname{Res}_{\Re(s)< 0}(\mathbf{S}_k(s)),m_1,m_2\right).$
\end{enumerate}
Detailed numerical analyses and comprehensive performance evaluations of these estimation methods are left for future work.

\subsubsection{Estimation of $\mathbf{M}_{22}^{\mathcal{K}_0}$}\label{est_M}

This subsection aims to estimate the transfer function matrix $\mathbf{M}_{22}^{\mathcal{K}_0}(s)$ via input-output identification. To ensure sufficient excitation of the internal filter dynamics during this process, we augment the filter equation of the overall system~\eqref{op_lqg} by introducing an explicit auxiliary input $u_{\hat{x} t}$:
\[
\dot{\hat{x}}_t = A_{\mathcal{K}} \hat{x}_t + B_{\mathcal{K}} y_t + u_{\hat{x} t}.
\]
Here, the filter is an artificial component introduced for control synthesis purposes. Its state $\hat{x}_t$ is entirely accessible in simulation (or in numerical experiments), and the input $u_{\hat{x} t}$ can therefore be freely specified to provide targeted excitation for identification, without affecting the underlying physical plant.  
By introducing the auxiliary input $u_{\hat{x} t}$ into the filter dynamics of~\eqref{op_lqg} and considering a fixed controller $\mathcal{K} = \mathcal{K}_0$, the resulting augmented system has composite input $(u_t, u_{\hat{x} t})$ and output $(y_t, \hat{x}_t)$. The transfer matrix $\mathbf{M}_{22}^{\mathcal{K}_0}(s)$ is thus defined as the mapping from the input to the output, consistent with the structure in~\eqref{m_k}.

However, the system \eqref{op_lqg} is subject to process noise $\omega_t$ and measurement noise $v_t$. It is beneficial to examine the system’s input-output behavior in the frequency domain to account for these stochastic effects in both analysis and identification. By taking the Laplace transform of the system equations \eqref{op_lqg} (with the auxiliary input $u_{\hat{x} t}$ included), we obtain the following input-output relationship:
\begin{equation}\label{resp_fre}
	y(s) = \mathbf{M}_{22}^{\mathcal{K}_0}(s)\, u_\omega(s) + w(s),
\end{equation}
where $y(s)$ is the Laplace transform of output signal $(y_t, \hat{x}_t)$, $u_\omega(s)$ is the Laplace transform of the composite input $(u_t, u_{\hat{x} t})$, $\mathbf{M}_{22}^{\mathcal{K}_0}(s)$ denotes the noise-free transfer operator of interest, and $w(s)$ aggregates the effects of all the noise sources. This frequency-domain representation~\eqref{resp_fre} provides a convenient starting point for empirical identification, as it directly relates the observable output to the known excitation (input) and delineates the influence of stochastic disturbances.

We sequentially apply sinusoidal excitations to each input channel of $(u_t,u_{\hat{x}t})$ for system identification. Specifically, for each $h = 1, \ldots, q + m_2$, we use the excitation
\[
u_\omega(s) = c_\omega \frac{\omega}{s^2 + \omega^2} e_{1 \times (q + m_2)}(1, h),
\]
where $\frac{\omega}{s^2 + \omega^2}$ is the Laplace transform of $\sin(\omega t)$.
Owing to the frequency response properties of linear systems~\cite{10.5555/557291}, the response of the 
$i$-th noise-free output to input $u_\omega(s)$ is
\begin{equation}
	\mathcal{L}^{-1}(y_\omega(s)_i) = c_\omega \left| \mathbf{M}_{22}^{\mathcal{K}_0}(j\omega)_{i, h} \right| \sin\left( \omega t + \arg \mathbf{M}_{22}^{\mathcal{K}_0}(j\omega)_{i, h} \right),
\end{equation}
where $\mathcal{L}^{-1}$ denotes the inverse Laplace transform and $y_\omega(s)_i$ is the $i$-th entry of the noise-free output. Provided that $c_\omega$ is sufficiently large relative to the spectral norm of $w(s)$, the noise term becomes negligible; after normalizing by $c_\omega$, we obtain
\begin{equation}
	\frac{y(s)}{c_\omega} \approx \mathbf{M}_{22}^{\mathcal{K}_0}(s)\frac{\omega}{s^2 + \omega^2} e_{1 \times (q + m_2)}(1, h) ,
\end{equation}
enabling accurate empirical estimation of the frequency response for each input-output pair over a frequency grid.
Once the empirical frequency response data have been collected, we fit each entry of the transfer function matrix $\mathbf{M}_{22}^{\mathcal{K}_0}(s)$ individually by parameterizing it as a rational function. For each input channel $h$ and output channel $i$, i.e., for each $(i,h)$-th entry of $\mathbf{M}_{22}^{\mathcal{K}_0}(s)$, we consider the following parameterization:
\begin{equation}
	\mathbf{M}_{22}^{\mathcal{K}_0}(s)_{i,h} \approx \frac{N(s)}{D(s)} = \frac{\sum_{k=0}^{n_{\mathrm{deg}, 1}} a_k s^k}{\sum_{k=0}^{n_{\mathrm{deg}, 2}} b_k s^k},
\end{equation}
where $N(s)$ and $D(s)$ are polynomials of degree at most $n_{\mathrm{deg},1}$ and $n_{\mathrm{deg},2}$, respectively. Here, the index $i$ corresponds to the $i$-th output (either $y_t$ or an entry of $\hat{x}_t$) and $h$ to the $h$-th input (either $u_t$ or an entry of $u_{\hat{x} t}$) in the composite system.
The frequency-domain least-squares problem is formulated as
\begin{equation}
	\min_{a_k, b_k} \left\| \frac{\sum_{k=0}^{n_{\mathrm{deg}, 1}} a_k s^k}{\sum_{k=0}^{n_{\mathrm{deg}, 2}} b_k s^k} - \mathbf{M}_{22}^{\mathcal{K}_0}(s)_{i,h} \right\|_{\mathcal{H}_2}^2.
\end{equation}
This problem is nonlinear in the polynomial coefficients due to the rational form. However, in the absence of noise, both the empirical output data and the target transfer function can be matched exactly. Hence, the global optimum achieves zero objective value (i.e., the rational function equals the actual transfer function at all frequencies). In this ideal setting, the optimization problem above reduces to solving
\begin{equation}
	\sum_{k=0}^{n_{\mathrm{deg}, 1}} a_k s^k - \mathbf{M}_{22}^{\mathcal{K}_0}(s)_{i,h} \sum_{k=0}^{n_{\mathrm{deg}, 2}} b_k s^k = 0,
\end{equation}
for all the frequency points considered. This motivates the use of the following linearized least-squares surrogate:
\begin{equation}
	\min_{a_k, b_k} \left\| \sum_{k=0}^{n_{\mathrm{deg}, 1}} a_k s^k - \mathbf{M}_{22}^{\mathcal{K}_0}(s)_{i,h} \sum_{k=0}^{n_{\mathrm{deg}, 2}} b_k s^k \right\|_{\mathcal{H}_2}^2,
\end{equation}
which is convex in the coefficients and shares the same set of global optimizers as the original problem under noise-free data. Therefore, this convex reformulation preserves the solution in the ideal (noiseless) scenario and remains tractable in practice. This process is repeated for all the $i = 1, \ldots, q+m_1$ and $h = 1, \ldots, q+m_2$, and the final transfer function matrix is assembled from the individually estimated entries. Additional implementation details can be found in~\cite{6198975,6198973,6198977,650025}.

\subsubsection{Estimation of Sensitivity Operators}\label{est_S}
 $\mathbf{S}_k(s)$ represents the Fréchet derivative of the performance index $J_{\mathcal{K}_0}(\mathbf{Q}_k, Q_k)$ with respect to the operator variable $\mathbf{Q}_k$, as established in Theorem~\ref{thm:gradient}. Importantly, these operators do not correspond to physical transfer functions. Accordingly, they lack direct physical realizations and cannot be identified through system excitation and output measurement, as is possible for $\mathbf{M}_{22}^{\mathcal{K}_0}$.

We estimate the operator $\mathbf{S}_k(s)$ using an orthogonal basis expansion. The underlying principle is that the operator can be represented as an infinite sum of orthogonal basis functions. In practice, we approximate the operator by truncating this series to a finite number of terms, which transforms the estimation problem into the more tractable task of computing a finite set of generalized coefficients. Each coefficient for this expansion is calculated as the directional derivative of the performance index $J_{\mathcal{K}_0}(\mathbf{Q}_k,Q_k)$ along the direction of its corresponding basis function. This process yields a finite-series approximation of the operator, which serves as a surrogate. Finally, for practical implementation, this surrogate operator is converted into a standard, reduced-order rational transfer function via frequency-domain least-squares fitting, as detailed below.

In our implementation, we specifically select the Laguerre basis functions~\cite{AGAMENNONI1992815} for this expansion. For each $(i,j)$ entry of $\mathbf{S}_k(s)$, we introduce the following perturbation:
\begin{equation}
	\begin{aligned}
		(\Delta\mathbf{Q},\, \Delta Q) := \left( c\, \frac{\sqrt{2a}}{s+a} \left(\frac{s-a}{s+a}\right)^k e_{(q+m_1)\times (q+m_2)}(i,j),\, 0 \right), \\
		i=1,2,\dots,q+m_1,\quad j=1,2,\dots,q+m_2, \quad k = 0, 1, 2, \dots
	\end{aligned}
\end{equation}
By substituting this into $J_{\mathcal{K}_0}(\mathbf{Q}_k+\Delta \mathbf{Q}, Q_k+\Delta Q)$, the corresponding change in performance is
\begin{equation}\label{per_di}
	J_{\mathcal{K}_0}(\mathbf{Q}_k+\Delta \mathbf{Q}, Q_k+\Delta Q) = J_{\mathcal{K}_0}(\mathbf{Q}_k, Q_k) + 2\operatorname{tr}\left( c\int_{-\infty}^{\infty} \Delta\mathbf{Q}(j\omega)^* \mathbf{S}_k (j\omega) d\omega \right) + o(c).
\end{equation}
In accordance with best approximation theory~\cite{Gautschi2012}, each $(i, j)$ entry of $\mathbf{S}_k(s)$ can thus be expressed as a linear combination of the orthogonal Laguerre basis functions $\frac{\sqrt{2a}}{s+a} \left(\frac{s-a}{s+a}\right)^k$, where the coefficients $c_k(i,j)$ are given by the $\mathcal{H}_2$ inner product (projection) of $\mathbf{S}_k(s)_{i,j}$ onto the corresponding basis function. In practice, these coefficients are numerically evaluated as the finite-difference approximation of the directional derivative of $J_{\mathcal{K}_0}$ with respect to $c$ for each basis direction (as reflected in~\eqref{per_di}):
\[
c_k(i,j) = \lim_{c\to 0} \frac{J_{\mathcal{K}_0}(\mathbf{Q}_k+\Delta \mathbf{Q}, Q_k+\Delta Q) - J_{\mathcal{K}_0}(\mathbf{Q}_k, Q_k)}{2c}.
\]
With these coefficients, each entry $\mathbf{S}_k(s)_{i,j}$ is approximated as a truncated expansion:
\[
\hat{\mathbf{S}}_k(s)_{i,j} = \sum_{k=0}^{N} c_k(i,j) \frac{\sqrt{2a}}{s+a} \left( \frac{s-a}{s+a} \right)^k,
\]
where $N$ is the chosen truncation order.
To address the potential complexity arising from high-order models, each entry $\mathbf{S}_k(s)(i,j)$ is further approximated by a reduced-order rational transfer function of the form
\[
\frac{\sum_{k=0}^{n_{\mathrm{deg}, 1}} a_k s^k}{\sum_{k=0}^{n_{\mathrm{deg}, 2}} b_k s^k}
\]
via a frequency-domain least-squares fitting. Here, $n_{\mathrm{deg},1}$ and $n_{\mathrm{deg},2}$ are user-specified maximal orders. Specifically, for each input-output pair $(i, j)$, we solve
\begin{equation}
	\min_{a_k, b_k} \left\| \frac{\sum_{k=0}^{n_{\mathrm{deg}, 1}} a_k s^k}{\sum_{k=0}^{n_{\mathrm{deg}, 2}} b_k s^k} - \hat{\mathbf{S}}_k(s)_{i,j} \right\|_{\mathcal{H}_2}^2.
\end{equation}
This can also be reformulated as the following convex optimization problem, provided $N$ is sufficiently large to make the truncation error negligible:
\begin{equation}
	\min_{a_k, b_k} \left\| \sum_{k=0}^{n_{\mathrm{deg},1}} a_k s^k - \hat{\mathbf{S}}_k(s)_{i,j} \sum_{k=0}^{n_{\mathrm{deg},2}} b_k s^k \right\|_{\mathcal{H}_2}^2.
\end{equation}
This optimization is carried out for all $i=1,2,\dots,q+m_1$ and $j=1,2,\dots,q+m_2$, resulting in the reduced-order rational approximations for all entries of $\mathbf{S}_k(s)$. Each entry thus takes the form $\sum_{k=0}^{n_{\mathrm{deg},1}} a_k s^k / \sum_{k=0}^{n_{\mathrm{deg},2}} b_k s^k$, and the matrix-valued transfer function $\mathbf{S}_k(s)$ are assembled accordingly.

\subsubsection{Estimation of Residue Terms}\label{est_res}
To estimate the residue term $\mathbf{e}_{(q+m_1)\times (q+m_2)}\left(\operatorname{Res}_{\Re (s)< 0}(\mathbf{S}_k(s)),m_1,m_2\right)$, we employ zeroth-order (gradient-free) methods~\cite{Flaxman2004OnlineCO, nesterov2017random}. 

According to Theorem~\ref{thm:gradient}, we have
\begin{equation}
	\nabla_{Q_k} J_{\mathcal{K}_0}(\mathbf{Q}_k,Q_k)=2\mathbf{e}_{(q+m_1)\times (q+m_2)}\left(\operatorname{Res}_{\Re (s)< 0}(\mathbf{S}_k(s)),m_1,m_2\right).
\end{equation}
Using zeroth-order methods (see, e.g., Sec. D.3 in \cite{Fazel2018GlobalCO}), the gradient can be estimated as
\begin{equation}\label{est_zero}
	\nabla_{Q_k} J_{\mathcal{K}_0}(\mathbf{Q}_k,Q_k) = \frac{d}{2r^2} \mathbb{E}_{U \sim \mathrm{Unif}(\mathbb{S}_r)}\left[(J_{\mathcal{K}_0}(\mathbf{Q}_k,Q_k+U)\,-J_{\mathcal{K}_0}\left(\mathbf{Q}_k, Q_k-U\right)) U\right],
\end{equation}
where $\mathbb{S}_r$ denotes the surface of the sphere with radius $r$ (under the spectral norm) in $\mathbb{R}^{(q+m_1)\times (q+m_2)\mid m_1\times m_2}$, $\mathrm{Unif}(\mathbb{S}_r)$ denotes the uniform distribution over $\mathbb{S}_r$, and $d$ is the dimension of $Q_k$.
Given independent and identical distributed samples $U_1,\ldots,U_m \sim \mathrm{Unif}(\mathbb{S}_r)$, we approximate the expectation in~\eqref{est_zero} empirically by
\begin{equation}
	\hat{\nabla}_{Q_k}J_{\mathcal{K}_0}(\mathbf{Q}_k,Q_k) = \frac{1}{m} \sum_{i=1}^m \frac{d}{2r^2} (J_{\mathcal{K}_0}\left(\mathbf{Q}_k, Q_k+U_i\right)-J_{\mathcal{K}_0}\left(\mathbf{Q}_k, Q_k-U_i\right)) U_i. \label{emp_est}
\end{equation}
This estimation methodology has been widely adopted in the optimal control literature~\cite{pmlr-v80-fazel18a,Fazel2018GlobalCO,doi:10.1137/20M1382386,pmlr-v144-tang21a}.

\section{Second-Order Optimality Condition}\label{hessian2}

By combining Theorem \ref{main_res} in this paper with Theorem 2 in \cite{9993305}, we characterize the optimality condition in terms of the second-order derivatives of the objective function in \eqref{op_lqg}. This result is formalized in the following corollary. To simplify the presentation, for any controller $\mathcal{K} \in \mathbb{K}_q$ and any positive integer $p$, we define the augmented controller
\begin{equation*}
	\widetilde{\mathcal{K}}(\lambda) :=
	\left[\begin{array}{c|cc}
		0 & C_{\mathcal{K}} & 0\\ \hline
		B_{\mathcal{K}} & A_{\mathcal{K}} & 0\\
		0 & 0 & \lambda I_{p}
	\end{array}\right], \lambda\in\mathbb{R}
\end{equation*} 
and the corresponding set
\begin{equation*}
	\mathcal{M}_{p}:=\!\!\left\{\Delta \!\!\!=\!\!\!
	\left[\begin{array}{c|cc}
		0 & 0 & \Delta_{C}\\ \hline
		0 & 0 & \Delta_{A1}\\
		\Delta_{B} & \Delta_{A2} & \Delta_{A3}
	\end{array}\right], \Delta_B\!\in\mathbb{R}^{p\times m_2}, \Delta_{A1}\!\in\mathbb{R}^{q\times p},\Delta_{A2}\!\in\mathbb{R}^{p\times q},\Delta_{A3}\!\in\mathbb{R}^{p\times p},\Delta_C\!\in\mathbb{R}^{m_1\times p}\right\}.
\end{equation*}

\begin{corollary}\label{hessian}
	Let $\mathcal{K} \in \mathbb{K}_q$ and $p$ be a positive integer. Then, $\mathcal{K}$ is a global optimal solution to the problem~\eqref{op_lqg} if and only if the second-order directional derivative of $J\!\left(\widetilde{\mathcal{K}}(\lambda)\right)$ with respect to $\Delta$ is identically zero, i.e.,
	\begin{equation}\label{value2}
			D_{\Delta}^2J\left(\widetilde{\mathcal{K}}(\lambda)\right) = 0, \quad \forall \lambda< 0,\ \Delta\in\mathcal{M}_{p}.
	\end{equation}
	Moreover, if $\mathcal{K}$ is not a global optimal solution to problem \eqref{op_lqg}, there exist $\Delta\in\mathcal{M}_{p}$ and $\lambda< 0$ such that $D_{\Delta}^2J(\widetilde{\mathcal{K}}(\lambda)) < 0$.
\end{corollary}

\begin{proof}
Based on Lemma 3 of \cite{9993305},  $D_{\Delta}^2 J\left(\widetilde{\mathcal{K}}(\lambda)\right)$ in the direction of
\begin{equation*}
	\Delta=\left[\begin{array}{c|cc}
		0&0  & \Delta_{C}\\
		\hline
		0& 0 &\Delta_{A1}\\
		\Delta_{B}& \Delta_{A2}& \Delta_{A3}
	\end{array}\right]\in\mathcal{M}_{p}
\end{equation*}
can be expressed as
$$
\begin{gathered}
D_{\Delta}^2 J\left(\widetilde{\mathcal{K}}(\lambda)\right)=2 \operatorname{tr}\left(2\left[ \begin{array}{cc}
		\Sigma_{\mathcal{K}} & 0\\
		0& 0
	\end{array}\right]P_{\mathcal{K}, \Delta}^{\prime}\left[\begin{array}{ccc}
		0 &  0 & B\Delta_{C} \\
		0& 0& \Delta_{A1}\\
		\Delta_{B}C & \Delta_{A2} & \Delta_{A3}
	\end{array}\right]+2{P}_{\mathcal{K}, \Delta}^{\prime}\left[\begin{array}{ccc}
		0&0 & 0 \\
		0&0 & {B}_{\mathcal{K}} \mathcal{V} \Delta_{B}^\top\\
		0&0&0
	\end{array}\right] \right).
\end{gathered}
$$
Here, $\mathcal{P}_{\mathcal{K}}$ and $\Sigma_{\mathcal{K}}$ are the solutions to the Lyapunov equations \eqref{eq_lyap2}, and $P_{\mathcal{K}, \Delta}^{\prime} \in \mathbb{R}^{(n+q+p) \times(n+q+p)}$ is the solution to the following Lyapunov equation
\begin{equation}\label{lyp}
	\left[\begin{array}{ccc}
		A & B C_{\mathcal{K}} & 0 \\
		B_{\mathcal{K}} C & A_{\mathcal{K}}& 0\\
		0& 0& \lambda I_{p}
	\end{array}\right]^{\top} P_{\mathcal{K}, \Delta}^{\prime}+P_{\mathcal{K}, \Delta}^{\prime}\left[\begin{array}{ccc}
		A & B C_{\mathcal{K}} & 0 \\
		B_{\mathcal{K}} C & A_{\mathcal{K}}& 0\\
		0& 0& \lambda I_{p}
	\end{array}\right]+M_1\left(\mathcal{P}_{\mathcal{K}}, \Delta\right)=0
\end{equation}
with
\begin{equation*}
	\begin{aligned}
		M_1\left(\mathcal{P}_{\mathcal{K}}, \Delta\right):=& \left[\begin{array}{ccc}
			0 &  0 & B\Delta_{C} \\
			0& 0& \Delta_{A1}\\
			\Delta_{B}C & \Delta_{A2} & \Delta_{A3}
		\end{array}\right]^{\top} \left[ \begin{array}{cc}
			\mathcal{P}_{\mathcal{K}} & 0\\
			0& 0
		\end{array}\right]+\left[ \begin{array}{cc}
			\mathcal{P}_{\mathcal{K}} & 0\\
			0& 0
		\end{array}\right]\left[\begin{array}{ccc}
			0 &  0 & B\Delta_{C} \\
			0& 0& \Delta_{A1}\\
			\Delta_{B}C & \Delta_{A2} & \Delta_{A3}
		\end{array}\right]\\
		&+\left[\begin{array}{ccc}
			0 & 0 & 0\\
			0& 0& {C}_{\mathcal{K}}^{\top} R \Delta_{C}\\
			0 & \Delta_{C}^\top R {C}_{\mathcal{K}}& 0
		\end{array}\right] .
	\end{aligned}
\end{equation*}
We now decompose the direction $\Delta$ into four parts:
\[
\Delta^{(1)}=\left[\begin{array}{c|cc} 0 & 0 & \Delta_{C} \\ \hline 0 & 0 & 0 \\ 0 & 0 & 0 \end{array}\right],\ 
\Delta^{(2)}=\left[\begin{array}{c|cc} 0 & 0 & 0 \\ \hline 0 & 0 & 0 \\ \Delta_{B} & 0 & 0 \end{array}\right],\ 
\Delta^{(3)}=\left[\begin{array}{c|cc} 0 & 0 & 0 \\ \hline 0 & 0 & 0 \\ 0 & 0 & \Delta_{A3} \end{array}\right],\ 
\Delta^{(4)}=\left[\begin{array}{c|cc} 0 & 0 & 0 \\ \hline 0 & 0 & \Delta_{A1} \\ 0 & \Delta_{A2} & 0 \end{array}\right].
\]
With these definitions, $D_{\Delta}^2 J\left(\widetilde{\mathcal{K}}(\lambda)\right)$ expands as
\begin{equation*}
	\begin{aligned}
		D_{\Delta}^2 J\left(\widetilde{\mathcal{K}}(\lambda)\right)=&D_{\Delta^{(1)}+\Delta^{(2)}+\Delta^{(3)}}^2 J\left(\widetilde{\mathcal{K}}(\lambda)\right)+2D_{\Delta^{(3)}}D_{\Delta^{(4)}}J\left(\widetilde{\mathcal{K}}(\lambda)\right)\\ &+2D_{\Delta^{(1)}+\Delta^{(2)}}D_{\Delta^{(4)}}J\left(\widetilde{\mathcal{K}}(\lambda)\right)+D_{\Delta^{(4)}}^2J\left(\widetilde{\mathcal{K}}(\lambda)\right).
	\end{aligned}
\end{equation*}
Define the transfer function matrices
\begin{equation*}
\begin{gathered}
		\mathbf{H}_{11}:=(B^\top P_1^\mathcal{K}+R\bar{C}_\mathcal{K})(sI-\mathcal{A})^{-1}((C\Sigma_1^\mathcal{K})^\top+\bar{B}_\mathcal{K}\mathcal{V}),\ \mathbf{H}_{12}:=(B^\top P_1^\mathcal{K}+R\bar{C}_\mathcal{K})(sI-\mathcal{A})^{-1}\Sigma_2^{\mathcal{K}\top},\\
		\mathbf{H}_{21}:=P_2^\mathcal{K}(sI-\mathcal{A})^{-1}((C\Sigma_1^\mathcal{K})^\top+\bar{B}_\mathcal{K}\mathcal{V}),\ \mathbf{H}_{22}:=P_2^\mathcal{K}(sI-\mathcal{A})^{-1}\Sigma_2^{\mathcal{K}\top}.
\end{gathered}
\end{equation*} 
Then, the optimality condition \eqref{value} is equivalent to
\begin{equation*}
	\begin{aligned}
		&		(\mathcal{C}_1-\mathcal{B}_1\mathcal{P}_\mathcal{K})(sI-\mathcal{A})^{-1}(\mathcal{B}_0-\Sigma_\mathcal{K}\mathcal{C}_0)
		=\!
		\begin{bmatrix}
			\mathbf{H}_{11} & \mathbf{H}_{12}\\
			\mathbf{H}_{21} & \mathbf{H}_{22}
		\end{bmatrix}\equiv0,\quad \forall s\in\mathbb{C}.
	\end{aligned}
\end{equation*}
To complete the proof, it remains to show the equivalence between the condition
\[
D_{\Delta}^2 J(\widetilde{\mathcal{K}}(\lambda)) \equiv 0, \quad \forall \lambda< 0, \Delta\in\mathcal{M}_{p}
\]
and the condition \eqref{value}, i.e.,
\[
\mathbf{H}_{11} \equiv \mathbf{H}_{12} \equiv \mathbf{H}_{21} \equiv \mathbf{H}_{22} \equiv 0, \quad \forall s \in \mathbb{C}.
\]
Theorem 2 of \cite{9993305} implies that the first term $D_{\Delta^{(1)}+\Delta^{(2)}+\Delta^{(3)}}^2 J\left(\widetilde{\mathcal{K}}(\lambda)\right)\equiv 0,\ \forall \lambda< 0,~\Delta\in\mathcal{M}_{p}$ if and only if $\mathbf{H}_{11}\equiv 0,\ \forall s\in\mathbb{C}$. We therefore only analyze the subsequent terms $D_{\Delta^{(4)}}^2J\left(\widetilde{\mathcal{K}}(\lambda)\right)$,$D_{\Delta^{(1)}+\Delta^{(2)}}D_{\Delta^{(4)}}J\left(\widetilde{\mathcal{K}}(\lambda)\right)$ and $D_{\Delta^{(3)}}D_{\Delta^{(4)}}J\left(\widetilde{\mathcal{K}}(\lambda)\right)$. First, for $D_{\Delta^{(4)}}^2J\left(\widetilde{\mathcal{K}}(\lambda)\right)$, it holds that
\begin{equation}\label{M1}
	\begin{aligned}
		M_1\left(\mathcal{P}_{\mathcal{K}}, \Delta^{(4)}\right) = \left[\begin{array}{ccc}
			0 &  0 & P^{\mathcal{K}}_{12}\Delta_{A1} \\
			0& 0& P^{\mathcal{K}}_{22}\Delta_{A1}\\
			\Delta_{A1}^\top( P^{\mathcal{K}}_{12})^\top & \Delta_{A1}^\top P^{\mathcal{K}}_{22}& 0
		\end{array}\right]. % A more complete form
	\end{aligned}
\end{equation}
Taking \eqref{M1} into $D_{\Delta^{(4)}}^2J\left(\widetilde{\mathcal{K}}(\lambda)\right)$ and using the Lyapunov equation \eqref{lyp}, we have
\begin{equation}\label{exp-hess1}
	\begin{aligned}
		&D_{\Delta^{(4)}}^2J\left(\widetilde{\mathcal{K}}(\lambda)\right)\\
		&=4\int_{0}^{\infty} \!\!\!\operatorname{tr}\left( \left[\begin{array}{cc}
			0 &  \exp(\mathcal{A}^\top t)(P^{\mathcal{K}}_{2})^\top\Delta_{A1} \exp(\lambda t) \\
			\Delta_{A1}^\top\exp(\lambda t)P^{\mathcal{K}}_{2} \exp(\mathcal{A}t) & 0
		\end{array}\right] \Delta^{(4)}\left[ \begin{array}{cc}
			\Sigma_{\mathcal{K}} & 0\\
			0& 0
		\end{array}\right]\right) dt\\
		&=4\int_{0}^{\infty} \operatorname{tr}\left(\left[\begin{array}{ccc}
			0& 0\\
			0& \Delta_{A1}^\top P^{\mathcal{K}}_{2} \exp((\lambda I+\mathcal{A})t)(\Sigma^{\mathcal{K}}_2)^\top\Delta_{A2}^\top
		\end{array}\right]\right) dt.
	\end{aligned}
\end{equation}
According to \eqref{exp-hess1}, it follows that  $D_{\Delta^{(4)}}^2J\left(\widetilde{\mathcal{K}}(\lambda)\right)\equiv0,\ \forall \lambda<0,~\Delta\in\mathcal{M}_{p}$ if and only if $\mathbf{H}_{22}\equiv 0,\ \forall s\in\mathbb{C}$. By similar arguments, for $D_{\Delta^{(1)}+\Delta^{(2)}}D_{\Delta^{(4)}}J\left(\widetilde{\mathcal{K}}(\lambda)\right)$, we have
\begin{equation}\label{exp-hess2}
	\begin{aligned}
		D_{\Delta^{(1)}+\Delta^{(2)}}D_{\Delta^{(4)}}J\left(\widetilde{\mathcal{K}}(\lambda)\right)
		=&4\int_{0}^{\infty} \operatorname{tr}\left(\left[\begin{array}{ccc}
			0& 0\\
			0& \Delta_{C}^\top(B^\top P^{\mathcal{K}}_1+R \bar{C}_\mathcal{K} )\exp((\lambda I+\mathcal{A})t)(\Sigma^{\mathcal{K}}_{2})^\top\Delta_{A2}^\top
		\end{array}\right]\right) dt\\
&+4\int_{0}^{\infty} \operatorname{tr}\left(\left[\begin{array}{ccc}
			0& 0\\
			0& \Delta_{A1}^\top P^{\mathcal{K}}_2\exp((\lambda I+\mathcal{A})t)(C\Sigma^{\mathcal{K}}_{1}+\mathcal{V} \bar{B}_{\mathcal{K}}^\top )^\top\Delta_{B}^\top
		\end{array}\right]\right) dt.
	\end{aligned}
\end{equation}
Equation \eqref{exp-hess2} implies that the condition $D_{\Delta^{(1)}+\Delta^{(2)}}D_{\Delta^{(4)}}J\left(\widetilde{\mathcal{K}}(\lambda)\right)\equiv0,\ \forall \lambda<0,~\Delta\in\mathcal{M}_{p}$  holds if and only if $\mathbf{H}_{12}\equiv\mathbf{H}_{21}\equiv 0,\ \forall s\in\mathbb{C}$. Furthermore, for $D_{\Delta^{(3)}}D_{\Delta^{(4)}}J\left(\widetilde{\mathcal{K}}(\lambda)\right)$, it holds that
\begin{align*}
		D_{\Delta^{(3)}}D_{\Delta^{(4)}}J\left(\widetilde{\mathcal{K}}(\lambda)\right)=&4 \operatorname{tr}\left(\left[ \begin{array}{cc}
			\Sigma_{\mathcal{K}} & 0\\
			0& 0
		\end{array}\right]P_{\mathcal{K}, \Delta^{(4)}}^{\prime}\left[\begin{array}{ccc}
			0 &  0 & 0 \\
			0& 0& 0\\
			0 & 0 & \Delta_{A3}
		\end{array}\right]\right)\\
		&+ 4\operatorname{tr}\left(\left[ \begin{array}{cc}
			\Sigma_{\mathcal{K}} & 0\\
			0& 0
		\end{array}\right]P_{\mathcal{K}, \Delta^{(3)}}^{\prime}\left[\begin{array}{ccc}
			0 &  0 & 0 \\
			0& 0& \Delta_{A1}\\
			0 &  \Delta_{A2} & 0
		\end{array}\right]\right)  \equiv0,\ \forall \lambda<0,\Delta\in\mathcal{M}_{p}.\label{hes34}
\end{align*}
%where the equation \eqref{hes34} holds by using $M_1\left(P_{\mathcal{K}_e}, \Delta^{(3)}\right)=0$.
In conclusion, $D_{\Delta}^2 J\left(\widetilde{\mathcal{K}}(\lambda)\right)\equiv 0$ for any $\lambda <0,~\Delta\in\mathcal{M}_{p}$ if and only if the condition \eqref{value} holds. Meanwhile, based on the specific forms of the derivatives, when \eqref{value2} does not hold, it is evident that there exist $\Delta\in\mathcal{M}_{p}$ and $\lambda<0$ such that $D_{\Delta}^2J(\widetilde{\mathcal{K}}(\lambda)) < 0$. This completes the proof.
\end{proof}

\bibliography{nkthesis}
\bibliographystyle{plain}

\end{document}